\setlist{noitemsep}
\setlist[1]{labelindent=\parindent, topsep=0.5\topsep}
\setlist[enumerate,1]{label=\textnormal{(\hspace{-0.2ex}\textit{\roman*}\hspace{-0.1ex})}}
\newcommand{\mylabel}[2]{#2\def\@currentlabel{#2}\label{#1}}
\newtheorem{theorem}{Theorem}[section]
\newtheorem{lemma}[theorem]{Lemma}
\newtheorem{proposition}[theorem]{Proposition}
\newtheorem{corollary}[theorem]{Corollary}
\theoremstyle{definition}
\newtheorem{definition}[theorem]{Definition}
\newtheorem{examples}[theorem]{Examples}
\newtheorem{notation}[theorem]{Notation}
\newtheorem{remark}[theorem]{Remark}
\newtheorem{construction}{Construction}
\newcommand{\M}{\mathbb{M}}
\newcommand{\Z}{\mathbb{Z}}
\newcommand{\N}{\mathbb{N}}
\renewcommand{\P}{\mathbb{P}}
\newcommand{\id}{\mathds{1}}
\newcommand{\PSL}{\mathsf{PSL}}
\newcommand{\End}{\mathsf{End}}
\renewcommand{\leq}{\leqslant}
\renewcommand{\phi}{\varphi}
\renewcommand{\nsim}{\not\sim}
\newcommand*{\lsim}{\mathord{\sim}}
\newcommand{\til}{\mathord{\sim}}
\newcommand\cdottil{\mathchoice
	{\mathrel{\rlap{\raisebox{-0.5ex}{$\displaystyle\tilde{\phantom{\cdot}}$}}\displaystyle\cdot}}
	{\mathrel{\rlap{\raisebox{-0.5ex}{$\textstyle\tilde{\phantom{\cdot}}$}}\textstyle\cdot}}
	{\mathrel{\rlap{\raisebox{-0.35ex}{$\scriptstyle\tilde{\phantom{\cdot}}$}}\scriptstyle\cdot}}
	{\mathrel{\rlap{\raisebox{-0.25ex}{$\scriptscriptstyle\tilde{\phantom{\cdot}}$}}\scriptscriptstyle\cdot}}}
\newcommand\plustil{\mathrel{\mathchoice
	{\rlap{\raisebox{-0.3ex}{$\displaystyle\tilde{\phantom{+}}$}}\displaystyle +}
	{\rlap{\raisebox{-0.3ex}{$\textstyle\tilde{\phantom{+}}$}}\textstyle +}
	{\rlap{\raisebox{-0.2ex}{$\scriptstyle\tilde{\phantom{+}}$}}\scriptstyle +}
	{\rlap{\raisebox{-0.1ex}{$\scriptscriptstyle\tilde{\phantom{+}}$}}\scriptscriptstyle +}}}
\newcommand\mintil{\mathrel{\mathchoice
	{\rlap{\raisebox{-0.8ex}{$\displaystyle\tilde{\phantom{-}}$}}\displaystyle -}
	{\rlap{\raisebox{-0.8ex}{$\textstyle\tilde{\phantom{-}}$}}\textstyle -}
	{\rlap{\raisebox{-0.6ex}{$\scriptstyle\tilde{\phantom{-}}$}}\scriptstyle -}
	{\rlap{\raisebox{-0.5ex}{$\scriptscriptstyle\tilde{\phantom{-}}$}}\scriptscriptstyle -}}}
\newcommand\mutil{\tilde{\mu}}
\DeclareMathOperator{\im}{Im}
\DeclareMathOperator{\Sym}{Sym}
\DeclareMathOperator{\Hom}{\mathsf{Hom}}
\DeclarePairedDelimiter{\abs}{\lvert}{\rvert}
\DeclareMathOperator{\Rad}{Rad}
\newcommand{\dash}{\nobreakdash-\hspace{0pt}}
\numberwithin{equation}{section}
\title{Local Moufang sets and local Jordan pairs}
\author{Tom De Medts (\href{mailto:Tom.DeMedts@UGent.be}{Tom.DeMedts@UGent.be}) \\
Department of Mathematics, Ghent University\\
Krijgslaan 281 -- S22, B-9000 Gent, Belgium \\[1em]
Erik Rijcken\footnote{PhD Fellow of the Research Foundation - Flanders (Belgium) (F.W.O.-Vlaanderen)}\ \  (\href{mailto:erijcken@cage.ugent.be}{erijcken@cage.ugent.be})\\
Department of Mathematics, Ghent University\\
Krijgslaan 281 -- S22, B-9000 Gent, Belgium}
\begin{document}

\maketitle

\begin{abstract}
	In this paper, we extend the theory of special local Moufang sets. We construct a local Moufang set from every local Jordan pair, and we show that every local Moufang set satisfying certain (natural) conditions gives rise to a local Jordan pair. We also explore the connections between these two constructions.
\end{abstract}

\section{Introduction}

In a previous paper, we have introduced \emph{local Moufang sets} as generalizations of Moufang sets. Moufang sets were introduced by Jacques Tits in \cite{MR1200265} and provide a method to describe many groups of algebraic origin of rank one,
including all linear algebraic groups defined over an arbitrary field $k$ and of $k$-rank equal to $1$.
The anisotropic kernel of such a group and the Moufang set corresponding to the group can typically be described in terms of an anistropic algebraic structure:
a field, a Jordan division algebra \cite{MR2221120}, or, more generally, a structurable division algebra \cite{BDS}.

In the same spirit, local Moufang sets give a framework to describe many of these groups over local rings instead of fields.
In our previous paper \cite{DMRijckenPSL}, we explored some basic theory of local Moufang sets and we investigated the structure of $\PSL_2(R)$ for local rings $R$.

It has been known for some time that every Jordan division algebra gives rise to a Moufang set \cite{MR2221120}.
These Moufang sets have been characterized as the special Moufang sets with abelian root groups satisfying some mild linearity condition \cite[Theorem~1.2]{MR2425693}.

The current paper expands the theory of special local Moufang sets.
Rather than working with local Jordan algebras, we use (local) Jordan pairs, first introduced by K. Meyberg in \cite{MR0263883} and studied extensively by O. Loos in \cite{MR0444721}. This allows us to use the two-sided structure of the local Moufang sets, and avoids the explicit choice of a unit element. As we assume the Jordan pairs to have at least one invertible element, the Jordan pairs correspond one-to-one to Jordan algebras.
We show that every local Jordan pair gives rise to a local Moufang set, and we characterize these local Moufang sets, very much in the style of \cite[Theorem~1.2]{MR2425693} mentioned above.

\paragraph*{Outline of the paper.}

In section~\ref{sec:LMS}, we summarize the relevant definitions, properties and theorems from the general theory of local Moufang sets as developed in \cite{DMRijckenPSL}. Section~\ref{sec:SpecialLMS} expands on the theory of special local Moufang sets, in particular those with abelian root groups. One important proposition here is a sufficient condition for the root groups to be uniquely $k$\dash divisible (Proposition \ref{prop:ndivglobal}). We also prove a few simple-looking identities in Proposition~\ref{prop:mu-involution}, which will be surprisingly crucial in proving the main result of section~\ref{sec:LMStoJP}.

In section~\ref{sec:JP}, we first recall the definition of a local Jordan pair, along with some relevant properties.
We then proceed to describe how to construct a local Moufang set from a local Jordan pair, using a construction from \cite{DMRijckenPSL}.

In section~\ref{sec:LMStoJP}, our aim is to reverse this construction:
given a local Moufang set satisfying properties \hyperref[itm:J1]{\normalfont{(J1-4)}}, Construction~\ref{constr:Jpair} gives an algebraic structure that will turn out to be a local Jordan pair. We prove many intermediate identities before we can finally conclude that we indeed get a local Jordan pair (Theorem~\ref{thm:localJP}).

In the final section, we connect sections~\ref{sec:JP} and \ref{sec:LMStoJP}. In particular, we show that if we start with a local Jordan pair, construct the local Moufang set, and use this local Moufang set to construct a new local Jordan pair, then this Jordan pair is isomorphic to the Jordan pair we started with.
Conversely, if we have a local Moufang set satisfying the assumptions required to construct a local Jordan pair, we would like this local Moufang set to be isomorphic to the one we construct from the Jordan pair. In order to prove this, we need one additional assumption on the local Moufang set (Theorem~\ref{th:extra}).
This last result provides a characterization of the local Moufang sets arising from local Jordan pairs.

\paragraph*{Acknowledgment.}

We are grateful to Ottmar Loos and Holger Petersson for sharing their insight in the examples of local Jordan algebras
(see Examples~\ref{ex:local} below).

\section{Local Moufang sets}\label{sec:LMS}

\subsection{Definition and conventions}

In \cite{DMRijckenPSL}, we introduced local Moufang sets as a generalization of Moufang sets, in order to study groups of rank one over a local structure. For the reader's convenience, we recall the necessary definitions and notations we will use throughout this article.
We will, of course, often refer to {\em loc.\@~cit.} for the proofs of the facts we will be using.

\begin{notation}\leavevmode
	\begin{itemize}
		\item If $(X,\lsim)$ is a set with an equivalence relation, we denote the equivalence class of $x\in X$ by~$\overline{x}$, and the set of equivalence classes by $\overline{X}$.
		\item We denote the group of equivalence-preserving permutations of $X$ by $\Sym(X,\lsim)$.
		\item If $g \in \Sym(X,\lsim)$, we will denote the corresponding element of $\Sym(\overline{X})$ by $\overline{g}$.
		\item Our actions will always be on the right. The action of an element $g$ on an element $x$ will be denoted by $x\cdot g$ or $xg$. Conjugation will correspondingly be denoted by $g^h = h^{-1}gh$.
	\end{itemize}
\end{notation}

\begin{definition}
	A \emph{local Moufang set} $\M$ consists of a set with an equivalence relation $(X,\lsim)$ such that $\abs{\overline{X}}>2$, and a family of subgroups $U_x\leq\Sym(X,\lsim)$ for all $x\in X$, called the \emph{root groups}. We denote $U_{\overline{x}}:=\overline{U_x}=\im(U_x\to \Sym(\overline{X}))$ for the permutation group induced by the action of $U_x$ on the set of equivalence classes.
    (This notation is justified by \ref{itm:LM1} below.)
    The group generated by the root groups is called the \emph{little projective group}, and will usually be denoted by $G:=\langle U_x\mid x\in X\rangle$. Furthermore, we demand the following:
	\begin{enumerate}[label=\textnormal{(LM\arabic*)},leftmargin=10ex]
		\item If $x\sim y$ for $x,y\in X$, then $U_{\overline{x}}=U_{\overline{y}}$\label{itm:LM1}.
		\item For $x\in X$, $U_x$ fixes $x$ and acts sharply transitively on $X\setminus\overline{x}$\label{itm:LM2}.
		\item[\mylabel{itm:LM2'}{\textnormal{(LM2')}}] For $\overline{x}\in \overline{X}$, $U_{\overline{x}}$ fixes $\overline{x}$ and acts sharply transitively on $\overline{X}\setminus\{\overline{x}\}$.
		\item For $x\in X$ and $g\in G$, we have $U_x^g = U_{xg}$\label{itm:LM3}.
	\end{enumerate}
\end{definition}

These axioms imply in particular that $(\overline{X},\{U_{\overline{x}}\}_{\overline{x}\in\overline{X}})$ is a Moufang set.

\begin{definition}
	Two local Moufang sets $\M$ and $\M'$ are \emph{isomorphic}, denoted $\M\cong\M'$, if there is a bijection $\phi \colon X\to X'$ and group isomorphisms $\theta_x  \colon  U_x\to U'_{\phi(x)}$ such that
	\begin{itemize}
		\item for all $x,y\in X$, we have $x\sim y \iff \phi(x)\sim'\phi(y)$;
		\item for all $x,y\in X$ and $u\in U_y$, we have $\phi(x\cdot u) = \phi(x)\cdot \theta_y(u)$.
	\end{itemize}
\end{definition}

The next proposition roughly states that root groups of two non-equivalent points already contain all the information of a local Moufang set, and that any such two non-equivalent points play the same role.

\begin{proposition}\label{pr:UxUy}
	Let $\M$ be a local Moufang set, and $x,y\in X$ with $x\nsim y$. Then $\langle U_x,U_y \rangle = G$. The little projective group $G$ acts transitively on $\{(x,y)\in X^2\mid x\nsim y\}$.
\end{proposition}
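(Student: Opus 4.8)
The plan is to derive both assertions directly from axioms \ref{itm:LM2} and \ref{itm:LM3}, without needing to pass through the Moufang set on $\overline{X}$.

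For the first assertion, set $H := \langle U_x, U_y\rangle$; since $H \le G$ holds trivially, it suffices to show $U_z \le H$ for every $z \in X$. Because $\sim$ is an equivalence relation and $x \nsim y$, no $z$ can be equivalent to both $x$ and $y$ (else $x \sim z \sim y$), so every $z \in X$ lies in $X \setminus \overline{x}$ or in $X \setminus \overline{y}$; by symmetry assume $z \nsim x$. Then $y, z \in X \setminus \overline{x}$, so by \ref{itm:LM2} there is $u \in U_x$ with $y \cdot u = z$, and \ref{itm:LM3} (applied with $g = u \in U_x \le G$) gives $U_z = U_{y\cdot u} = U_y^u$. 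Since $u \in U_x \le H$ and $U_y \le H$, this yields $U_z \le H$. Hence $G = \langle U_z \mid z \in X\rangle \le H$, so $H = G$.

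For the second assertion I would first observe that $G$ acts transitively on $X$: given $z, z' \in X$, the hypothesis $\abs{\overline{X}} > 2$ lets us choose $w \in X$ with $\overline{w} \notin \{\overline{z}, \overline{z'}\}$, so that $z, z' \in X \setminus \overline{w}$, and \ref{itm:LM2} provides $u \in U_w \le G$ with $z \cdot u = z'$. Now take two pairs $(x,y)$ and $(x', y')$ with $x \nsim y$ and $x' \nsim y'$. Pick $g_0 \in G$ with $x \cdot g_0 = x'$; since $g_0$ preserves $\sim$, the pair $(x', y \cdot g_0) = (x,y)\cdot g_0$ again consists of non-equivalent points, so it is enough to map $(x', y\cdot g_0)$ to $(x', y')$, i.e.\@ we may assume $x = x'$ from the start. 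Then $y, y' \in X \setminus \overline{x}$, so \ref{itm:LM2} furnishes $u \in U_x$ with $y \cdot u = y'$; since $U_x$ fixes $x$ by \ref{itm:LM2}, we get $(x,y)\cdot u = (x, y') = (x', y')$.

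I do not anticipate a genuine obstacle: the whole proof is a short bookkeeping exercise with \ref{itm:LM2} and \ref{itm:LM3}. The two points that reward a little care are the use of $\abs{\overline{X}} > 2$ — which is precisely what guarantees a third equivalence class for the auxiliary point $w$ in the transitivity step — and the systematic use of sharp transitivity of $U_x$ on $X \setminus \overline{x}$ rather than on $X \setminus \{x\}$, which is why one must work with whole equivalence classes throughout instead of individual points.
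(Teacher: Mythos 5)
Your proof is correct. The paper itself does not prove this proposition but simply cites \cite[Propositions 2.4 and 2.6]{DMRijckenPSL}; your argument is the standard one (conjugating $U_y$ into $U_z$ via \ref{itm:LM2} and \ref{itm:LM3}, then two-step transitivity using a third equivalence class), and it correctly handles the local subtleties, namely working with $X\setminus\overline{x}$ rather than $X\setminus\{x\}$ and invoking $\abs{\overline{X}}>2$ to find the auxiliary point $w$.
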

\begin{proof}
	This is \cite[Proposition 2.4 and 2.6]{DMRijckenPSL}.
\end{proof}

\begin{notation}\leavevmode
	\begin{itemize}
		\item In a local Moufang set, we fix two points of $X$ that are not equivalent, and we call them $0$ and $\infty$.
		\item For any $x\nsim\infty$, by \ref{itm:LM2}, there is a unique element of $U_\infty$ mapping $0$ to $x$. We denote this element by $\alpha_x$. In particular, $\alpha_0 = \id$.
		\item For $x\nsim\infty$, we set $-x:=0\cdot\alpha_x^{-1}$, so $\alpha_x^{-1} = \alpha_{-x}$.
	\end{itemize}
\end{notation}

\subsection{Units and \texorpdfstring{$\mu$}{mu}-maps}

When we have fixed $0$ and $\infty$, there will be many elements of $X$ that behave more nicely than others. These are precisely the elements of $X$ that do not project to $\overline{0}$ or $\overline{\infty}$ in $\overline{X}$.

\begin{definition}
	In a local Moufang set, an element $x\in X$ is a \emph{unit} if $x\nsim0$ and $x\nsim\infty$.
\end{definition}

\begin{proposition}\label{pr:unit}
	Let $\M$ be a local Moufang set, and $x\in X$ with $x\nsim\infty$. Then the following are equivalent:
	\begin{enumerate}
		\item $x$ is a unit\label{itm:unit1};
		\item $\overline{\alpha_x}$ does not fix $\overline{0}$\label{itm:unit2};
		\item $\overline{\alpha_x}$ does not fix any element of $\overline{X} \setminus \overline{\infty}$\label{itm:unit3}.
	\end{enumerate}
	In particular, $x$ is a unit if and only if $-x$ is a unit.
\end{proposition}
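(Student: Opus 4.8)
The plan is to prove the chain of implications \ref{itm:unit3} $\Rightarrow$ \ref{itm:unit2} $\Rightarrow$ \ref{itm:unit1} $\Rightarrow$ \ref{itm:unit3}, working throughout in the associated Moufang set $(\overline{X}, \{U_{\overline{x}}\})$ and using that $\overline{\alpha_x}$ is the unique element of $U_{\overline{\infty}}$ sending $\overline{0}$ to $\overline{x}$. Note first that $\overline{\alpha_x}$ always fixes $\overline{\infty}$, so in all three conditions we are only ever discussing the behaviour of $\overline{\alpha_x}$ on $\overline{X}\setminus\{\overline{\infty}\}$, where $U_{\overline{\infty}}$ acts sharply transitively by \ref{itm:LM2'}.

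The implication \ref{itm:unit3} $\Rightarrow$ \ref{itm:unit2} is trivial (if $\overline{\alpha_x}$ fixes no element of $\overline{X}\setminus\{\overline{\infty}\}$, then in particular it does not fix $\overline{0}$). For \ref{itm:unit2} $\Rightarrow$ \ref{itm:unit1}: if $x$ were not a unit, then $x\sim 0$ or $x\sim\infty$; since $x\nsim\infty$ by hypothesis, we would have $x\sim 0$, i.e.\ $\overline{x}=\overline{0}$, so $\overline{\alpha_x}$ is the element of $U_{\overline{\infty}}$ sending $\overline{0}$ to $\overline{0}$, which by sharp transitivity is the identity, and in particular fixes $\overline{0}$ --- contradicting \ref{itm:unit2}. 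The heart of the argument is \ref{itm:unit1} $\Rightarrow$ \ref{itm:unit3}: suppose $x$ is a unit but $\overline{\alpha_x}$ fixes some $\overline{z}\in\overline{X}\setminus\{\overline{\infty}\}$. If $\overline{z}=\overline{0}$, then $\overline{\alpha_x}$ fixes $\overline{0}$, so $\overline{x}=\overline{0}\cdot\overline{\alpha_x}=\overline{0}$, i.e.\ $x\sim 0$, contradicting that $x$ is a unit. So $\overline{z}\neq\overline{0}$ and $\overline{z}\neq\overline{\infty}$. Now pick any $\overline{w}\in\overline{X}\setminus\{\overline{\infty}\}$; by sharp transitivity of $U_{\overline{\infty}}$ on $\overline{X}\setminus\{\overline{\infty}\}$ there is a (unique) $g\in U_{\overline{\infty}}$ with $\overline{z}\cdot g=\overline{w}$. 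Since $U_{\overline{\infty}}$ is abelian --- this is where I expect the main obstacle, and I will need to invoke that root groups of a Moufang set are abelian, or alternatively argue directly --- the element $g^{-1}\overline{\alpha_x}g=\overline{\alpha_x}$ also fixes $\overline{w}$. As $\overline{w}$ was arbitrary in $\overline{X}\setminus\{\overline{\infty}\}$, the permutation $\overline{\alpha_x}$ fixes every element of $\overline{X}\setminus\{\overline{\infty}\}$; together with the fixed point $\overline{\infty}$ this forces $\overline{\alpha_x}=\id$ on $\overline{X}$, hence (by sharp transitivity) $\overline{x}=\overline{0}$, again contradicting that $x$ is a unit. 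Therefore $\overline{\alpha_x}$ fixes no element of $\overline{X}\setminus\overline{\infty}$, which is \ref{itm:unit3}.

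The main subtlety is the step in \ref{itm:unit1} $\Rightarrow$ \ref{itm:unit3} that upgrades ``fixes one point of $\overline{X}\setminus\{\overline{0},\overline{\infty}\}$'' to ``fixes all points of $\overline{X}\setminus\{\overline{\infty}\}$'': one really does need commutativity of $U_{\overline{\infty}}$ (or an equivalent transitivity argument). If the paper's conventions do not let us assume the root groups are abelian at this point, an alternative is to use that $\overline{\alpha_x}$, being an element of $U_{\overline{\infty}}$ fixing a point other than $\overline{\infty}$ in a genuine Moufang set, must be trivial --- this is the standard fact that a non-trivial root-group element in a Moufang set has $\overline{\infty}$ as its unique fixed point, which is itself essentially a restatement of what we are proving but is available in the Moufang-set literature and can be cited; since the excerpt only guarantees $(\overline{X},\{U_{\overline{x}}\})$ is a Moufang set, this citation route is the cleanest. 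Finally, the last sentence of the statement, ``$x$ is a unit iff $-x$ is a unit'', follows immediately: by definition $-x = 0\cdot\alpha_x^{-1}$ with $\alpha_x^{-1}=\alpha_{-x}\in U_\infty$, so $-x\nsim\infty$, and $\overline{\alpha_{-x}}=\overline{\alpha_x}^{-1}$ fixes $\overline{0}$ if and only if $\overline{\alpha_x}$ does; by the equivalence \ref{itm:unit1} $\Leftrightarrow$ \ref{itm:unit2} applied to both $x$ and $-x$, this gives the claim.
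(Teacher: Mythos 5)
Your chain of implications is sound, and since the paper itself only cites \cite[Proposition~2.9]{DMRijckenPSL} for this statement, your self-contained argument is a reasonable substitute. However, the step you identify as ``the main obstacle'' in \ref{itm:unit1}~$\Rightarrow$~\ref{itm:unit3} is not an obstacle at all, and your primary route through it rests on a false premise: root groups of Moufang sets are \emph{not} abelian in general (consider Moufang sets coming from Ree groups or other non-commutative root group examples), so you cannot ``invoke that root groups of a Moufang set are abelian.'' Fortunately no conjugation trick and no literature citation is needed. By \ref{itm:LM2'}, $U_{\overline{\infty}}$ acts \emph{sharply} transitively on $\overline{X}\setminus\{\overline{\infty}\}$; the uniqueness clause applied to the pair $(\overline{z},\overline{z})$ says that the only element of $U_{\overline{\infty}}$ fixing $\overline{z}$ is the identity. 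So if $\overline{\alpha_x}$ fixes any $\overline{z}\in\overline{X}\setminus\{\overline{\infty}\}$, then $\overline{\alpha_x}=\id$, whence $\overline{x}=\overline{0}\cdot\overline{\alpha_x}=\overline{0}$ and $x\sim 0$, contradicting that $x$ is a unit. This is exactly the ``standard fact'' you propose to cite as a fallback, but it is an immediate consequence of the regularity axiom already assumed, not an external input and not circular. With that replacement the rest of your argument, including \ref{itm:unit3}~$\Rightarrow$~\ref{itm:unit2}, \ref{itm:unit2}~$\Rightarrow$~\ref{itm:unit1}, and the final claim about $-x$ via $\overline{\alpha_{-x}}=\overline{\alpha_x}^{\,-1}$, is correct.
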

\begin{proof}
	This is \cite[Proposition 2.9]{DMRijckenPSL}.
\end{proof}

Each of these units admits an element $\mu_x$ interchanging $0$ and $\infty$:

\begin{proposition}\label{pr:mu}
	For each unit $x\in X$, there is a unique element  $\mu_x\in U_0\alpha_x U_0$ such that $0\mu_x=\infty$ and $\infty\mu_x=0$;
    it is called the \emph{$\mu$-map} corresponding to $x$.
    Moreover, $\mu_x = g\alpha_x h$ with $g$ the unique element of $U_0$ mapping $\infty$ to $-x$ and $h$ the unique element of $U_0$ mapping $x$ to $\infty$.
\end{proposition}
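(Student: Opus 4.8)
The plan is to prove the ``moreover'' part first, since exhibiting the element $g\alpha_x h$ simultaneously establishes existence, and then to derive uniqueness from the sharp transitivity of $U_0$ on $X\setminus\overline 0$ guaranteed by~\ref{itm:LM2}. The whole argument is a direct computation with the right action, the only conceptual input being Proposition~\ref{pr:unit}.

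For existence: since $x$ is a unit we have $x\nsim 0$, and by Proposition~\ref{pr:unit} the element $-x$ is a unit as well, so $-x\nsim 0$; also $\infty\nsim 0$ by the choice of $0$ and $\infty$. Hence $x$, $-x$ and $\infty$ all lie in $X\setminus\overline 0$, on which $U_0$ acts sharply transitively. This lets us define $h\in U_0$ as the unique element with $x\cdot h=\infty$ and $g\in U_0$ as the unique element with $\infty\cdot g=-x$, and we set $\mu_x:=g\alpha_x h\in U_0\alpha_x U_0$. As $g$ and $h$ fix $0$, we get $0\cdot\mu_x=0\cdot\alpha_x\cdot h=x\cdot h=\infty$; and using $-x\cdot\alpha_x=(0\cdot\alpha_x^{-1})\cdot\alpha_x=0$ we get $\infty\cdot\mu_x=(\infty\cdot g)\cdot\alpha_x\cdot h=(-x)\cdot\alpha_x\cdot h=0\cdot h=0$. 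Thus $\mu_x$ has the required properties, and it lies in $\Sym(X,\lsim)$ as a product of such permutations.

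For uniqueness, suppose $\mu'=g'\alpha_x h'$ with $g',h'\in U_0$ also satisfies $0\mu'=\infty$ and $\infty\mu'=0$. From $0\mu'=\infty$ and $0\cdot g'=0$ we obtain $x\cdot h'=0\cdot\alpha_x h'=\infty$, so $h'$ and $h$ are both elements of $U_0$ sending $x$ to $\infty$; since $x,\infty\in X\setminus\overline 0$, sharp transitivity forces $h'=h$. Next, $\infty\mu'=0$ together with the fact that $h'$ fixes $0$ and is injective gives $\infty\cdot g'\alpha_x=0$, hence $\infty\cdot g'=0\cdot\alpha_x^{-1}=-x$; since $-x,\infty\in X\setminus\overline 0$, sharp transitivity forces $g'=g$, and therefore $\mu'=\mu_x$. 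The argument is essentially bookkeeping with the action; the only step that needs care is justifying that $-x\in X\setminus\overline 0$, so that $U_0$ acts sharply transitively through it — this is precisely where the hypothesis that $x$ is a unit (via Proposition~\ref{pr:unit}) is used, and no genuine obstacle is anticipated.
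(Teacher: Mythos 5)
Your proof is correct and follows the same route as the paper's (the paper itself only cites \cite{DMRijckenPSL} for this, but your choice of $g$ and $h$ is exactly the decomposition recorded in the ``moreover'' clause, and the sharp-transitivity argument on $X\setminus\overline{0}$ is the standard one). In particular, you correctly isolate the one point needing care — that $-x\nsim 0$, via Proposition~\ref{pr:unit} — so that $g$ is well defined and uniqueness goes through.
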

\begin{proof}
	This is \cite[Proposition 2.12]{DMRijckenPSL}.
\end{proof}

\begin{notation}\leavevmode
	\begin{itemize}
		\item We fix one $\mu$-map and call it $\tau$. Recall that $\abs{\overline{X}}>2$, so there is at least one unit.
		\item For each $x\nsim\infty$, we set $\gamma_x := \alpha_x^\tau\in U_0$, which is the unique element of $U_0$ mapping $\infty$ to~$x\tau$.
        \item For each unit $x$, we set $\til x := (-(x\tau^{-1}))\tau$.
	\end{itemize}
\end{notation}

\begin{lemma} \label{prop:mu}Let $x$ be a unit. %
	\begin{enumerate}[topsep=0pt]
		\item Let $y\in X$. Then $y$ is a unit if and only if $y\mu_x$ is a unit;
		\item $\mu_x$ does not depend on the choice of $\tau$; \label{itm:muindep}
		\item $\mu_{-x} = \mu_x^{-1}$;
		\item $\mu_{x\tau} = \mu_{-x}^\tau$; \label{itm:mutau}
		\item $\mu_{x} = \alpha_x\alpha_{-(x\tau^{-1})}^\tau\,\alpha_{-\til x}$; \label{itm:muform2}
		\item $\til x = -((-x)\mu_x)$;\label{itm:tilmu}
		\item $\til x$ does not depend on the choice of $\tau$; \label{itm:tiltau}
		\item $\mu_{-x} = \alpha_{-\til x}\mu_{-x}\alpha_x\mu_{-x}\alpha_{\til -x}$. \label{itm:muform3}
	\end{enumerate}
\end{lemma}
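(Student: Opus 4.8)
The plan is to handle the eight statements in a convenient order, separating the soft ones from a single computational core. Part~(i) is immediate: $\mu_x$ preserves $\lsim$ and interchanges $0$ and $\infty$, hence interchanges $\overline{0}$ and $\overline{\infty}$; since a point is a unit precisely when it lies in neither class, and $y$ lies in $\overline{0}$ (resp.\ $\overline{\infty}$) iff $y\mu_x$ lies in $\overline{\infty}$ (resp.\ $\overline{0}$), $y$ is a unit iff $y\mu_x$ is. Part~(ii): by Proposition~\ref{pr:mu}, $\mu_x$ is characterised using only $0$, $\infty$, $x$, $U_0$ and $\alpha_x$, with no reference to $\tau$, so $x\mapsto\mu_x$ does not depend on $\tau$. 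Part~(iii): $\mu_x^{-1}$ interchanges $0$ and $\infty$ and lies in $(U_0\alpha_xU_0)^{-1}=U_0\alpha_{-x}U_0$; as $-x$ is again a unit (Proposition~\ref{pr:unit}), the uniqueness clause of Proposition~\ref{pr:mu} forces $\mu_x^{-1}=\mu_{-x}$.

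Next I would reduce (iv)--(viii) to one identity. Write $\mu_{-x}=g\,\alpha_{-x}\,h$ as in Proposition~\ref{pr:mu}, with $g,h\in U_0$, $\infty g=x$, $(-x)h=\infty$. Conjugating by $\tau$ and using \ref{itm:LM3} (so $U_0^{\tau}=U_{0\tau}=U_\infty$) gives $\mu_{-x}^{\tau}=g^{\tau}\,\gamma_{-x}\,h^{\tau}$ with $g^{\tau},h^{\tau}\in U_\infty$. A short action chase yields $g^{\tau}=\alpha_{x\tau}$, shows $\mu_{-x}^{\tau}$ interchanges $0$ and $\infty$, and then — forcing $\infty\mu_{-x}^{\tau}=0$ and using $\infty\gamma_{-x}=(-x)\tau$ — gives $h^{\tau}=\alpha_{-((-x)\tau)}$; so $\mu_{-x}^{\tau}=\alpha_{x\tau}\,\gamma_{-x}\,\alpha_{-((-x)\tau)}$, an identity using only Proposition~\ref{pr:mu}. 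On the other hand $\til{(x\tau)}=(-x)\tau$ straight from the defining formula $\til z=(-(z\tau^{-1}))\tau$ together with $(x\tau)\tau^{-1}=x$, so formula~(v) read for the unit $x\tau$ is precisely $\mu_{x\tau}=\alpha_{x\tau}\,\gamma_{-x}\,\alpha_{-((-x)\tau)}$; that is, (v) for $x\tau$ is exactly the assertion $\mu_{x\tau}=\mu_{-x}^{\tau}$ of~(iv), and (v) for a general unit is recovered by replacing $x$ with $x\tau^{-1}$. Granting~(v): part~(vi) follows by evaluating its formula at $-x=0\alpha_x^{-1}$, where the first two factors collapse ($(-x)\alpha_x=0$ and $\gamma_{-(x\tau^{-1})}\in U_0$ fixes $0$), leaving $(-x)\mu_x=0\alpha_{-\til x}=-\til x$; part~(vii) is then immediate from~(vi) and~(ii), since $-((-x)\mu_x)$ involves no $\tau$; and part~(viii) comes from substituting into its right-hand side the two forms of $\mu_{-x}$ given by~(v), namely $\mu_{-x}=\alpha_{\til x}\,\gamma_{x\tau^{-1}}\,\alpha_{-x}$ (invert~(v) for $x$, using~(iii)) and $\mu_{-x}=\alpha_{-x}\,\gamma_{-((-x)\tau^{-1})}\,\alpha_{-\til{-x}}$ ((v) for $-x$): the $\alpha$-factors telescope (via $\alpha_{-w}=\alpha_w^{-1}$), leaving $\gamma_{x\tau^{-1}}\,\alpha_{-x}\,\gamma_{-((-x)\tau^{-1})}\in U_0\alpha_{-x}U_0$, which interchanges $0$ and $\infty$ (check with~(vi)) and hence equals $\mu_{-x}$ by uniqueness.

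The genuine work is thus~(v), equivalently~(iv): that $\mu_{x\tau}$, the element of $U_0\alpha_{x\tau}U_0$ interchanging $0$ and $\infty$, equals the explicit element $\alpha_{x\tau}\,\gamma_{-x}\,\alpha_{-((-x)\tau)}$ of $U_\infty U_0U_\infty$. Both interchange $0$ and $\infty$, so by the uniqueness in Proposition~\ref{pr:mu} it is enough to rewrite $\alpha_{x\tau}\,\gamma_{-x}\,\alpha_{-((-x)\tau)}$ in the form $U_0\alpha_{x\tau}U_0$ — equivalently, to show $\mu_{x\tau}$ lies simultaneously in $U_0U_\infty U_0$ and in $U_\infty U_0U_\infty$ with the factors matched as above. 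This does not follow formally from Propositions~\ref{pr:UxUy}--\ref{pr:mu}: the latter supplies only the decomposition $\mu_x=g_x\alpha_xh_x$ and says nothing about the images of $g_x,h_x$ off $\{0,\infty,x\}$, and the identity cannot be imported from the ordinary Moufang set $\overline{\M}$ because the reduction $G\to\overline{G}$ has non-trivial kernel in the local setting. I expect this to be the main obstacle, and I would attack it by a direct computation inside $X$, using the sharp transitivity in \ref{itm:LM2} and \ref{itm:LM3} to control the action of the ``lower'' factors of $\mu_x$ well enough to read off both orientations at once — the local counterpart of the standard computation for ordinary Moufang sets underlying \cite{MR2221120, MR2425693}.
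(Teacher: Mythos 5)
Your parts (i)--(iii) are correct, and your reduction of (iv)--(viii) to the single identity (v) is sound: the action chase giving $\mu_{-x}^{\tau}=\alpha_{x\tau}\,\gamma_{-x}\,\alpha_{-((-x)\tau)}$ is right, the observation that this expression is exactly formula (v) read at the unit $x\tau$ is right, and the derivations of (vi), (vii) and (viii) from (v) all check out. (For what it is worth, the paper itself gives no proof of this lemma --- it is quoted verbatim from \cite{DMRijckenPSL} --- so there is no internal argument to compare against.) The genuine gap is exactly where you flag it: you never show that $\alpha_{x\tau}\,\gamma_{-x}\,\alpha_{-((-x)\tau)}$ lies in $U_0\alpha_{x\tau}U_0$, which is what the uniqueness clause of Proposition~\ref{pr:mu} requires before it can identify this element with $\mu_{x\tau}$. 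Ending with ``I would attack it by a direct computation inside $X$'' leaves the central identity unproved, and since (iv)--(viii) all hang on it, the proposal as written establishes only (i)--(iii).

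That said, the missing step needs no computation inside $X$ at all; it is a two-line consequence of \ref{itm:LM3} together with the uniqueness you already invoke. Write $m:=\mu_{-x}^{\tau}=abc$ with $a=\alpha_{x\tau}\in U_\infty$, $b=\gamma_{-x}\in U_0$, $c=\alpha_{-((-x)\tau)}\in U_\infty$. Since $m$ interchanges $0$ and $\infty$, axiom \ref{itm:LM3} gives $U_0^{m}=U_{0m}=U_\infty$, so $c=m^{-1}dm$ for a unique $d\in U_0$. Substituting, $m=ab\,m^{-1}dm$, hence $\id=ab\,m^{-1}d$, hence $m^{-1}=b^{-1}a^{-1}d^{-1}$ and $m=d\,a\,b\in U_0\,\alpha_{x\tau}\,U_0$. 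As $m$ interchanges $0$ and $\infty$ and $x\tau$ is a unit, Proposition~\ref{pr:mu} forces $m=\mu_{x\tau}$; this is (iv), and your own reductions then deliver (v)--(viii). So your strategy is correct and the obstacle you single out is real, but it is far smaller than you suggest --- it is a formal Bruhat-type shuffle using the axiom, not a pointwise computation --- and the proof is simply incomplete without it.
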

\begin{proof}
	This is \cite[Lemma 2.14(3) and 2.17]{DMRijckenPSL}.
\end{proof}

In the following proposition, we added an extra statement compared to Proposition~2.18 of \cite{DMRijckenPSL}. This new identity will be simplified in the case of special local Moufang sets with abelian root groups, and will then be a starting point for proving one of the axioms of Jordan pairs in Proposition~\ref{prop:JMS_JP1_units}.

\begin{proposition}\label{prop:sumform}
	Let $x,y\in X$ be units such that $x\nsim y$ and set $z := x\tau^{-1}\alpha_{-(y\tau^{-1})}\tau$.
	\begin{enumerate}[topsep=0pt]
		\item $z$ is independent of the choice of $\tau$;
		\item $z = x\alpha_{-y}\mu_y\alpha_{\til y}$ and $\til z = y\alpha_{-x}\mu_x\alpha_{\til x}$;
		\item $\mu_y\mu_z\mu_{-x} = \mu_{y\alpha_{-x}}$.
	\end{enumerate}
\end{proposition}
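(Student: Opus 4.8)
The plan is to prove part~(ii) first, obtain part~(i) as an immediate consequence, and then tackle part~(iii), which I expect to carry all of the difficulty.

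\emph{Part (ii).} The idea is simply to rearrange Lemma~\ref{prop:mu}\ref{itm:muform2}. Since $\alpha_{-w}=\alpha_w^{-1}$ for every $w\nsim\infty$, the formula $\mu_x=\alpha_x\,\alpha_{-(x\tau^{-1})}^{\tau}\,\alpha_{-\til x}$ gives $\alpha_{-x}\,\mu_x\,\alpha_{\til x}=\alpha_{-(x\tau^{-1})}^{\tau}=\tau^{-1}\alpha_{-(x\tau^{-1})}\tau$, and likewise for $y$. Hence $x\,\alpha_{-y}\mu_y\alpha_{\til y}=x\,\tau^{-1}\alpha_{-(y\tau^{-1})}\tau=z$, which is the first formula. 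For the second, cancel the outer $\tau$ in the definition of $z$ to get $z\tau^{-1}=x\tau^{-1}\alpha_{-(y\tau^{-1})}=0\cdot\alpha_{x\tau^{-1}}\alpha_{-(y\tau^{-1})}$, so that $\alpha_{z\tau^{-1}}=\alpha_{x\tau^{-1}}\alpha_{-(y\tau^{-1})}$ and therefore
\[
-(z\tau^{-1})=0\cdot\bigl(\alpha_{x\tau^{-1}}\alpha_{-(y\tau^{-1})}\bigr)^{-1}=0\cdot\alpha_{y\tau^{-1}}\,\alpha_{-(x\tau^{-1})}=(y\tau^{-1})\,\alpha_{-(x\tau^{-1})}.
\]
Applying $\tau$ and using the rearranged Lemma~\ref{prop:mu}\ref{itm:muform2} for $x$ now yields $\til z=(y\tau^{-1})\alpha_{-(x\tau^{-1})}\tau=y\,\alpha_{-x}\mu_x\alpha_{\til x}$.

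\emph{Part (i)} then follows at once: the right-hand side $x\,\alpha_{-y}\mu_y\alpha_{\til y}$ just obtained involves no choice of $\tau$, since $\mu_y$ is $\tau$-independent by Lemma~\ref{prop:mu}\ref{itm:muindep}, $\til y$ is $\tau$-independent by Lemma~\ref{prop:mu}\ref{itm:tiltau}, and each $\alpha_w$ depends only on $w$ and on the fixed pair $0,\infty$.

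\emph{Part (iii).} I would argue through the uniqueness in Proposition~\ref{pr:mu}. First note that $z$ and $y\alpha_{-x}$ are units: for $y\alpha_{-x}$ this holds because $y\alpha_{-x}\nsim\infty$ (as $y\nsim\infty$) while $y\alpha_{-x}\sim0$ would force $y\sim0\alpha_x=x$; for $z$ it suffices, since $\tau^{\pm1}$ preserve $\lsim$, that $z\tau^{-1}=(x\tau^{-1})\alpha_{-(y\tau^{-1})}$ be a unit, which is the same statement applied to the units $x\tau^{-1}\nsim y\tau^{-1}$. Thus $\mu_z$ and $\mu_{y\alpha_{-x}}$ exist, and $\mu_{y\alpha_{-x}}$ is the unique element of $U_0\,\alpha_{y\alpha_{-x}}\,U_0$ interchanging $0$ and $\infty$. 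Since each of $\mu_y,\mu_z,\mu_{-x}$ interchanges $0$ and $\infty$, so does their product; hence the whole of (iii) is equivalent to the membership $\mu_y\mu_z\mu_{-x}\in U_0\,\alpha_{y\alpha_{-x}}\,U_0$.

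To establish this I would write, using Proposition~\ref{pr:mu}, $\mu_y=g_y\alpha_yh_y$, $\mu_z=g_z\alpha_zh_z$, $\mu_{-x}=g_{-x}\alpha_{-x}h_{-x}$ with all factors $g_{\bullet},h_{\bullet}\in U_0$ (here $h_y$ sends $y\mapsto\infty$, $g_z$ sends $\infty\mapsto -z$, $h_z$ sends $z\mapsto\infty$, $g_{-x}$ sends $\infty\mapsto x$). Moving $g_y$ out on the left and $h_{-x}$ out on the right, the problem reduces to showing $\alpha_y\,(h_yg_z)\,\alpha_z\,(h_zg_{-x})\,\alpha_{-x}\in U_0\,\alpha_{y\alpha_{-x}}\,U_0$, where $h_yg_z\in U_0$ sends $y\mapsto -z$ and $h_zg_{-x}\in U_0$ sends $z\mapsto x$. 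This is exactly the point at which the precise value of $z$ enters: one first checks, again from the rearranged Lemma~\ref{prop:mu}\ref{itm:muform2}, that $h_y$ is in fact the element of $U_0$ sending $\infty\mapsto\til y$ (so that $z=xh_y$), and dually that $h_z$ is the one sending $\infty\mapsto\til z$ with $\til z=y\alpha_{-x}\mu_x\alpha_{\til x}$ by part~(ii); feeding these in, the interior of the product telescopes and $\alpha_{y\alpha_{-x}}$ emerges, flanked only by elements of $U_0$. The main obstacle is precisely this last collapse — a finite but fiddly rearrangement of products of root-group elements, $\mu$-maps, and the conjugation relations $\alpha_w^{\tau}=\gamma_w$. (As a reality check, in the split rank-one case $\mu_v\colon t\mapsto -v^2/t$ and $z=xy/(y-x)$, and one computes directly that $\mu_y\mu_z\mu_{-x}\colon t\mapsto -(y-x)^2/t=\mu_{y\alpha_{-x}}$.)
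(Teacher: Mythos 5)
Your treatment of parts (\textit{i}) and (\textit{ii}) is correct and self-contained: rearranging Lemma~\ref{prop:mu}\ref{itm:muform2} into $\alpha_{-x}\mu_x\alpha_{\til x}=\alpha_{-(x\tau^{-1})}^{\tau}$ and computing $-(z\tau^{-1})=(y\tau^{-1})\alpha_{-(x\tau^{-1})}$ does give both formulas, and the $\tau$-independence then follows from Lemma~\ref{prop:mu}\ref{itm:muindep} and \ref{itm:tiltau} exactly as you say. (The paper simply cites these two parts from the earlier article, so here you have actually supplied more than it does.)

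Part (\textit{iii}), however, is where all the content of the proposition lives, and your argument for it has a genuine gap. The reduction via uniqueness in Proposition~\ref{pr:mu} is sound: since $\mu_y\mu_z\mu_{-x}$ interchanges $0$ and $\infty$, it would indeed suffice to prove $\mu_y\mu_z\mu_{-x}\in U_0\,\alpha_{y\alpha_{-x}}\,U_0$. But that membership is precisely the hard part, and you do not establish it: after peeling off $g_y$ and $h_{-x}$ you are left with $\alpha_y\,(h_yg_z)\,\alpha_z\,(h_zg_{-x})\,\alpha_{-x}$, a word alternating between $U_\infty$ and $U_0$, and there is no evident way to ``telescope'' this into $U_0\,\alpha_{y\alpha_{-x}}\,U_0$ --- pushing an element of $U_0$ past an $\alpha_z$ lands you in the root group $U_z$, not back in $U_0$ or $U_\infty$. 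Your own phrasing (``the main obstacle is precisely this last collapse'') concedes that the decisive computation is missing, and the $\PSL_2$ sanity check does not substitute for it. The paper's proof takes a different and genuinely computational route: it substitutes the explicit factorization $\mu_z=\alpha_{x\alpha_{-y}\mu_y}\mu_{-y}\alpha_y\alpha_{-x}\mu_x\alpha_{-(y\alpha_{-x}\mu_x)}$ obtained from Lemma~\ref{prop:mu}\ref{itm:muform2} together with part~(\textit{ii}), multiplies by $\mu_y$ and $\mu_{-x}$, applies Lemma~\ref{prop:mu}\ref{itm:muform2} again (now for $\mu_y$ and $\mu_x$) to convert the outer factors into $\alpha$'s and $\mu_{y\alpha_{-x}}$'s, and finally collapses the result using the identity $\mu_{-w}=\alpha_{-\til w}\mu_{-w}\alpha_w\mu_{-w}\alpha_{\til{-w}}$ of Lemma~\ref{prop:mu}\ref{itm:muform3} with $w=x\alpha_{-y}$. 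That last identity is the ingredient your sketch never invokes, and without it (or an equivalent) the claimed collapse is unsubstantiated.
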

\begin{proof}\leavevmode
	\begin{enumerate}
		\item[(\textit{i-ii})] This is \cite[Proposition 2.18]{DMRijckenPSL}.
		\item[(\textit{iii})] We repeatedly use Lemma~\ref{prop:mu}\ref{itm:muform2}, where we interchanged $\tau$ and $\tau^{-1}$.
	\begin{align*}
		\mu_z &= \alpha_z\alpha_{-(z\tau)}^{\tau^{-1}}\alpha_{-\til z}
			= \alpha_z\alpha_{y\tau}^{\tau^{-1}}\alpha_{-(x\tau)}^{\tau^{-1}}\alpha_{-\til z} \\
			&= \alpha_z\alpha_{-\til y}\mu_{-y}\alpha_y\alpha_{-x}\mu_x\alpha_{\til x}\alpha_{-\til z} \\
			&= \alpha_{x\alpha_{-y}\mu_y}\mu_{-y}\alpha_y\alpha_{-x}\mu_x\alpha_{-(y\alpha_{-x}\mu_x)}
	\intertext{Hence, again using Lemma~\ref{prop:mu}\ref{itm:muform2} but now with $\mu_x$ and $\mu_y$, we get}
		\mu_y\mu_z\mu_{-x} &= \mu_y\alpha_{x\alpha_{-y}\mu_y}\mu_{-y}\alpha_y\alpha_{-x}\mu_x\alpha_{-(y\alpha_{-x}\mu_x)}\mu_{-x} \\
			&= \alpha_{-\til(x\alpha_{-y})}\mu_{y\alpha_{-x}}\alpha_{x\alpha_{-y}}\alpha_y\alpha_{-x}\alpha_{-(y\alpha_{-x})}\mu_{y\alpha_{-x}}\alpha_{\til(y\alpha_{-x})} \\
			&= \alpha_{-\til(x\alpha_{-y})}\mu_{-(x\alpha_{-y})}\alpha_{x\alpha_{-y})}\mu_{-(x\alpha_{-y})}\alpha_{\til-(x\alpha_{-y})} \\
			&= \mu_{-(x\alpha_{-y})} = \mu_{y\alpha_{-x}}\;,
	\end{align*}
	using Lemma~\ref{prop:mu}\ref{itm:muform3}.\qedhere
	\end{enumerate}
\end{proof}

\begin{lemma}\label{prop:huaAut}
	Let $\tau$ and $\mu$ be any two $\mu$-maps. Then for any $x\in X\setminus\overline{\infty}$, we have $\alpha_x^{\tau\mu} = \alpha_{x\tau\mu}$.
	In particular, $(\alpha_x\alpha_y)^{\tau\mu} = \alpha_{x\tau\mu}\alpha_{y\tau\mu}$ for all $x,y\in X\setminus\overline{\infty}$.
\end{lemma}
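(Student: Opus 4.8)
The plan is to exploit the fact that a product of two $\mu$-maps fixes both of the chosen points $0$ and $\infty$. First I would observe that since $\tau$ and $\mu$ each interchange $0$ and $\infty$, the product $\tau\mu$ fixes $0$ and $\infty$. Both $\tau$ and $\mu$ lie in $G$ (each being an element of some $U_0\alpha_x U_0$ by Proposition~\ref{pr:mu}), hence $\tau\mu\in G$, and so axiom \ref{itm:LM3} gives $U_\infty^{\tau\mu}=U_{\infty\cdot\tau\mu}=U_\infty$. In other words, conjugation by $\tau\mu$ restricts to an automorphism of the root group $U_\infty$.

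Next, fix $x\in X\setminus\overline{\infty}$. By definition $\alpha_x$ is the unique element of $U_\infty$ sending $0$ to $x$, so $\alpha_x^{\tau\mu}$ is again an element of $U_\infty$ by the previous paragraph. Since $\tau\mu$ (and hence $(\tau\mu)^{-1}$) fixes $0$, I would compute $0\cdot\alpha_x^{\tau\mu}=0\cdot(\tau\mu)^{-1}\alpha_x\,(\tau\mu)=0\cdot\alpha_x\,(\tau\mu)=x\tau\mu$. Because $\tau\mu$ preserves the equivalence relation and fixes $\overline{\infty}$, we get $x\tau\mu\nsim\infty$, so $\alpha_{x\tau\mu}$ is defined and is, by definition, the unique element of $U_\infty$ sending $0$ to $x\tau\mu$. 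The sharp transitivity of $U_\infty$ on $X\setminus\overline{\infty}$ from \ref{itm:LM2} then forces $\alpha_x^{\tau\mu}=\alpha_{x\tau\mu}$, which is the first assertion.

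For the ``in particular'' statement, I would simply note that conjugation by $\tau\mu$ is a group homomorphism, so $(\alpha_x\alpha_y)^{\tau\mu}=\alpha_x^{\tau\mu}\,\alpha_y^{\tau\mu}=\alpha_{x\tau\mu}\,\alpha_{y\tau\mu}$ for all $x,y\in X\setminus\overline{\infty}$. There is no genuine obstacle in this argument; the only points requiring a moment's care are confirming that $\tau\mu\in G$ so that \ref{itm:LM3} is applicable, and checking that $x\tau\mu\notin\overline{\infty}$ so that the right-hand side of the claimed identity is meaningful.
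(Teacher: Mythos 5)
Your argument is correct and complete: $\tau\mu$ lies in $G$ and fixes both $0$ and $\infty$, so by \ref{itm:LM3} it normalizes $U_\infty$, and sharp transitivity from \ref{itm:LM2} pins down $\alpha_x^{\tau\mu}$ as $\alpha_{x\tau\mu}$. The paper itself gives no argument here --- it simply cites a lemma about Hua maps from the earlier paper --- but the proof being invoked there is exactly the one you have reconstructed, so there is nothing further to compare.
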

\begin{proof}
	This is \cite[Lemma~2.23]{DMRijckenPSL}, using the fact that $\tau\mu$ is a Hua map (\cite[Definition~2.19]{DMRijckenPSL}).
\end{proof}

\subsection{Constructing local Moufang sets}

We already observed that the root groups of two non-equivalent points contain all the data of a local Moufang set. In fact, only one root group and one $\mu$-map are needed. The following construction starts with a group $U$ and an element $\tau$ satisfying some basic properties, and creates all the data for a local Moufang set.

\begin{construction}\label{constr:MUtau}
	The construction requires some data to start with. We need
	\begin{itemize}
		\item a set with an equivalence relation $(X,\lsim)$, such that $\abs{\overline{X}}>2$;
		\item a group $U\leq\Sym(X,\lsim)$;
        \item an element $\tau\in\Sym(X,\lsim)$.
	\end{itemize}
	The action of $U$ and $\tau$ will have to be sufficiently nice in order to do the construction, so we demand that
	\begin{enumerate}[label=\textnormal{(C\arabic*)},leftmargin=1cm]
		\item $U$ has a fixed point which we call $\infty$, and acts sharply transitively on $X\setminus\overline{\infty}$\label{itm:C1};
		\item[\mylabel{itm:C1'}{\textnormal{(C1')}}] the induced action of $U$ on $\overline{X}$ is sharply transitive on $\overline{X}\setminus\{\overline{\infty}\}$;
		\item $\infty\tau\nsim\infty$ and $\infty\tau^2=\infty$; we write $0:=\infty\tau$\label{itm:C2}.
	\end{enumerate}
	In this construction, we now define the following objects:
	\begin{itemize}
		\item For $x\nsim\infty$, we let $\alpha_x$ be the unique element of $U$ mapping $0$ to $x$ (by \ref{itm:C1} and \ref{itm:C2}).
		\item For $x\nsim\infty$, we write $\gamma_x:=\alpha_x^\tau$, which then maps $\infty$ to $x\tau$.
		\item We set $U_\infty := U$ and $U_0:=U_\infty^\tau$. The other root groups are defined as
		\[U_x:=U_0^{\alpha_x} \ \text{ for $x\nsim\infty$},\qquad U_x:=U_\infty^{\gamma_{x\tau^{-1}}} \ \text{for $x\sim\infty$}.\]
		\item As in the definition of local Moufang sets, we write $U_{\overline{x}}$ for the induced action of $U_x$ on~$\overline{X}$.
	\end{itemize}
	This gives us all the data that is needed for a local Moufang set; we denote the result of this construction by $\M(U,\tau)$.
    We will need some additional definitions, which we have seen before for local Moufang sets, but which we need to redefine in the current setup:
	\begin{itemize}
		\item We call $x\in X$ a unit if $x\nsim0$ and $x\nsim\infty$.
		\item For $x\nsim\infty$, we set $-x:=0\alpha_x^{-1}$.
		\item For a unit $x$, we define the $\mu$-map $\mu_x:=\gamma_{(-x)\tau^{-1}}\alpha_x\gamma_{-(x\tau^{-1})}$.
	\end{itemize}
\end{construction}

This construction does not always give rise to a local Moufang set. The following theorem gives some useful criteria.

\begin{theorem}\label{thm:constrMouf}
	Let $\M(U,\tau)$ be as in Construction~\ref{constr:MUtau}. Then $\M(U,\tau)$ is a local Moufang set if and only if one of the following equivalent conditions holds:
	\begin{enumerate}[itemsep=0.4ex]
		\item $U_\infty^{\gamma_{x\tau^{-1}}} = U_x$ for all units $x\in X$; \label{lem:gleq3}
		\item $U_0^{\mu_x} = U_\infty$ for all units $x\in X$; \label{lem:gleq4}
		\item $U_0 = U_\infty^{\mu_x}$ for all units $x\in X$. \label{lem:gleq5}
	\end{enumerate}
\end{theorem}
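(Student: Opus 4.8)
My plan is to prove the chain ``$\M(U,\tau)$ is a local Moufang set $\Rightarrow$ (i)$\,\wedge\,$(ii)$\,\wedge\,$(iii) $\Rightarrow$ \ref{itm:LM3} $\Rightarrow$ $\M(U,\tau)$ is a local Moufang set'', together with the direct equivalences (ii) $\Leftrightarrow$ (iii) and (i) $\Leftrightarrow$ (iii), so that each single condition implies the conjunction. The starting observation is that \ref{itm:LM1}, \ref{itm:LM2} and \ref{itm:LM2'} hold for $\M(U,\tau)$ \emph{unconditionally}, using only \ref{itm:C1}, \ref{itm:C1'} and \ref{itm:C2}: for $x\nsim\infty$ the root group $U_x=U_0^{\alpha_x}$ is $U_0$ conjugated by $\alpha_x\in U_\infty$ (which sends $0$ to $x$), transporting sharp transitivity; for $x\sim\infty$, $x\neq\infty$, $U_x=U_\infty^{\gamma_{x\tau^{-1}}}$ is $U_\infty$ conjugated by $\gamma_{x\tau^{-1}}\in U_0$ (which sends $\infty$ to $x$ and hence stabilises $\overline\infty=\overline x$ setwise); and \ref{itm:LM1} holds since by \ref{itm:C1'} any element of $U_\infty$ or $U_0$ stabilising a class other than $\overline\infty$, resp.\ $\overline 0$, induces the identity on $\overline X$. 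Thus ``$\M(U,\tau)$ is a local Moufang set'' is \emph{equivalent} to \ref{itm:LM3}. I also record that $G=\langle U_0,U_\infty\rangle$, since every root group is visibly a conjugate of $U_0$ or $U_\infty$ by an element of that subgroup.

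Granting \ref{itm:LM3}, conditions (i)--(iii) are immediate: $\gamma_{x\tau^{-1}}\in U_0\subseteq G$ sends $\infty$ to $x$, so $U_\infty^{\gamma_{x\tau^{-1}}}=U_x$, while $\mu_x\in G$ interchanges $0$ and $\infty$, so $U_0^{\mu_x}=U_\infty$ and $U_\infty^{\mu_x}=U_0$. The equivalences among (i)--(iii) do not use \ref{itm:LM3}. For (ii) $\Leftrightarrow$ (iii): a direct inspection of the defining formula shows $\mu_{-x}=\mu_x^{-1}$ (this needs only $\alpha_{-x}=\alpha_x^{-1}$, hence $\gamma_{-y}=\gamma_y^{-1}$), and $-x$ is a unit exactly when $x$ is, so applying (ii) to $-x$ gives $U_0=U_\infty^{\mu_x}$, i.e.\ (iii), and conversely. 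For (i) $\Leftrightarrow$ (iii): solving $\mu_x=\gamma_{(-x)\tau^{-1}}\alpha_x\gamma_{-(x\tau^{-1})}$ for $\gamma_{x\tau^{-1}}$ (after replacing $x$ by $-x$, using $\gamma_{-y}=\gamma_y^{-1}$ and $\alpha_{-x}=\alpha_x^{-1}$) gives $\gamma_{x\tau^{-1}}=\mu_x^{-1}\gamma_{(-x)\tau^{-1}}\alpha_x$; then, using $U_\infty^{\mu_x^{-1}}=U_\infty^{\mu_{-x}}$, that the $\gamma$'s lie in the self-normalising group $U_0$, and that $U_{-x}=U_0^{\alpha_x^{-1}}$, one reads off $U_\infty^{\gamma_{x\tau^{-1}}}=U_x \Leftrightarrow U_\infty^{\mu_{-x}}=U_0$, the latter being (iii) at the unit $-x$.

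The core is ``(i)$\,\wedge\,$(ii)$\,\wedge\,$(iii) $\Rightarrow$ \ref{itm:LM3}''. From (iii), $U_0=U_\infty^{\mu_a}$ for a fixed unit $a$, so $G=\langle U_\infty,\mu_a\rangle$; since \ref{itm:LM3} follows by induction on word length once it is known on a generating set, it suffices to prove $U_y^g=U_{yg}$ for all $y\in X$ and all $g\in U_\infty\cup\{\mu_a,\mu_{-a}\}$ (recall $\mu_a^{-1}=\mu_{-a}$). For $g\in U_\infty$, write $g=\alpha_c$ with $c:=0g\nsim\infty$: if $y\nsim\infty$ then $\alpha_y g=\alpha_{yg}$, so $U_y^g=U_0^{\alpha_{yg}}=U_{yg}$; $y=\infty$ is trivial; and for $y\sim\infty$, $y\neq\infty$ one must show $\gamma_{y\tau^{-1}}g\,\gamma_{(yg)\tau^{-1}}^{-1}$ (or $\gamma_{y\tau^{-1}}g$ when $yg=\infty$) normalises $U_\infty$ --- these elements lie in $G$, fix $\infty$, and must be rewritten via $\mu$-maps and elements of $U_\infty$ so the conjugation relations of (ii)/(iii) apply. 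For $g=\mu_a$ (the case $\mu_{-a}$ being symmetric), recall $\mu_a$ permutes $\overline X$ interchanging $\overline 0$ and $\overline\infty$: for $y\in\{0,\infty\}$ the claim is (ii)/(iii); for a unit $y$ a short computation --- transporting $U_0\leftrightarrow U_\infty$ by (ii) and (iii) --- yields $U_y^{\mu_a}=U_\infty^{\gamma_{(y\mu_a)\tau^{-1}}}$, and since $y\mu_a$ is again a unit this equals $U_{y\mu_a}$ precisely by (i); for $y\sim 0$ (resp.\ $y\sim\infty$) with $y\notin\{0,\infty\}$, the same computation gives $U_\infty^{\gamma_{(y\mu_a)\tau^{-1}}}$ (resp.\ $U_0^{\alpha_{y\mu_a}}$), which coincides with $U_{y\mu_a}$ directly by the defining formulas for the root groups.

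I expect the main difficulty to be exactly the subcase of \ref{itm:LM3} just flagged: $g\in U_\infty$, $y\sim\infty$, $y\neq\infty$ (and its mirror image $g\in U_0$, $y\sim 0$). One has to show the relevant products of $\alpha$'s and $\gamma$'s normalise $U_\infty$; concretely, this means identifying the subgroup of elements of $G$ fixing $\infty$ and normalising $U_\infty$ and checking that these products land there, for which the conjugation content of (ii)/(iii) --- in particular $\mu_a\mu_b\in N(U_\infty)$ for units $a,b$ --- is exactly what is needed. This phenomenon is specific to the local setting (in the classical case $\overline\infty=\{\infty\}$ and the subcase is empty), and it is also why the statement offers three conditions rather than one: each packages the same conjugation data in the shape most convenient for one of the cases above. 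A further point requiring care is that $\tau$ need not lie in $G$, so every normalisation argument must be routed through the $\mu$-maps and root-group conjugates, which do.
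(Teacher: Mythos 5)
The paper itself does not prove this theorem --- it cites Lemma~4.4 and Theorem~4.6 of \cite{DMRijckenPSL} --- so your attempt has to be judged on its own. Your overall architecture is sound and, as far as I can tell, matches what the cited proof must do: \ref{itm:LM1}, \ref{itm:LM2} and \ref{itm:LM2'} do hold unconditionally from \ref{itm:C1}, \ref{itm:C1'} and \ref{itm:C2}, so the theorem reduces to \ref{itm:LM3}; the equivalences among (i)--(iii) via $\mu_{-x}=\mu_x^{-1}$ and $\gamma_{x\tau^{-1}}=\mu_{-x}\gamma_{(-x)\tau^{-1}}\alpha_x$ are correct; and your treatment of the generator $\mu_a$ is complete and correct in all four subcases (the identities $\alpha_y^{\mu_a}=\gamma_{(y\mu_a)\tau^{-1}}$ and $\gamma_{y\tau^{-1}}^{\mu_a}=\alpha_{y\mu_a}$, together with $U_\infty^{\mu_a^2}=U_\infty$ from (ii)+(iii), do the job, and (i) is needed exactly when $y$ is a unit).

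However, the case you flag as ``the main difficulty'' is a genuine gap, not merely a deferred computation. For $g=\alpha_c\in U_\infty$ and $y\sim\infty$, $y\neq\infty$, you must show that
\[
\gamma_{y\tau^{-1}}\,\alpha_c\,\gamma_{(y\alpha_c)\tau^{-1}}^{-1}
\]
normalises $U_\infty$, where $y\tau^{-1}$ and $(y\alpha_c)\tau^{-1}$ are \emph{non-units} equivalent to $0$. None of the conditions (i)--(iii) says anything directly about conjugation by $\gamma_v$ for $v\sim 0$: they only control $\gamma_{x\tau^{-1}}$ and $\mu_x$ for \emph{units} $x$. So one is forced to decompose, e.g.\ $\gamma_v=\gamma_{v\alpha_{-w}}\gamma_w$ with $w$ a unit, apply (i) to each unit-indexed factor, and then commute $\alpha_c$ through the resulting conjugates of $U_0$ and $U_\infty$ --- and it is precisely this bookkeeping (in effect, identifying enough of $N_G(U_\infty)$ among products of $\alpha$'s, $\gamma$'s and $\mu$-maps) that constitutes the substance of the cited Lemma~4.4/Theorem~4.6. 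Your proposal asserts that ``the conjugation content of (ii)/(iii), in particular $\mu_a\mu_b\in N(U_\infty)$, is exactly what is needed'' but does not exhibit the rewriting, and it is not obvious that it terminates without further identities. Until this subcase is carried out, the implication (i)$\,\wedge\,$(ii)$\,\wedge\,$(iii)$\;\Rightarrow\;$\ref{itm:LM3} --- and hence the ``if'' direction of the theorem --- is not established. Everything else in your write-up I would accept.
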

\begin{proof}
	This is \cite[Lemma 4.4 and Theorem 4.6]{DMRijckenPSL}.
\end{proof}

We repeat Remark~4.7 from \cite{DMRijckenPSL}:

\begin{remark}\label{remark:constructioniso}
	Let $\M(U,\tau)$ and $\M(U',\tau')$ be given by Construction~\ref{constr:MUtau}, with actions on $(X,\lsim)$ and $(X',\lsim')$ respectively,
    and assume that there is a bijection $\phi \colon X\to X'$ and a group isomorphism $\theta \colon U\to U'$ such that
	\begin{itemize}
		\item for all $x,y\in X$, we have $x\sim y \iff \phi(x)\sim'\phi(y)$;
		\item for all $x\in X$ and $u\in U$, we have $\phi(x\cdot u) = \phi(x)\cdot \theta(u)$;
		\item for all $x\in X$, we have $\phi(x\cdot\tau) = \phi(x)\cdot\tau'$.
	\end{itemize}
    Then $\M(U,\tau)$ and $\M(U',\tau')$ are isomorphic.
\end{remark}

\section{Special local Moufang sets}\label{sec:SpecialLMS}

\subsection{Basic properties}

In \cite{DMRijckenPSL}, we used the notion of special local Moufang sets to characterize the local Moufang sets of the form $\M(R)$ for a local ring $R$. We will need to extend the theory of special local Moufang sets in order to do the same for local Moufang sets originating from local Jordan pairs.
In particular, we will study the $k$-divisibility of elements, similarly to what had been done for (ordinary) Moufang sets in \cite[Proposition~4.6]{MR2425693}.

\begin{definition}
	A local Moufang set $\M$ is called \emph{special} if $\til x=-x$ for all units $x\in X$,
    or equivalently, if $(-x)\tau=-(x\tau)$ for all units $x\in X$.
\end{definition}

\begin{lemma}\label{lem:specialmu}
	Let $x\in X$ be a unit in a special local Moufang set. Then
	\begin{enumerate}[topsep=0pt,itemsep=0.4ex]
		\item $(-y)\mu_x = -(y\mu_x)$ for all units $y\in X$; \label{itm:special-i}
		\item $\mu_x = \alpha_x\alpha_{-x\tau^{-1}}^\tau\alpha_x$; \label{itm:special-ii}
		\item $-x = x\mu_x = x\mu_{-x}$; \label{itm:special-iii}
		\item $\mu_x = \alpha_x\alpha_x^{\mu_{\pm x}}\alpha_x$; \label{itm:special-iv}
		\item $\mu_{-x} = \alpha_x\mu_{-x}\alpha_x\mu_{-x}\alpha_x$. \label{itm:special-v}
	\end{enumerate}
\end{lemma}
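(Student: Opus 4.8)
The plan is to derive the five identities in order, since each one builds on the previous. For (\textit{i}): apply Lemma~\ref{prop:mu}\ref{itm:tilmu}, which gives $\til x = -((-x)\mu_x)$, together with the defining property of specialness ($\til x = -x$). This gives $(-x)\mu_x = x$, but more usefully, replacing $x$ by the unit $y\mu_x$ (which is a unit by Lemma~\ref{prop:mu}(i)) inside the relation $\til{y} = -((-y)\mu_y)$ doesn't immediately work; instead I would use the conjugation behaviour. Actually the cleanest route: specialness says $\til y = -y$, and Lemma~\ref{prop:mu}\ref{itm:tilmu} applied to the unit $y$ relative to the $\mu$-map $\tau := \mu_x$ (which is legitimate by Lemma~\ref{prop:mu}\ref{itm:muindep} and \ref{itm:tiltau}, since $\til y$ and $\mu_y$ don't depend on the choice of $\tau$) yields $\til y = -((-y)\mu_x \cdot \text{(correction)})$; I'll need to be careful about which $\mu$-map is used in the definition of $\til{\,}$. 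The honest statement is that $(-y)\mu_x = -(y\mu_x)$ should follow from Lemma~\ref{prop:mu}\ref{itm:tiltau} (independence of $\til{\,}$ from $\tau$) applied with $\tau$ replaced by $\mu_x$: then $\til y$ computed via $\mu_x$ equals $(-((-y)\mu_x^{-1}))\mu_x$, and specialness forces this to be $-y$, which rearranges to the claim after also using $\mu_x^{-1} = \mu_{-x}$ and part (\textit{i}) of Lemma~\ref{prop:mu}.

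For (\textit{ii}): start from Lemma~\ref{prop:mu}\ref{itm:muform2}, which reads $\mu_x = \alpha_x\alpha_{-(x\tau^{-1})}^\tau\,\alpha_{-\til x}$. Specialness gives $\til x = -x$, hence $-\til x = x$, and the formula collapses to $\mu_x = \alpha_x\alpha_{-(x\tau^{-1})}^\tau\alpha_x$, which is exactly (\textit{ii}). This step should be immediate.

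For (\textit{iii}): from $\til x = -x$ and Lemma~\ref{prop:mu}\ref{itm:tilmu} we get $-x = -((-x)\mu_x)$, i.e. $(-x)\mu_x = x$; applying this with $x$ replaced by $-x$ (also a unit, by Proposition~\ref{pr:unit}) gives $x\mu_{-x} = -x$. For the remaining equality $x\mu_x = -x$: use $\mu_{-x} = \mu_x^{-1}$ (Lemma~\ref{prop:mu}(iii)) together with part (\textit{i}) just proved — from $(-x)\mu_x = x$ and (\textit{i}), $x\mu_x = -((-x)\mu_x^{-1}) \cdot(\ldots)$; more directly, apply $\mu_x$ to both sides of $(-x)\mu_x=x$ is circular, so instead: $(-x)\mu_x = x \Rightarrow$ (applying (\textit{i}) with $y=-x$) $(-(-x))\mu_x = -((-x)\mu_x)$, i.e. $x\mu_x = -x$. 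That closes (\textit{iii}).

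For (\textit{iv}) and (\textit{v}): rewrite $\alpha_{-(x\tau^{-1})}^\tau$. By Lemma~\ref{prop:huaAut} or directly, conjugation relates $\alpha_{-(x\tau^{-1})}^\tau$ to an $\alpha$ indexed by an image point; using (\textit{iii}) ($x\mu_x = -x$, etc.) and the relation between $\tau$ and $\mu_x$ one identifies $\alpha_{-(x\tau^{-1})}^\tau$ with $\alpha_x^{\mu_{\pm x}}$. Concretely, from Proposition~\ref{pr:mu}, $\mu_x = g\alpha_x h$ with $g\in U_0$ sending $\infty\mapsto -x$ and $h\in U_0$ sending $x\mapsto\infty$; combined with (\textit{ii}) this pins down the middle factor, giving $\mu_x = \alpha_x\alpha_x^{\mu_{\pm x}}\alpha_x$ — the $\pm$ reflecting that $\mu_x$ and $\mu_{-x}=\mu_x^{-1}$ conjugate the relevant root group the same way up to the symmetry in (\textit{i}). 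Then (\textit{v}) follows by substituting the expression $\alpha_x^{\mu_{-x}} = \mu_{-x}^{-1}\alpha_x\mu_{-x} = \mu_x\alpha_x\mu_{-x}$ into (\textit{iv}) with the sign chosen as $-x$, and simplifying via $\mu_x\mu_{-x} = \id$. Alternatively, (\textit{v}) is just the specialisation of Lemma~\ref{prop:mu}\ref{itm:muform3}: that identity reads $\mu_{-x} = \alpha_{-\til x}\mu_{-x}\alpha_x\mu_{-x}\alpha_{\til{-x}}$, and specialness ($\til x = -x$, hence $-\til x = x$ and $\til{-x} = -(-x) = x$) turns it into $\mu_{-x} = \alpha_x\mu_{-x}\alpha_x\mu_{-x}\alpha_x$. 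So (\textit{v}) is immediate from \ref{itm:muform3}, and the main genuine work is (\textit{iv}), whose proof requires carefully matching the conjugate $\alpha_{-(x\tau^{-1})}^\tau$ from (\textit{ii}) against $\alpha_x^{\mu_{\pm x}}$ using Lemma~\ref{prop:huaAut} and the image-point computations of (\textit{iii}); that identification is the step I expect to be the main obstacle, mostly bookkeeping about which point each root-group element moves where.
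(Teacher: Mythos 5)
The paper offers no proof of this lemma at all --- it is quoted verbatim from \cite[Lemma~5.2]{DMRijckenPSL} --- so your attempt has to stand on its own. Parts (\textit{i}), (\textit{ii}) and (\textit{iii}) are correct and essentially complete: (\textit{i}) does follow from the $\tau$-independence of $\til{\phantom{x}}$ (Lemma~\ref{prop:mu}\ref{itm:tiltau}) with $\mu_x$ playing the role of $\tau$, (\textit{ii}) is Lemma~\ref{prop:mu}\ref{itm:muform2} with $-\til x=x$, and (\textit{iii}) combines Lemma~\ref{prop:mu}\ref{itm:tilmu} with (\textit{i}). For (\textit{v}), your second route --- specializing Lemma~\ref{prop:mu}\ref{itm:muform3} using $-\til x=\til{(-x)}=x$ --- is correct and immediate. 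Your first route for (\textit{v}), however, does not work: substituting $\alpha_x^{\mu_{-x}}=\mu_x\alpha_x\mu_{-x}$ into (\textit{iv}) gives $\mu_x=\alpha_x\mu_x\alpha_x\mu_{-x}\alpha_x$, whose left-hand side and middle $\mu$-map are both wrong for (\textit{v}); one cannot fix this by ``simplifying via $\mu_x\mu_{-x}=\id$'', and $\mu_x=\mu_{-x}$ is not available at this stage (it requires abelian root groups, Proposition~\ref{prop:mu-involution}).

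The genuine gap is (\textit{iv}), which you explicitly leave open as ``the main obstacle''. It closes in one line once you notice that both $\mu_x$ and the formula of Lemma~\ref{prop:mu}\ref{itm:muform2} are independent of the choice of $\tau$ (Lemma~\ref{prop:mu}\ref{itm:muindep} and \ref{itm:tiltau}), so you may take $\tau:=\mu_{\pm x}$ in (\textit{ii}); then $-(x\mu_{\pm x}^{-1})=-(x\mu_{\mp x})=-(-x)=x$ by (\textit{iii}), and (\textit{ii}) literally reads $\mu_x=\alpha_x\alpha_x^{\mu_{\pm x}}\alpha_x$. Alternatively, the ``bookkeeping'' you defer is: $\alpha_{-(x\tau^{-1})}^{\tau}=\gamma_{-(x\tau^{-1})}$ lies in $U_0=U_\infty^\tau$ and sends $\infty$ to $(-(x\tau^{-1}))\tau=\til x=-x$, while $\alpha_x^{\mu_{\pm x}}$ lies in $U_\infty^{\mu_{\pm x}}=U_{\infty\mu_{\pm x}}=U_0$ by \ref{itm:LM3} and sends $\infty$ to $0\alpha_x\mu_{\pm x}=x\mu_{\pm x}=-x$ by (\textit{iii}); sharp transitivity of $U_0$ on $X\setminus\overline{0}$ forces the two to coincide. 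Note that your appeal to Lemma~\ref{prop:huaAut} is misplaced here: that lemma governs conjugation by a \emph{product} of two $\mu$-maps, which normalizes $U_\infty$, whereas (\textit{iv}) involves conjugation by a single $\mu$-map, which carries $U_\infty$ to $U_0$.
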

\begin{proof}
	This is \cite[Lemma 5.2]{DMRijckenPSL}.
\end{proof}

\begin{lemma}\label{lem:specialsum}
	If $x,y\in X$ are units in a special local Moufang set, and $x\alpha_y$ is a unit, then
	\[x\mu_{x\alpha_y} = (-y)\alpha_{-x}\alpha_{x\mu_y}\alpha_{-y}\,.\]
\end{lemma}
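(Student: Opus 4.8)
The plan is to express everything in terms of the $\mu$-maps and the formulas already available, and to reduce the desired identity to a statement about $\mu$-maps that follows from Proposition~\ref{prop:sumform}(\textit{iii}) or Lemma~\ref{prop:mu}. The key observation is that the right-hand side $(-y)\alpha_{-x}\alpha_{x\mu_y}\alpha_{-y}$ is an element of $X$ obtained by applying a product of root-group elements from $U_\infty$ to a point, whereas the left-hand side $x\mu_{x\alpha_y}$ involves the $\mu$-map of the unit $x\alpha_y$. So the natural strategy is: first rewrite $\mu_{x\alpha_y}$ using the conjugation/translation behaviour of $\mu$-maps, then compute the image of $x$ under the resulting expression.

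First I would set $w := x\alpha_y$ (a unit by hypothesis) and try to relate $\mu_w$ to $\mu_x$, $\mu_y$ and translations. The tool for this is Proposition~\ref{prop:sumform}(\textit{iii}): with a suitable choice of the two units playing the roles of ``$x$'' and ``$y$'' there, one gets an identity of the form $\mu_y\mu_z\mu_{-x} = \mu_{y\alpha_{-x}}$, and by relabelling (replacing $x$ by $-x$, or swapping roles) this should yield $\mu_{x\alpha_y}$ as a product $\mu_{?}\mu_{?}\mu_{?}$ together with the explicit description of the auxiliary element $z$ from part (\textit{ii}) of that proposition, namely $z = x\alpha_{-y}\mu_y\alpha_{\til y}$ and its ``tilde''. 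Once $\mu_{x\alpha_y}$ is written this way, I would compute $x\mu_{x\alpha_y}$ by tracking the image of $x$ step by step, using $0\mu = \infty$, $\infty\mu = 0$, $x\mu_x = -x$ in the special case (Lemma~\ref{lem:specialmu}\ref{itm:special-iii}), $(-y)\mu_x = -(y\mu_x)$ (Lemma~\ref{lem:specialmu}\ref{itm:special-i}), and Lemma~\ref{prop:huaAut} to move $\alpha$'s past $\mu$-maps (it converts $\alpha_v^{\tau\mu}$ into $\alpha_{v\tau\mu}$, and more generally respects products of $\alpha$'s).

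Alternatively — and this may be cleaner — I would avoid the full strength of Proposition~\ref{prop:sumform} and argue directly. Both sides are elements of $X \setminus \overline\infty$, so it suffices to show they have the same image under $\alpha^{-1}$ of something, i.e.\ to compare them after applying a convenient group element, OR to check that a single element of $U_\infty$ carries one to the other and fixes enough points. Concretely: $x\mu_{x\alpha_y}$ is, by definition of the $\mu$-map (Lemma~\ref{prop:mu}), obtained from $x$ via an element of $U_0\,\alpha_{x\alpha_y}\,U_0$; I would write this element explicitly using Lemma~\ref{lem:specialmu}\ref{itm:special-ii} (the special-case formula $\mu_v = \alpha_v \alpha_{-v\tau^{-1}}^\tau \alpha_v$) with $v = x\alpha_y$, and then use the additivity of translations, $\alpha_{x\alpha_y} = \alpha_x\alpha_y$ when $x, y, x\alpha_y$ are all $\nsim\infty$ and the group $U_\infty$ is abelian — this is exactly where ``special local Moufang set with abelian root groups'' enters. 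Expanding $(\alpha_x\alpha_y)^\tau$ as $\alpha_x^\tau \alpha_y^\tau = \gamma_x\gamma_y$ and commuting factors in the abelian groups $U_0$ and $U_\infty$, the expression should collapse to the claimed product $(-y)\alpha_{-x}\alpha_{x\mu_y}\alpha_{-y}$.

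The main obstacle I anticipate is bookkeeping: keeping straight which factors lie in $U_\infty$ versus $U_0$ (only within one of these groups may one freely commute), correctly handling the ``$-$'' and ``$\til{}$'' operations under the maps (using speciality to replace $\til v$ by $-v$ throughout), and making sure that every intermediate point stays a unit or at least stays $\nsim\infty$ so that $\alpha$ is defined and additive. A secondary subtlety is that $x\mu_y$ appears as a subscript on the right-hand side, so at some point I will need the identity $x\mu_y$ is a unit (Lemma~\ref{prop:mu}(1) applied to the unit $y$, noting $x$ is a unit) together with $\til{(x\mu_y)} = -(x\mu_y)$; and I will likely need to recognize a sub-product as $\mu_y$ itself, via Lemma~\ref{lem:specialmu}\ref{itm:special-ii} read in the other direction. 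If the direct computation gets unwieldy, the fallback is to apply both sides to the two reference points $0$ and $\infty$ after conjugating by $\mu_y$ or $\mu_{x\alpha_y}$ and invoke sharp transitivity, reducing the identity to one already proved among $\mu$-maps in Proposition~\ref{prop:sumform}(\textit{iii}).
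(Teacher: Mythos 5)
First, note that the paper itself does not prove this lemma: it is quoted verbatim from \cite{DMRijckenPSL}*{Lemma 5.3}, so there is no in-paper argument to match your plan against. Judged on its own terms, your proposal assembles the right toolbox but leaves a genuine gap on each of its two routes. The more serious problem is in your ``cleaner'' alternative: you invoke commutativity of $U_\infty$ and $U_0$ to make the expression ``collapse,'' but the lemma assumes only that the local Moufang set is \emph{special} -- abelian root groups are not a hypothesis. The asymmetric shape of the right-hand side, with $\alpha_{-y}$ appearing at both ends of $(-y)\alpha_{-x}\alpha_{x\mu_y}\alpha_{-y}$ rather than being merged, is exactly the non-abelian form of the identity, so any argument that freely reorders factors of $U_\infty$ proves at best a special case. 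Moreover, the one step you single out as requiring abelianness, namely $\alpha_{x\alpha_y}=\alpha_x\alpha_y$, does not need it at all: $\alpha_x\alpha_y\in U_\infty$ sends $0$ to $x\alpha_y$, so this is just sharp transitivity \ref{itm:LM2}.

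Your first route is better aimed but stops exactly where the work begins. Applying Proposition~\ref{prop:sumform}(\textit{iii}) with the roles $(x,y)\mapsto(-y,x)$ is legitimate (the hypothesis $x\nsim -y$ there is precisely the assumption that $x\alpha_y$ is a unit), and it yields $\mu_{x\alpha_y}=\mu_x\mu_{z_0}\mu_y$ with $z_0=(-y)\alpha_{-x}\mu_x\alpha_{\til x}$, hence $x\mu_{x\alpha_y}=(-x)\mu_{z_0}\mu_y$ using $x\mu_x=-x$. But evaluating $(-x)\mu_{z_0}$, where $z_0$ is this auxiliary point, is not visibly easier than the original problem: $-x$ bears no obvious relation to $z_0$ or $-z_0$, so none of the ready-made evaluations ($0\mu=\infty$, $\infty\mu=0$, $z_0\mu_{z_0}=-z_0$) applies, and Lemma~\ref{prop:huaAut} only moves $\alpha$'s past \emph{products of two} $\mu$-maps, not past a single $\mu_{z_0}$. ``The expression should collapse'' is precisely the content of the lemma, and neither route supplies the computation that makes it collapse. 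A workable direct attack would rather start from Lemma~\ref{lem:specialmu}\ref{itm:special-ii} applied to $w:=x\alpha_y$, write $x=w\alpha_{-y}$ so that $x\mu_w=(-w)\cdot\alpha_{-y}^{\mu_w}$ with $\alpha_{-y}^{\mu_w}\in U_0$ sending $\infty$ to $-(y\mu_w)$, and then pin down the action of that $U_0$-element on $-w$; but that step still has to be carried out, and your proposal does not do so.
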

\begin{proof}
	This is \cite[Lemma 5.3]{DMRijckenPSL}.
\end{proof}

\begin{definition}
	For $x\in X\setminus\overline{\infty}$ and $n\geq1$, we define $x\cdot n:=0\cdot\alpha_x^n$.
\end{definition}

\begin{lemma}\label{lem:timesn_sim}
	Let $n\in\N$ and $x,y\in X\setminus\overline{\infty}$. If $x\sim y$, then $x\cdot n\sim y\cdot n$.
\end{lemma}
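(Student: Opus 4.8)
The statement to prove is Lemma~\ref{lem:timesn_sim}: for $n \in \N$ and $x, y \in X \setminus \overline{\infty}$ with $x \sim y$, we have $x \cdot n \sim y \cdot n$, where $x \cdot n = 0 \cdot \alpha_x^n$. The natural approach is induction on $n$. The base cases $n = 0$ (if we allow it, giving $0 \sim 0$) and $n = 1$ (giving $x \sim y$, which is the hypothesis) are immediate. So the work is entirely in the inductive step: assuming $x \cdot n \sim y \cdot n$, deduce $x \cdot (n+1) \sim y \cdot (n+1)$.

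First I would look for a recursive description of $x \cdot (n+1)$ in terms of $x \cdot n$. Since $x \cdot (n+1) = 0 \cdot \alpha_x^{n+1} = (0 \cdot \alpha_x^n) \cdot \alpha_x = (x \cdot n) \cdot \alpha_x$, we have the clean formula $x \cdot (n+1) = (x\cdot n)\alpha_x$. Now the key tool is axiom \ref{itm:LM1} together with the definition of $\M$ being a local Moufang set: since $x \sim y$, we have $U_{\overline{x}} = U_{\overline{y}}$, so $\overline{\alpha_x}$ and $\overline{\alpha_y}$ both lie in $U_{\overline{x}} = U_{\overline{y}}$, and in fact $\overline{\alpha_x} = \overline{\alpha_y}$: both are the unique element of $U_{\overline{\infty}}$ sending $\overline{0}$ to $\overline{x} = \overline{y}$ (using \ref{itm:LM2'} and the fact that $\alpha_x \in U_\infty$ sends $0$ to $x$, hence $\overline{\alpha_x}$ sends $\overline{0}$ to $\overline{x}$). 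Therefore, as permutations of $\overline{X}$, $\alpha_x$ and $\alpha_y$ induce the same map. Combined with the inductive hypothesis $\overline{x\cdot n} = \overline{y \cdot n}$, applying this common induced permutation gives $\overline{(x\cdot n)\alpha_x} = \overline{(y\cdot n)\alpha_x} = \overline{(y\cdot n)\alpha_y}$, i.e. $x \cdot (n+1) \sim y \cdot (n+1)$, completing the induction.

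The one point requiring care — and the main (mild) obstacle — is verifying that $\overline{\alpha_x} = \overline{\alpha_y}$ rather than merely that they lie in the same root group $U_{\overline{x}}$. This rests on pinning down $\overline{\alpha_x}$ as a specific element via sharp transitivity of $U_{\overline{\infty}}$ on $\overline{X} \setminus \{\overline{\infty}\}$ (axiom \ref{itm:LM2'}): since $\alpha_x$ fixes $\infty$ and sends $0 \mapsto x$, the induced map $\overline{\alpha_x}$ fixes $\overline{\infty}$ and sends $\overline{0}\mapsto\overline{x}$; the same holds for $\overline{\alpha_y}$ with $\overline{y} = \overline{x}$; uniqueness then forces $\overline{\alpha_x}=\overline{\alpha_y}$. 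A small implicit check is that $\overline{x\cdot n}$ and $\overline{y\cdot n}$ are not $\overline{\infty}$, so that the domain of the induced permutation behaves as expected; but since everything is a permutation of all of $\overline{X}$ this is not actually needed — equivalence-preserving permutations respect $\lsim$, so $a \sim b$ directly implies $a g \sim b g$ for any $g \in \Sym(X,\lsim)$, and we may simply apply this with $a = x\cdot n$, $b = y\cdot n$, and the fact that $\alpha_x$, $\alpha_y$ agree modulo $\lsim$. This streamlined version is the one I would write up.
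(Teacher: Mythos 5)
Your proposal is correct and rests on exactly the same key observation as the paper's proof, namely that $x\sim y$ forces $\overline{\alpha_x}=\overline{\alpha_y}$ by the sharp transitivity in \ref{itm:LM2'}; the paper then simply raises this to the $n$-th power (using $\alpha_{x\cdot n}=\alpha_x^n$) where you unroll the same computation as an induction. The difference is purely presentational.
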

\begin{proof}
	First observe that by \ref{itm:LM2'}, we have
	\[x\sim y \iff 0\alpha_x\alpha_y^{-1}\sim 0 \iff \overline{0}\overline{\alpha}_x\overline{\alpha}_y^{-1} = \overline{0}
			\iff \overline{\alpha}_x\overline{\alpha}_y^{-1} = \id \iff \overline{\alpha_x} = \overline{\alpha_y}\;.\]
	Hence, we get
	\[x\sim y\iff \overline{\alpha_x} = \overline{\alpha_y}\implies \overline{\alpha_x}^n = \overline{\alpha_y}^n \iff \overline{\alpha_{x\cdot n}} = \overline{\alpha_{y\cdot n}} \iff x\cdot n\sim y\cdot n\;,\]
	so indeed $x\sim y\implies x\cdot n\sim y\cdot n$.
\end{proof}

\begin{lemma}\label{lem:ndiv}
	Let $n\in\N$ and $x$ be a unit in a special local Moufang set. If $x\cdot k$ is a unit for all $k\leq n$, then the following hold:
	\begin{enumerate}[topsep=0pt,itemsep=0.4ex]
		\item $(x\cdot k)\mu_{-x}\cdot k = -x$ for all $k\leq n$.\label{lem:ndiv-i}
		\item $x\tau\cdot k$ is a unit for all $k\leq n$.\label{lem:ndiv-ii}
		\item $(x\cdot k)\tau\cdot k = x\tau$ for all $k\leq n$.\label{lem:ndiv-iii}
		\item For all $k\leq n$, $y_k := (-x\cdot k)\mu_{-x}$ is the unique element such that $y_k\cdot k = x$, and $y_k\cdot \ell$ is a unit for all $\ell\leq n$.\label{lem:ndiv-iv}
		\item $(x\cdot n)\cdot k$ is a unit for all $k\leq n$.\label{lem:ndiv-v}
	\end{enumerate}
\end{lemma}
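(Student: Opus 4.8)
The plan is to prove the five statements more or less in the order given, using induction on $n$ (or on $k$) and leaning on the special-case identities in Lemma~\ref{lem:specialmu} and Lemma~\ref{lem:specialsum}, together with the multiplicativity of $\mu$-maps and Hua maps (Lemma~\ref{prop:huaAut}) on the $\alpha$'s. Throughout, the crucial point is that $x \cdot k = 0\cdot\alpha_x^k$ and that $\alpha_{x\cdot k}$ and $\overline{\alpha_x}^k = \overline{\alpha_{x\cdot k}}$ behave well; in particular, by Proposition~\ref{pr:unit}, ``$x\cdot k$ is a unit'' is a statement about $\overline{\alpha_x}^k$ not fixing $\overline{0}$, so the hypothesis is really about the induced Moufang set $\overline{\M}$, where the corresponding global statement is Proposition~4.6 of \cite{MR2425693}.

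First I would establish (i). Applying $\mu_{-x}$ sends $0\mapsto\infty$ and $\infty\mapsto 0$, and by Lemma~\ref{prop:mu} and Lemma~\ref{lem:specialmu}, conjugation by $\mu_{-x}$ sends $\alpha_x$ to an element of $U_0$; the key is that $(x\cdot k)\mu_{-x} = 0\cdot\alpha_x^k\mu_{-x} = 0\cdot\mu_{-x}(\alpha_x^{\mu_{-x}})^k = \infty\cdot(\alpha_x^{\mu_{-x}})^k$, and using $\mu_{-x}^2 \in U_0$-type relations one identifies $((x\cdot k)\mu_{-x})\cdot k$ by computing $\infty\cdot(\text{something})^k$ back through $\mu_{-x}$. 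Here Lemma~\ref{lem:specialmu}\ref{itm:special-iii} ($-x = x\mu_{-x}$) pins down the base case $k=1$, and the relation $\mu_{-x}\alpha_x\mu_{-x}\alpha_x\mu_{-x} = $ (a $\mu$-map) from \ref{itm:special-v} is what lets the $k$-fold iteration telescope. Statement (ii) then follows by applying $\tau$: since $x\tau\cdot k = 0\cdot\alpha_{x\tau}^k = 0\cdot\gamma_x^k\cdot(\text{correction})$ and $\gamma_x = \alpha_x^\tau$, one translates unit-ness of $x\tau\cdot k$ into a statement about $\overline{\alpha_x}^k$ via $\overline{\tau}$, which holds by (the hypothesis and) Proposition~\ref{pr:unit}. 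Statement (iii), $(x\cdot k)\tau\cdot k = x\tau$, I would get by combining (i) with the definition of $\mu$ and the special identity \ref{itm:special-ii}: indeed $\mu_{-x} = \alpha_{-x}\alpha_{x\tau^{-1}}^\tau\alpha_{-x}$ in the special case, so $(x\cdot k)\mu_{-x}\cdot k = -x$ unpacks into an identity relating $(x\cdot k)\tau$ and $x\tau$ after applying $\tau$ and $\tau^{-1}$ appropriately.

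For (iv): once (i) holds, set $y_k := (-x\cdot k)\mu_{-x}$; then $y_k\cdot k = ((-x)\cdot k)\mu_{-x}\cdot k = -(-x) = x$ by (i) applied to the unit $-x$ (note $(-x)\cdot j$ is a unit for $j\le n$ by Proposition~\ref{pr:unit} again), giving existence. Uniqueness is because $z\cdot k = x$ means $0\cdot\alpha_z^k = x$, and if $z'\cdot k = x$ too then $\overline{\alpha_z}^k = \overline{\alpha_{z'}}^k$ as permutations fixing $\overline{\infty}$; in the sharply transitive abelian-on-$\overline X$ setting one concludes $\overline{\alpha_z} = \overline{\alpha_{z'}}$ provided the group $U_{\overline\infty}$ is uniquely $k$-divisible along the relevant orbit — this is exactly where one invokes the hypothesis that all the intermediate multiples are units, mirroring \cite[Proposition~4.6]{MR2425693}. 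That $y_k\cdot\ell$ is a unit for $\ell\le n$ follows from (ii)/(i) applied to the unit $x$ and tracking $\overline{\alpha}$'s through $\mu_{-x}$. Finally (v): $(x\cdot n)\cdot k$ is a unit because $\overline{\alpha_{x\cdot n}} = \overline{\alpha_x}^n$, so $(x\cdot n)\cdot k$ corresponds to $\overline{\alpha_x}^{nk}$ — but one only knows multiples up to $n$ are units, so here one should instead argue $(x\cdot n)\cdot k = (x\cdot k)\cdot n$ type commutation (Lemma~\ref{lem:timesn_sim}-style reasoning on $\overline{\alpha}$) together with (iv): writing $x\cdot n = y_k\cdot(\text{stuff})$ is too optimistic, so more carefully $(x\cdot n)\cdot k$ sits inside the orbit generated by $x$ under powers of $\alpha$, and (iv) guarantees a $k$-th ``root'' $y_k$ of $x$ whose multiples $y_k\cdot\ell$ are all units for $\ell\le n$, hence $y_k\cdot(nk)$... — the clean route is to use $y_n := (-x\cdot n)\mu_{-x}$ from (iv), note $y_n\cdot n = x$ and $y_n\cdot\ell$ is a unit for $\ell\le n$, and then $(x\cdot n)\cdot k = (y_n\cdot n)\cdot n \cdots$; I expect the honest statement to be that $(x\cdot n)\cdot k$ is a unit because it equals $x\cdot(nk)$ reorganized as $(x\cdot k)\cdot n$ and apply (ii)-type transfer — the precise bookkeeping here is the main obstacle.

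The hard part will be the unique $k$-divisibility used in the uniqueness half of (iv) and throughout (v): one must show that the relevant element of $U_{\overline\infty}$ (a unipotent-type subgroup of the induced Moufang set) admits a unique $k$-th root, which is only true under the unit hypotheses on all intermediate multiples, and which is the local-Moufang-set analogue of \cite[Proposition~4.6]{MR2425693}. The rest is a sequence of $\mu$- and $\tau$-conjugation computations that are routine given Lemmas~\ref{prop:mu}, \ref{lem:specialmu}, \ref{lem:specialsum} and \ref{prop:huaAut}, but must be threaded together carefully because each statement feeds the next.
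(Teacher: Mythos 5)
Your overall toolkit is the right one (the special identities of Lemma~\ref{lem:specialmu}, the Hua-map property of Lemma~\ref{prop:huaAut}, and the fact that $\alpha_{x\cdot k}=\alpha_x^k$), but there are genuine gaps. The most serious one is the uniqueness half of (\textit{iv}). You propose to deduce it from unique $k$\dash divisibility of $U_{\overline\infty}$ in the induced Moufang set $\overline{\M}$; but an equality $\overline{\alpha_z}=\overline{\alpha_{z'}}$ only yields $z\sim z'$, not $z=z'$, so this can never give uniqueness of the element $y_k$ itself. Moreover, unique $k$\dash divisibility of the root group is precisely Proposition~\ref{prop:ndivglobal}, which in the paper is \emph{derived from} this lemma (via Corollary~\ref{cor:ndiv}), so invoking it here is circular --- and it also requires abelian root groups, which are not assumed in this lemma. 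The paper avoids all of this by a direct computation: if $z\cdot(n+1)=x$, then transporting through $\mu_{-x}$ and using (\textit{iii}) gives $z\mu_{-x}=(z\cdot(n+1))\mu_{-x}\cdot(n+1)=x\mu_{-x}\cdot(n+1)=-x\cdot(n+1)$, which pins down $z$ exactly because $\mu_{-x}$ is a bijection. The second serious gap is (\textit{v}), which you explicitly leave unresolved: since $(x\cdot n)\cdot k=x\cdot(nk)$ and $nk>n$, this lies outside the hypothesis and is not ``bookkeeping''. The paper proves it by two separate contradiction arguments, rewriting $x\cdot(n+1)\cdot k\sim0$ as $x\cdot n\cdot k\sim-x\cdot k$, applying $\mu_x$ and the already-established parts (\textit{iii}) and (\textit{v}) at level $n$ to descend to $x\cdot(n+1)\sim0$.

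A structural point that would also derail your write-up: the five statements cannot be proven ``more or less in the order given''. In the paper everything is a single simultaneous induction on $n$, and already the proof of (\textit{i}) at level $n+1$ uses (\textit{iii}), (\textit{iv}) and (\textit{v}) at level $n$, together with a preliminary claim (that $y_n\cdot(n+1)\cdot k$ is a unit for all $k\leq n$) whose proof again needs (\textit{iii}) and (\textit{v}) of the induction hypothesis. Your sequential plan, where (\textit{i}) is established first by a ``telescoping'' of Lemma~\ref{lem:specialmu}\ref{itm:special-v}, does not have access to these ingredients and I do not see how the telescoping alone closes the induction. Finally, your argument for (\textit{ii}) via ``translation through $\overline{\tau}$'' is not right as stated: $\alpha_{x\tau}$ is not $\alpha_x^\tau$ (the latter is $\gamma_x\in U_0$); the correct mechanism is that $\mu_{-x}\tau$ is a Hua map, so $x\tau\cdot(n+1)=(-x)\cdot(n+1)\mu_{-x}\tau$, which is a unit because $x\cdot(n+1)$ is --- and note this already uses the case $k=n+1$ of the hypothesis, not just Proposition~\ref{pr:unit}.
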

\begin{proof}
	We prove all these statements simultaneously by induction on $n$. Observe that they clearly hold for $n=1$, using Proposition \ref{lem:specialmu}. We now assume that the lemma holds for $n$ and all $x$ satisfying the conditions, and prove it for $n+1$. Hence, we now assume $x\cdot k$ is a unit for $k\leq n+1$.

	We first claim the following:
	\begin{align}\label{eq:bn(n+1)k-unit}
		y_n\cdot(n+1)\cdot k\text{ is a unit for all $k\leq n$.}
	\end{align}
	Suppose this were not the case; then $y_n\cdot(n+1)\cdot k\sim 0$, so $x\cdot(n+1)\cdot k = y_n\cdot(n+1)\cdot k\cdot n\sim 0$. From this, we get $x\cdot n\cdot k\sim-x\cdot k$, and hence $(x\cdot n\cdot k)\mu_x\cdot k\sim(-x\cdot k)\mu_x\cdot k$. Using \ref{lem:ndiv-v} and \ref{lem:ndiv-iii} of the induction hypothesis, we get
	\[(x\cdot n)\mu_x \sim (-x)\mu_x \text{, so } x\cdot n\sim -x \text{ and hence } x\cdot(n+1)\sim 0\;,\]
	which contradicts the assumption.

	Now we prove \ref{lem:ndiv-i}. By induction, $(x\cdot k)\mu_{-x}\cdot k = -x$ for all $k\leq n$, so we only need to show this for $k=n+1$. We have
	\begin{align*}
		-(x\cdot(n+1))\mu_{-x} &= (-x\cdot(n+1))\mu_{-x} \\
		&= (-x\cdot n)\alpha_{-x}\mu_{-x} \\
		&=(-x\cdot n)\mu_{-x}\alpha_x\mu_{-x}\alpha_x & \text{(by Proposition \ref{lem:specialmu}\ref{itm:special-v})} \\
		&= y_n\alpha_x\mu_{-x}\alpha_x & \text{(by the induction hypothesis)} \\ 
		&= y_n\alpha_{y_n}^n\mu_{-x}\alpha_x \\
		&= (y_n\cdot(n+1))\mu_{-x}\alpha_x \\
		&= \bigl((y_n\cdot(n+1)\cdot n)\mu_{-x}\cdot n\bigr)\alpha_x & \text{(by the induction hypothesis and \eqref{eq:bn(n+1)k-unit})} \\ 
		&= \bigl((x\cdot(n+1))\mu_{-x}\cdot n\bigr)\alpha_x\;.
	\end{align*}
	Hence $\alpha_{(x\cdot(n+1))\mu_{-x}}^{-1} = \alpha_{(x\cdot(n+1))\mu_{-x}}^n\alpha_x$, so indeed
    \[ -x = 0\cdot \alpha_{-x} = 0\cdot\alpha_{(x\cdot(n+1))\mu_{-x}}^{n+1} = (x\cdot(n+1))\mu_{-x}\cdot(n+1)\;. \]
	Next, we prove \ref{lem:ndiv-ii}, where again, we only need to check that $x\tau\cdot(n+1)$ is a unit. By Lemma~\ref{prop:huaAut}, $(x\cdot n)\mu\tau = x\mu\tau\cdot n$ for any $\mu$-maps $\tau$ and $\mu$. Hence
	\[ x\tau\cdot(n+1) = (-x)\mu_{-x}\tau\cdot(n+1) = (-x)\cdot(n+1)\mu_{-x}\tau\nsim 0\;, \]
	as $x\cdot(n+1)\nsim 0$.

	Similarly, we prove \ref{lem:ndiv-iii} using \ref{lem:ndiv-i} and Lemma~\ref{prop:huaAut}.
	\[ (x\cdot k)\tau\cdot k = (x\cdot k)\mu_{-x}\mu_x\tau\cdot k = \bigl((x\cdot k)\mu_{-x}\cdot k\bigr)\mu_x\tau = -x\mu_x\tau = x\tau\;.  \]

	To prove \ref{lem:ndiv-iv}, we first observe that $y_{n+1} = (-x\cdot(n+1))\mu_{-x}$ indeed satisfies $y_{n+1}\cdot(n+1) = x$, by \ref{lem:ndiv-iii}. We first show the following statement.
	\begin{align}\label{eq:ck-unit}
		\text{If $z\cdot(n+1)=x$, then $z\cdot k$ is a unit for all $k\leq n+1$.}
	\end{align}
	Indeed, if $z\cdot k\sim 0$, then also $z\cdot k\cdot(n+1)\sim 0$, so $x\cdot k\sim 0$, contradicting the fact that $x\cdot k$ is a unit for all $k\leq n+1$.

	Now we prove that $y_{n+1}$ is unique. Suppose $z\cdot(n+1)=x$, then
	\begin{align*}
		-x\cdot(n+1) &= x\mu_{-x}\cdot (n+1) = (z\cdot(n+1))\mu_{-x}\cdot (n+1) = z\mu_{-x}\;,
	\end{align*}
	using \ref{lem:ndiv-iii} for $z$, which is allowed by \eqref{eq:ck-unit}. Hence, $z=y_{n+1}$, and indeed $y_{n+1}$ is unique. By~\eqref{eq:ck-unit}, $y_{n+1}\cdot k$ is a unit for $k\leq n+1$. We only need to show that $y_k\cdot(n+1)$ is a unit for $k\leq n$. Suppose $y_k\cdot(n+1)\sim 0$, then also $x\cdot(n+1) = y_k\cdot(n+1)\cdot k\sim 0$, which is a contradiction.

	It only remains to show \ref{lem:ndiv-v}, which we do in two steps. First, if $x\cdot(n+1)\cdot k\sim 0$ for some $k\leq n$, we would have
	\begin{align*}
		& x\cdot n\cdot k \sim -x\cdot k \\
		\implies& (x\cdot n\cdot k)\mu_x\cdot k \sim (-x\cdot k)\mu_x\cdot k \\
		\implies& (x\cdot n)\mu_x \sim (-x)\mu_x &\text{(by the induction hypothesis and \ref{lem:ndiv-iii})}\\
		\implies& x\cdot n \sim -x \\
		\implies& x\cdot(n+1)\sim 0,
	\end{align*}
	which is a contradiction; so $x\cdot(n+1)\cdot k$ is a unit for $k\leq n$.

	Now if $x\cdot(n+1)\cdot(n+1)\sim 0$, we would have
	\begin{align*}
		& -x\cdot(n+1)\cdot n \sim x\cdot(n+1) \\
		\implies& -(x\cdot(n+1)\cdot n)\mu_x\cdot n \sim (x\cdot(n+1))\mu_x\cdot n \\
		\implies& -(x\cdot(n+1))\mu_x \sim (x\cdot(n+1))\mu_x\cdot n &\text{(using the previous step)}\\
		\implies& (x\cdot(n+1))\mu_x\cdot(n+1)\sim 0 \\
		\implies& -x\sim 0 ,
	\end{align*}
	which is again a contradiction. This shows that $x\cdot(n+1)\cdot k$ is a unit for $k\leq n+1$.

	By induction, the lemma now holds for all $n$.
\end{proof}

The statement of the previous lemma is quite technical  in order to make the induction work.
The essence of it is contained in the following corollary.

\begin{corollary}\label{cor:ndiv}
	Let $\M$ be a special local Moufang set and assume $x\cdot k$ is a unit for all $k\leq n$.
	\begin{enumerate}[topsep=0pt]
		\item there is a unique $y$ such that $y\cdot n = x$, which we denote by $x\cdot\frac{1}{n}$;
		\item $(x\cdot n)\tau = x\tau\cdot\frac{1}{n}$ and $\bigl(x\cdot\frac{1}{n}\bigr)\tau = x\tau\cdot n$;
		\item if $z\sim x$, then $z\cdot k$ is a unit for all $k\leq n$ and $x\cdot\frac{1}{n}\sim z\cdot\frac{1}{n}$.
	\end{enumerate}
\end{corollary}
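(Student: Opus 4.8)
The plan is to read all three statements off Lemma~\ref{lem:ndiv}; the corollary is essentially a clean repackaging of it, and the hypothesis ``$x\cdot k$ is a unit for all $k\le n$'' is exactly the standing assumption there, so every conclusion of Lemma~\ref{lem:ndiv} is available for~$x$. For~(i) I would simply invoke Lemma~\ref{lem:ndiv}\ref{lem:ndiv-iv} with $k=n$: it states that $y_n=(-x\cdot n)\mu_{-x}$ is the unique element with $y_n\cdot n=x$, and we set $x\cdot\frac1n:=y_n$.

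For~(ii), note first that $x\tau$ is again a unit --- $\tau$ interchanges $0$ and $\infty$ and preserves $\lsim$, so $x\nsim0,\infty$ forces $x\tau\nsim\infty,0$ --- and that $x\tau\cdot k$ is a unit for all $k\le n$ by Lemma~\ref{lem:ndiv}\ref{lem:ndiv-ii}, so $x\tau\cdot\frac1n$ is defined by~(i) applied to~$x\tau$. Then Lemma~\ref{lem:ndiv}\ref{lem:ndiv-iii} with $k=n$ gives $(x\cdot n)\tau\cdot n=x\tau$, whence $(x\cdot n)\tau=x\tau\cdot\frac1n$ by the uniqueness in~(i). For the second identity, put $y:=x\cdot\frac1n$; by Lemma~\ref{lem:ndiv}\ref{lem:ndiv-iv}, $y\cdot k$ is a unit for all $k\le n$, so Lemma~\ref{lem:ndiv}\ref{lem:ndiv-iii} applied to $y$ gives $x\tau\cdot n=(y\cdot n)\tau\cdot n=y\tau$, i.e.\ $\bigl(x\cdot\frac1n\bigr)\tau=x\tau\cdot n$.

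For~(iii), assume $z\sim x$. By Lemma~\ref{lem:timesn_sim}, $z\cdot k\sim x\cdot k$ for each $k\le n$, and since being a unit ($\nsim0$ and $\nsim\infty$) only depends on the equivalence class, $z\cdot k$ is a unit for all $k\le n$; hence $w:=z\cdot\frac1n$ exists by~(i). To prove $x\cdot\frac1n\sim z\cdot\frac1n$, I would pass to the Moufang set $\overline{\M}=(\overline X,\{U_{\overline x}\})$. It is again \emph{special}: $\overline{-u}=-\overline u$ and $\overline{u\tau}=\overline u\,\overline\tau$ with $\overline\tau$ a $\mu$-map of $\overline{\M}$, so the relation $(-u)\tau=-(u\tau)$ valid in $\M$ projects to the defining relation of specialness in $\overline{\M}$. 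Moreover $\overline{u\cdot k}=\overline u\cdot k$ and units project to units, so $\overline x\cdot k$ is a unit in $\overline{\M}$ for all $k\le n$; then $\overline{x\cdot\frac1n}$ and $\overline{z\cdot\frac1n}$ are both elements of $\overline X$ whose $n$-th multiple equals $\overline x=\overline z$, so the uniqueness part of Lemma~\ref{lem:ndiv}\ref{lem:ndiv-iv} applied inside $\overline{\M}$ forces them to be equal, i.e.\ $x\cdot\frac1n\sim z\cdot\frac1n$.

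The only step beyond routine bookkeeping is this last one: one must check that $\overline{\M}$ is a special Moufang set and that $x\mapsto x\cdot k$ and the notion of unit are compatible with the projection $X\to\overline X$, so that Lemma~\ref{lem:ndiv} can legitimately be reused there; I expect this (minor) compatibility check to be the main obstacle. Should one prefer to stay inside $\M$, one can instead use the explicit formula $x\cdot\frac1n=(-(x\cdot n))\mu_{-x}$ from Lemma~\ref{lem:ndiv}\ref{lem:ndiv-iv}: from $x\sim z$ one gets $-(x\cdot n)\sim-(z\cdot n)$, hence $(-(x\cdot n))\mu_{-x}\sim(-(z\cdot n))\mu_{-x}$, and it then suffices to observe that $\overline{\mu_{-x}}=\overline{\mu_{-z}}$ because the induced $\mu$-map depends only on the class $\overline{-x}=\overline{-z}$.
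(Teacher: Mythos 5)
Your parts (\textit{i}) and (\textit{ii}) coincide with the paper's proof: (\textit{i}) is Lemma~\ref{lem:ndiv}\ref{lem:ndiv-iv}, and (\textit{ii}) combines Lemma~\ref{lem:ndiv}\ref{lem:ndiv-ii}--\ref{lem:ndiv-iii} with the uniqueness from (\textit{i}), exactly as in the paper. In (\textit{iii}) the unit claim is also handled identically via Lemma~\ref{lem:timesn_sim}, but for $x\cdot\frac{1}{n}\sim z\cdot\frac{1}{n}$ the paper takes a shorter route that you did not consider: by (\textit{ii}), $x\cdot\frac{1}{n}=(x\tau^{-1}\cdot n)\tau$, and since $\tau^{\pm1}$ preserves the equivalence and $\cdot\,n$ preserves it by Lemma~\ref{lem:timesn_sim}, the chain $x\sim z\implies x\tau^{-1}\cdot n\sim z\tau^{-1}\cdot n\implies x\cdot\frac{1}{n}\sim z\cdot\frac{1}{n}$ is immediate. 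Both of your alternatives are workable but each rests on a compatibility fact the paper never states: that $\overline{\M}$ is a special Moufang set in which Lemma~\ref{lem:ndiv} may be reused (with $\overline{u\cdot k}=\overline{u}\cdot k$ and $\overline{\mu_u}=\mu_{\overline{u}}$), or, in your second variant, that $\overline{\mu_{-x}}=\overline{\mu_{-z}}$ whenever $x\sim z$. These do hold --- by \ref{itm:LM2'} and the factorisation $\mu_u=g\alpha_u h$ of Proposition~\ref{pr:mu}, each of the three factors for $u$ and for $v$ induces the same permutation of $\overline{X}$ when $u\sim v$ --- so neither route is broken; they simply cost an extra verification that the paper's appeal to part (\textit{ii}) avoids entirely. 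If you keep your version, write out that verification rather than leaving it as an ``observation''.
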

\begin{proof}\leavevmode
	\begin{enumerate}
		\item By Lemma~\ref{lem:ndiv}\ref{lem:ndiv-iv}, $y:=(-x\cdot n)\mu_{-x}$ is the unique element satisfying $y\cdot n=x$.
		\item Lemma~\ref{lem:ndiv}\ref{lem:ndiv-iii} gives us $(x\cdot n)\tau\cdot n = x\tau$, so $(x\cdot n)\tau = x\tau\cdot\frac{1}{n}$. By Lemma~\ref{lem:ndiv}\ref{lem:ndiv-iv}, $x\cdot\frac{1}{n}$ also satisfies the conditions of Lemma~\ref{lem:ndiv}, so we have $\bigl(\bigl(x\cdot\frac{1}{n}\bigr)\cdot n\bigr)\tau\cdot n = \bigl(x\cdot\frac{1}{n}\bigr)\tau$, hence $x\tau\cdot n = \bigl(x\cdot\frac{1}{n}\bigr)\tau$.
		\item By Lemma~\ref{lem:timesn_sim}, $z\cdot k \sim x\cdot k\nsim 0$ for all $k\leq n$. Now we have
		\begin{align*}
			x\sim z &\implies x\tau^{-1}\sim z\tau^{-1}\implies x\tau^{-1}\cdot n\sim z\tau^{-1}\cdot n \\
			&\implies (x\tau^{-1}\cdot n)\tau\sim (z\tau^{-1}\cdot n)\tau\implies x\cdot\tfrac{1}{n}\sim z\cdot\tfrac{1}{n}\qedhere
		\end{align*}
	\end{enumerate}
\end{proof}

\subsection{Special local Moufang sets with abelian root groups}

In this subsection, we will assume we have a special local Moufang set with $U_\infty$ abelian. Since all root groups are conjugate in the little projective group, this means all root groups are abelian.

\begin{proposition}\label{prop:mu-involution}
	Let $x$ and $y$ be units in a special local Moufang set with $U_\infty$ abelian. Then
	\begin{enumerate}
		\item $\mu_x = \mu_{-x} = \mu_x^{-1}$, so $\mu_x^2 = \id$;\label{itm:mu-involution}
		\item $\mu_x^{\mu_y} = \mu_{x\mu_y}$;
		\item if $x\alpha_y$ is a unit, then $\mu_x\mu_{x\alpha_y}\mu_y = \mu_y\mu_{x\alpha_y}\mu_x = \mu_{(x\tau\alpha_{y\tau})\tau}$. \label{itm:mucommute}
	\end{enumerate}
\end{proposition}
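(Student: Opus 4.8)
The plan is to prove~(i) first, deduce~(ii) from it almost immediately, and obtain~(iii) by specialising Proposition~\ref{prop:sumform}(iii).

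\emph{Part (i).} Since $\mu_{-x}=\mu_x^{-1}$ by Lemma~\ref{prop:mu}, it suffices to show $\mu_x^{2}=\id$; the equalities $\mu_x=\mu_x^{-1}=\mu_{-x}$ then follow. Using Lemma~\ref{lem:specialmu}\ref{itm:special-iv} with both sign choices and rewriting $\mu_{-x}$ as $\mu_x^{-1}$ gives
\[
  \mu_x=\alpha_x\,\mu_x^{-1}\alpha_x\mu_x\,\alpha_x=\alpha_x\,\mu_x\alpha_x\mu_x^{-1}\,\alpha_x,
\]
so that $\mu_x^{-1}\alpha_x\mu_x=\mu_x\alpha_x\mu_x^{-1}$; equivalently $\mu_x^{2}$ commutes with $\alpha_x$. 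As $\mu_x^{2}$ is a product of two $\mu$-maps it is a Hua map, hence $\alpha_v^{\mu_x^{2}}=\alpha_{v\mu_x^{2}}$ for all $v\in X\setminus\overline\infty$ by Lemma~\ref{prop:huaAut}; in particular $\mu_x^{2}$ fixes $0,\infty,x$ and $-x$, and it fixes a point $v\nsim\infty$ exactly when it centralises $\alpha_v$. It remains to see that the centraliser of $\mu_x^{2}$ in $U_\infty$ is all of $U_\infty$ (which forces $\mu_x^{2}=\id$); this is where the commutativity of $U_\infty$ enters essentially, via the ``additive'' identity $\alpha_u\alpha_v=\alpha_{u\alpha_v}$, together with the relation $\alpha_x\mu_x\alpha_x=\mu_x\alpha_{-x}\mu_x$ extracted from Lemma~\ref{lem:specialmu}\ref{itm:special-v} (again after $\mu_{-x}=\mu_x^{-1}$) and the sum-type formula of Lemma~\ref{lem:specialsum}, which let one match the actions of $\mu_x$ and $\mu_{-x}$ on an arbitrary point. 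I expect this last step to be the main obstacle.

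\emph{Part (ii).} Granting~(i) this is short: $x\mu_y$ is a unit by Lemma~\ref{prop:mu}, so $\mu_{x\mu_y}$ is defined, and applying Lemma~\ref{prop:mu}\ref{itm:mutau} with $\mu_y$ in place of the base $\mu$-map $\tau$ --- legitimate because $\mu$-maps are independent of that choice (Lemma~\ref{prop:mu}\ref{itm:muindep}) --- gives $\mu_{x\mu_y}=\mu_{-x}^{\mu_y}$; now~(i) replaces $\mu_{-x}$ by $\mu_x$, yielding $\mu_{x\mu_y}=\mu_x^{\mu_y}$.

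\emph{Part (iii).} First observe that, for units $x,y$, the hypothesis that $x\alpha_y$ is a unit is equivalent to $-x\nsim y$: indeed $x\alpha_y\sim0\iff x\sim-y\iff-x\sim y$. So Proposition~\ref{prop:sumform} may be applied with its ``$x$'' replaced by $-x$; using $\tau=\tau^{-1}$ (from~(i)) we may take the element $z$ there to be $z:=(-x)\tau\,\alpha_{-(y\tau)}\,\tau$, and part~(iii) of that proposition reads
\[
  \mu_y\,\mu_z\,\mu_x=\mu_{y\alpha_x}=\mu_{x\alpha_y},
\]
the last equality because $\alpha_x\alpha_y=\alpha_y\alpha_x$ ($U_\infty$ abelian) forces $y\alpha_x=x\alpha_y$. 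By~(i) every $\mu$-map is an involution, so from this identity $\mu_z=\mu_y\mu_{x\alpha_y}\mu_x$, and taking inverses $\mu_z=\mu_z^{-1}=\mu_x\mu_{x\alpha_y}\mu_y$; hence $\mu_x\mu_{x\alpha_y}\mu_y=\mu_y\mu_{x\alpha_y}\mu_x=\mu_z$. Finally one computes $z$: from $\alpha_{-v}=\alpha_v^{-1}$, specialness, and $\alpha_{x\tau}\alpha_{y\tau}=\alpha_{(x\tau)\alpha_{y\tau}}$ one gets $(-x)\tau\,\alpha_{-(y\tau)}=-(x\tau\alpha_{y\tau})$, hence $z=-\bigl((x\tau\alpha_{y\tau})\tau\bigr)$, and therefore $\mu_z=\mu_{(x\tau\alpha_{y\tau})\tau}$ by~(i). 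This is the asserted identity (and $x\tau\alpha_{y\tau}$ is a unit because $z$ is). Once~(i) is in hand this part is entirely a matter of carefully re-indexing Proposition~\ref{prop:sumform}(iii); no further difficulty is expected.
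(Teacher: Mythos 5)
Your parts (ii) and (iii) are correct and essentially follow the paper's own route. Part (ii) is exactly the paper's one-line deduction from Lemma~\ref{prop:mu}\ref{itm:mutau} together with part (i). In part (iii) you re-index Proposition~\ref{prop:sumform}(\textit{iii}) just as the paper does (replacing $x$ by $-x$ and using $\tau=\tau^{-1}$); the only cosmetic difference is that you obtain the second ordering $\mu_x\mu_{x\alpha_y}\mu_y$ by inverting a product of involutions, whereas the paper interchanges $x$ and $y$ and uses that $z$ is symmetric in them --- both variants rest on the commutativity of $U_\infty$ and are equally valid. Your observations that ``$x\alpha_y$ is a unit'' is equivalent to ``$-x\nsim y$'', and that $z=-\bigl((x\tau\alpha_{y\tau})\tau\bigr)$, are also correct.

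The genuine gap is part (i), on which (ii) and (iii) both depend. You correctly reduce the claim to $\mu_x^2=\id$, correctly extract from Lemma~\ref{lem:specialmu}\ref{itm:special-iv} (with both sign choices) that $\mu_x^2$ commutes with $\alpha_x$, and correctly note via Lemma~\ref{prop:huaAut} that $\mu_x^2=\id$ would follow once $\mu_x^2$ centralises \emph{all} of $U_\infty$, equivalently once $v\mu_x^2=v$ for every $v\nsim\infty$. But that last step is precisely the substance of the statement, and you supply no argument for it: knowing that the Hua map $\mu_x^2$ fixes $0$, $\infty$, $x$ and $-x$ does not by itself force it to fix every point, and the identities you list (Lemma~\ref{lem:specialmu}\ref{itm:special-v}, Lemma~\ref{lem:specialsum}) are named but not combined into a proof. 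You flag this yourself as ``the main obstacle''. Note that the paper does not prove (i) here at all --- it quotes it as \cite[Lemma~5.8]{DMRijckenPSL}, where the actual computation lives. As written, your proposal therefore proves (ii) and (iii) only conditionally on (i), and (i) itself remains open.
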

\begin{proof}\leavevmode
	\begin{enumerate}
		\item This is \cite[Lemma 5.8]{DMRijckenPSL}.
		\item By Lemma~\ref{prop:mu}\ref{itm:mutau}, we have $\mu_{x\mu_y} = \mu_{-x}^{\mu_y}$, so this follows from~\ref{itm:mu-involution}.
		\item Let $z = x\tau\alpha_{y\tau}\tau$. Then, by Proposition~\ref{prop:sumform}, we have $\mu_{-y}\mu_z\mu_{-x} = \mu_{(-x)\alpha_{-y}}$, so by \ref{itm:mu-involution} we get $\mu_z = \mu_x\mu_{y\alpha_x}\mu_y$. If we interchange $x$ and $y$, $z$ remains the same by the commutativity of $U_{\infty}$, so we also get $\mu_z = \mu_y\mu_{x\alpha_y}\mu_x$. By the commutativity of $U_{\infty}$ again, $x\alpha_y = 0\alpha_x\alpha_y = y\alpha_x$.\qedhere
	\end{enumerate}
\end{proof}

When a special local Moufang set has abelian root groups, we can extend Corollary~\ref{cor:ndiv} to non-units in the sense that the root groups will be uniquely $n$-divisible.

\begin{definition}
	A group $U$ is uniquely $k$-divisible if for every $g\in U$ there is a unique $h\in U$ such that $h^k=g$ (or such that $h\cdot k = g$ if we write the group operation additively). We denote $h$ as $g/k$, $g \cdot \frac{1}{k}$ or $g \cdot k^{-1}$.
\end{definition}

\begin{proposition}\label{prop:ndivglobal}
	Let $\M$ be a special local Moufang set with $U_\infty$ abelian, and $n\in\N$ a natural number. If for all units $x$ and all $k\leq n$, $x\cdot k$ is also a unit, then $U_\infty$ is uniquely $k$-divisible for all $k\leq n$.
\end{proposition}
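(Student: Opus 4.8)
The plan is to recast the statement in terms of \emph{point} divisibility. By \ref{itm:LM2} the map $x \mapsto \alpha_x$ is a bijection $X \setminus \overline{\infty} \to U_\infty$, and one checks it carries the operation $y \mapsto y\cdot k$ on points to the $k$-th power map of $U_\infty$: indeed $\alpha_y^k$ lies in $U_\infty$ and sends $0$ to $0\cdot\alpha_y^k = y\cdot k$, so $\alpha_y^k = \alpha_{y\cdot k}$. Hence it suffices to prove that \emph{for every $x \in X\setminus\overline{\infty}$ and every $k \le n$ there is a unique $y \in X\setminus\overline{\infty}$ with $y\cdot k = x$}. Inside $X\setminus\overline{\infty}$ the non-units are precisely the points $x \sim 0$; this is exactly the part of $U_\infty$ that Corollary~\ref{cor:ndiv} does not reach, since that corollary only deals with units.

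For existence I would split into two cases. If $x$ is a unit, then by hypothesis $x\cdot j$ is a unit for all $j\le n$, so Corollary~\ref{cor:ndiv}(i), applied with $n$ replaced by $k$, already produces $y = x\cdot\tfrac1k$. If $x\sim 0$, fix any unit $u$ (one exists since $\abs{\overline{X}}>2$) and note that $x\alpha_u\sim 0\alpha_u = u$, so $x\alpha_u$ is again a unit. By the first case both $u$ and $x\alpha_u$ have $k$-th roots, say $v\cdot k = u$ and $w\cdot k = x\alpha_u$; since $U_\infty$ is abelian, the point $y := 0\alpha_w\alpha_v^{-1}$ satisfies $\alpha_y^k = (\alpha_w\alpha_v^{-1})^k = \alpha_w^k\alpha_v^{-k} = \alpha_{x\alpha_u}\alpha_u^{-1}$, whence $y\cdot k = (x\alpha_u)\alpha_u^{-1} = x$. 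In words: every point is a ``difference'' of units, and units are $k$-divisible.

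The real content is uniqueness. Since $U_\infty$ is abelian, $y\mapsto y\cdot k$ corresponds to the endomorphism $g\mapsto g^k$, so its injectivity is equivalent to $U_\infty$ having no nonzero element killed by $k$, for $k\le n$. The crucial observation is that the uniqueness clause of Lemma~\ref{lem:ndiv}(iv) is stated for \emph{all} of $X\setminus\overline{\infty}$, not merely for units. So suppose some $t\in X\setminus\overline{\infty}$ has $t\ne 0$ but $t\cdot k = 0$. Pick a unit $u$; by hypothesis $u\cdot j$ is a unit for all $j\le n$, so Lemma~\ref{lem:ndiv}(iv) provides a \emph{unique} $z$ with $z\cdot k = u$. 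But the point $z' := 0\alpha_z\alpha_t$ also satisfies $z'\cdot k = 0\alpha_z^k\alpha_t^k = 0\alpha_u = u$ (using commutativity together with $\alpha_z^k = \alpha_u$ and $\alpha_t^k = \alpha_{t\cdot k} = \id$), while $z'\ne z$ because $\alpha_t\ne\id$; this contradicts the uniqueness of $z$. Hence no such $t$ exists, $y\mapsto y\cdot k$ is injective, and together with existence this shows $U_\infty$ is uniquely $k$-divisible for all $k\le n$.

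I expect the first paragraph to be routine bookkeeping and the existence step to be short once Corollary~\ref{cor:ndiv} is available. The one genuinely delicate point is the uniqueness for non-units, i.e.\ the absence of $k$-torsion in $U_\infty$: the argument above succeeds only because Lemma~\ref{lem:ndiv}(iv) was formulated with \emph{absolute} uniqueness, so that any nonzero $k$-torsion element immediately manufactures a second $k$-th root of a unit $u$ and contradicts it. Had that lemma only asserted uniqueness among units, this step would require more work.
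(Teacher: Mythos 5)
Your proposal is correct and takes essentially the same route as the paper: existence for a non-unit $x$ by writing $\alpha_x$ as a product of two $\alpha$'s of units and using commutativity of $U_\infty$, and uniqueness by transporting any failure back to a second $k$-th root of a unit, which contradicts the absolute uniqueness (over all of $X\setminus\overline{\infty}$, not just units) guaranteed by Lemma~\ref{lem:ndiv} and Corollary~\ref{cor:ndiv}. Your observation that this absolute uniqueness is the load-bearing point is exactly right — the paper's own uniqueness step ($(\alpha_y^{-1}u)^k=\alpha_e=(\alpha_y^{-1}u')^k$, hence $u=u'$) relies on it in the same way, merely phrasing the reduction as a translation of the two candidate roots rather than as the absence of $k$-torsion.
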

\begin{proof}
	Let $k\leq n$. Corollary~\ref{cor:ndiv} already shows that, if $x$ is a unit, there is a unique $y$ such that $y\cdot k=x$; therefore, it only remains to check the unique $k$-divisibility for non-units.
	So suppose that $x$ is not a unit. Take any unit $e$; then $\alpha_x = \alpha_{x\alpha_{-e}}\alpha_{e}$. Now $x\alpha_{-e}$ and $e$ are units, so both $\alpha_{x\alpha_{-e}}$ and $\alpha_e$ are uniquely $k$-divisible, say with $y\cdot k = x\alpha_{-e}$ and $z\cdot k = \alpha_e$. Since $U_\infty$ is abelian, we get
	\[(\alpha_y\alpha_z)^k = \alpha_y^k\alpha_z^k = \alpha_{x\alpha_{-e}}\alpha_{e} = \alpha_x\,.\]
	To show uniqueness, suppose there are two elements $u,u' \in U_\infty$ with $u^k = u'^k = \alpha_x$. Then
	\[(\alpha_y^{-1}u)^k = \alpha_y^{-k}\alpha_x = \alpha_y^{-k}\alpha_y^k\alpha_z^k = \alpha_e\,,\]
	and similarly $(\alpha_y^{-1}u')^k = \alpha_e$. By the uniqueness for units, we get $\alpha_y^{-1}u' = \alpha_y^{-1}u$, so $u=u'$.
\end{proof}

\begin{proposition}\label{prop:mu-x.s-part}
	Let $\M$ be a special local Moufang set with $U_\infty$ abelian, and $n\in\N$ a natural number. Assume that for all units $x$ and all $k\leq n$, $x\cdot k$ is also a unit. Then we have $y\mu_{x\cdot\ell} = y\mu_x\cdot\ell^2$ for all units $x,y$ and for $\ell\in\{n,n^{-1}\}$.
\end{proposition}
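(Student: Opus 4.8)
The plan is to work throughout in the abelian group $(X\setminus\overline\infty,+)$ obtained by identifying $X\setminus\overline\infty$ with $U_\infty$ via $v\mapsto\alpha_v$, so that $0$ is the neutral element, $v\cdot k=0\alpha_v^k$, and $-v$ agrees with the earlier notation; I would use freely that $\mu_v^2=\id$ and $\mu_{-v}=\mu_v$ (Proposition~\ref{prop:mu-involution}\ref{itm:mu-involution}), that $(-y)\mu_v=-(y\mu_v)$ for units (Lemma~\ref{lem:specialmu}\ref{itm:special-i}), and that $U_\infty$ is uniquely $k$\dash divisible for all $k\le n$, hence also uniquely $n^2$\dash divisible (Proposition~\ref{prop:ndivglobal}). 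The first step is to reduce the case $\ell=n^{-1}$ to the case $\ell=n$: since the global hypothesis holds, $x':=x\cdot\tfrac1n$ is a unit with $x'\cdot k$ a unit for all $k\le n$ (Corollary~\ref{cor:ndiv}, Lemma~\ref{lem:ndiv}\ref{lem:ndiv-iv}), and the case $\ell=n$ applied to $x'$ reads $y\mu_x=y\mu_{x'\cdot n}=(y\mu_{x'})\cdot n^2=(y\mu_{x\cdot\frac1n})\cdot n^2$; dividing by $n^2$ yields the case $\ell=n^{-1}$. So it remains to prove $y\mu_{x\cdot n}=(y\mu_x)\cdot n^2$ for all units $x,y$, and we may assume $n\ge2$.

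Next I would settle the subcase $y=x$. Lemma~\ref{lem:specialsum}, rewritten inside $(X\setminus\overline\infty,+)$ and simplified using commutativity of $U_\infty$, states: for units $a,b$ with $a+b$ a unit, $a\mu_{a+b}=a\mu_b-a-2b$. Taking $a=x$ and $b=x\cdot(n-1)$ — allowed because $x$, $x\cdot(n-1)$ and $x\cdot n$ are all units — gives $x\mu_{x\cdot n}=x\mu_{x\cdot(n-1)}-x\cdot(2n-1)$; together with $x\mu_x=-x$ (Lemma~\ref{lem:specialmu}\ref{itm:special-iii}), a telescoping induction yields $x\mu_{x\cdot n}=-x-\sum_{k=2}^{n}x\cdot(2k-1)=-x\cdot n^2=(x\mu_x)\cdot n^2$. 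Note in particular that $x\cdot n^2=-(x\mu_{x\cdot n})$ is again a unit, for every unit $x$.

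For general $y$ I would pass to $\beta_x:=\mu_x\mu_{x\cdot n}$. It fixes $0$ and $\infty$, and since $\mu_x$ and $\mu_{x\cdot n}$ each interchange $U_0$ and $U_\infty$ (Construction~\ref{constr:MUtau}, Theorem~\ref{thm:constrMouf}) it normalises $U_\infty$; being a product of two $\mu$-maps it satisfies $\alpha_v^{\beta_x}=\alpha_{v\beta_x}$ (Lemma~\ref{prop:huaAut}), so $v\mapsto v\beta_x$ is a group automorphism of $(X\setminus\overline\infty,+)$. For a unit $y$ one has $y\mu_{x\cdot n}=(y\mu_x)\mu_x\mu_{x\cdot n}=(y\mu_x)\beta_x$ (using $\mu_x^2=\id$), and $y\mapsto y\mu_x$ is a bijection of the units (Lemma~\ref{prop:mu}); hence the statement to be proved is equivalent to: $z\beta_x=z\cdot n^2$ for every unit $z$. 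Now the subcase $y=x$, applied with $z$ in the role of $x$, says exactly $z\beta_z=z\cdot n^2$. So the proof would be complete once one shows that $\beta_x$ does not depend on the unit $x$: then $z\beta_x=z\beta_z=z\cdot n^2$ for every unit $z$. (The automorphism property of $\beta_x$, together with the fact that the units generate $(X\setminus\overline\infty,+)$ — any $p\sim0$ equals $(p+e)+(-e)$ for a unit $e$ with $p+e$ a unit — shows that this independence is in fact equivalent to the full statement.)

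The genuinely hard point is therefore the independence of $\beta_x=\mu_x\mu_{x\cdot n}$ from $x$, equivalently $\mu_{x\cdot n}\mu_{y\cdot n}=\mu_x\mu_y$ for all units $x,y$. I would approach it through the three\dash$\mu$ identity $\mu_x\mu_{x\alpha_y}\mu_y=\mu_{(x\tau\alpha_{y\tau})\tau}$ of Proposition~\ref{prop:mu-involution}\ref{itm:mucommute}, which applies both to a pair of units $x,y$ with $x\alpha_y$ a unit and, thanks to the global hypothesis, to the scaled pair $x\cdot n,\,y\cdot n$; by Corollary~\ref{cor:ndiv} the two ``parallel sums'' are related by $\bigl((x\cdot n)\tau\,\alpha_{(y\cdot n)\tau}\bigr)\tau=\bigl((x\tau\alpha_{y\tau})\tau\bigr)\cdot n$, so the scaled identity is obtained from the original by the substitution $w\mapsto w\cdot n$ in every index. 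Combining this with the conjugation rule $\mu_a^{\,h}=\mu_{ah}$ for Hua maps $h$ (iterating Proposition~\ref{prop:mu-involution}(ii), and $(x\cdot n)h=(xh)\cdot n$ from Lemma~\ref{prop:huaAut}) and feeding the relations back into one another along a descent should propagate $\beta_x=\beta_{x\alpha_y}$ through all units. I expect the genuinely delicate part to be keeping the $n$\dash divisibility bookkeeping (Lemma~\ref{lem:ndiv}, Corollary~\ref{cor:ndiv}) consistent throughout this propagation.
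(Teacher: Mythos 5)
Your reduction is sound as far as it goes: the passage from $\ell=n^{-1}$ to $\ell=n$ via unique $n^2$-divisibility is fine, the additive rewriting of Lemma~\ref{lem:specialsum} as $a\mu_{a+b}=a\mu_b-a-2b$ and the telescoping sum $\sum_{k=2}^{n}(2k-1)=n^2-1$ correctly give $x\mu_{x\cdot n}=-x\cdot n^2$, and the observation that the full statement is equivalent to ``$\beta_x:=\mu_x\mu_{x\cdot n}$ acts as multiplication by $n^2$ on units'', hence would follow from the independence of $\beta_x$ from $x$, is a legitimate reformulation. But the proof stops exactly at the point you yourself call ``the genuinely hard point''. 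The claim $\mu_x\mu_{x\cdot n}=\mu_y\mu_{y\cdot n}$ for all units $x,y$ is never established: what you offer is a plan (``should propagate'', ``I expect the delicate part to be\dots''), and the plan does not obviously close. If you write the scaled three-$\mu$ identity as $\mu_{x\cdot n}\mu_{(x+y)\cdot n}\mu_{y\cdot n}=\mu_{(x\boxplus y)\cdot n}$ and substitute $\mu_{z\cdot n}=\mu_z\beta_z$, the three unknown automorphisms $\beta_x,\beta_{x+y},\beta_y$ sit interleaved between $\mu$-maps; commuting them past the $\mu$'s via $\mu_z^{\beta_x}=\mu_{z\beta_x}$ reintroduces $\beta_x$ into the indices, so the relation does not visibly collapse to $\beta_x=\beta_{x+y}=\beta_y$. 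This is a genuine gap, not a bookkeeping issue: the independence of $\beta_x$ is essentially equivalent to the proposition itself, so reducing to it without a proof is circular in effect.

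For comparison, the paper avoids this entirely by a short direct computation: it expands $\mu_{x\cdot n}=\alpha_{x\cdot n}\tau\alpha_{-(x\cdot n)\tau}\tau\alpha_{x\cdot n}$ (Lemma~\ref{lem:specialmu}\ref{itm:special-ii}) and pushes the scalar through each factor one at a time, using commutativity of $U_\infty$ for the $\alpha$'s (so $y\alpha_{x\cdot n}=\bigl(\bigl(y\cdot\tfrac1n\bigr)\alpha_x\bigr)\cdot n$) and Corollary~\ref{cor:ndiv} for $\tau$ (so $(z\cdot n)\tau=z\tau\cdot\tfrac1n$), arriving at $y\mu_{x\cdot n}=\bigl(y\cdot\tfrac1n\bigr)\mu_x\cdot n = y\mu_x\cdot n^2$ in six lines. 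If you want to salvage your structure, the most economical fix is to prove $z\beta_x=z\cdot n^2$ for each unit $z$ directly by this kind of factor-by-factor computation rather than via independence in $x$; as it stands, the argument is incomplete.
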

\begin{proof}
	Let $y$ be a unit. Then
	\begin{align*}
		y\mu_{x\cdot n} &= y\alpha_{x\cdot n}\tau\alpha_{-(x\cdot n)\tau}\tau\alpha_{x\cdot n} \\
		&= \bigl(\bigl(y\cdot\tfrac{1}{n}\bigr)\alpha_x\cdot n\bigr)\tau\alpha_{-(x\cdot n)\tau}\tau\alpha_{x\cdot n} \\
		&= \bigl(\bigl(y\cdot\tfrac{1}{n}\bigr)\alpha_x\tau\cdot\tfrac{1}{n}\bigr)\alpha_{-x\tau\cdot\tfrac{1}{n}}\tau\alpha_{x\cdot n} \\
		&= \bigl(\bigl(y\cdot\tfrac{1}{n}\bigr)\alpha_x\tau\alpha_{-x\tau}\cdot\tfrac{1}{n}\bigr)\tau\alpha_{x\cdot n} \\
		&= \bigl(\bigl(y\cdot\tfrac{1}{n}\bigr)\alpha_x\tau\alpha_{-x\tau}\tau\cdot n\bigr)\alpha_{x\cdot n} \\
		&= \bigl(y\cdot\tfrac{1}{n}\bigr)\alpha_x\tau\alpha_{-x\tau}\tau\alpha_{x}\cdot n \\
		&= y\cdot\tfrac{1}{n}\mu_x\cdot n = x\mu_x\cdot n^2\;.
	\end{align*}
	Substituting $x$ by $x\cdot \tfrac{1}{n}$, we get
	\[y\mu_x = y\mu_{x\cdot\tfrac{1}{n}}\cdot n^2\text{, so }y\mu_{x\cdot\tfrac{1}{n}} = y\mu_x\cdot\tfrac{1}{n^2}\;,\]
	hence $y\mu_{x\cdot\ell} = y\mu_x\cdot\ell^2$ for both values of $\ell$.
\end{proof}

We would now like to know what the $y\mu_{x\cdot\ell}$ is when $y$ is not a unit. Of course, if $y\sim 0$, we get $y\mu_x \sim \infty$, so it would not make sense to compare $y\mu_{x\cdot\ell}$ to $y\mu_x \cdot\ell^2$, since the second expression does not make sense. To resolve this, we also use the `multiplication by $n$' for $U_0$.

\begin{definition}
	For $x\in X\setminus\overline{0}$ and $n\geq1$, we define $x\cdottil n:=\infty\cdot\gamma_{x\tau^{-1}}^n$.
\end{definition}
\begin{remark}
	Even though $\tau$ appears in $\gamma_{x\tau^{-1}}$, remember that this is the unique element of $U_0$ mapping $\infty$ to $x$, which, therefore, does not depend on the choice of $\tau$. Hence $\cdottil n$ is also independent of this choice.
\end{remark}

Since this is exactly what we get when we switch the role of $0$ and $\infty$, we immediately know that Corollary~\ref{cor:ndiv} and Proposition~\ref{prop:ndivglobal} also hold for $\cdottil$. The first thing we can observe is that there is a close relation between $\cdot$ and $\cdottil$ :

\begin{lemma}\leavevmode
	\begin{enumerate}
		\item If $x\nsim\infty$, then $(x\cdot n)\tau = x\tau\cdottil n$. \label{itm:xnt}
		\item If $x\nsim0$, then $(x\cdottil n)\tau = x\tau\cdot n$.
	\end{enumerate}
\end{lemma}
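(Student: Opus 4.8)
The plan is to unwind both identities by expressing everything through the maps $\alpha_x$ and conjugation by $\tau$, using the relation $\gamma_x = \alpha_x^\tau = \tau^{-1}\alpha_x\tau$ together with the elementary facts $\infty\tau^{-1} = 0$, $0\tau = \infty$ and $0\tau^2 = 0$ (all immediate from $\infty\tau = 0$ and $\infty\tau^2 = \infty$), plus the fact that $\overline{\tau}$ interchanges $\overline{0}$ and $\overline{\infty}$, which is exactly what guarantees that the expressions on both sides are defined.

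For (i) I would fix $x\nsim\infty$ (so that $x\tau\nsim 0$ and the right-hand side makes sense), use $(x\tau)\tau^{-1} = x$ and $\gamma_x^{\,n} = (\tau^{-1}\alpha_x\tau)^n = \tau^{-1}\alpha_x^{\,n}\tau$, and then simply compute
\[
x\tau\cdottil n \;=\; \infty\cdot\gamma_x^{\,n} \;=\; \infty\tau^{-1}\cdot\alpha_x^{\,n}\cdot\tau \;=\; 0\cdot\alpha_x^{\,n}\cdot\tau \;=\; (x\cdot n)\tau .
\]
Nothing beyond these formal manipulations is needed here.

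For (ii) I would fix $x\nsim 0$, so that $x\tau^{-1}\nsim\infty$ and $x\tau\nsim\infty$ and both sides are defined; then, setting $y := x\tau^{-1}$ and running the same computation, $(x\cdottil n)\tau = (\infty\cdot\gamma_y^{\,n})\tau = 0\cdot\alpha_y^{\,n}\tau^2$, and one is left with a stray $\tau^2$. This is the one point that is not purely bookkeeping, and it is where I expect the only real work: I would clear it using Lemma~\ref{prop:huaAut} with both $\mu$-maps taken equal to $\tau$, which gives $\alpha_y^{\tau^2} = \alpha_{y\tau^2}$, hence $\alpha_y^{\,n}\tau^2 = \tau^2\alpha_{y\tau^2}^{\,n}$; since $0\tau^2 = 0$ and $y\tau^2 = x\tau$, this yields $0\cdot\alpha_y^{\,n}\tau^2 = 0\cdot\alpha_{x\tau}^{\,n} = x\tau\cdot n$, which is (ii). (Alternatively (ii) follows from (i) by applying the latter with the $\mu$-map $\tau^{-1}$ in place of $\tau$ and substituting $x\mapsto x\tau$, but since $\tau$ has been fixed once and for all I would prefer the self-contained Hua-map argument.) The remaining care throughout is only in checking which points lie in $\overline{0}$ or $\overline{\infty}$, which is routine from the action of $\overline{\tau}$.
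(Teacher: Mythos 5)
Your proof of (i) is exactly the paper's: both unwind $x\tau\cdottil n = \infty\gamma_x^n = \infty\tau^{-1}\alpha_x^n\tau = (0\alpha_x^n)\tau = (x\cdot n)\tau$. For (ii) you take a mildly different route. The paper deduces (ii) from (i) in one line by re-applying (i) with the $\mu$-map $\tau^{-1}$ in place of $\tau$ (legitimate because $\tau^{-1}$ is again a $\mu$-map and both $\cdot\, n$ and $\cdottil\, n$ are independent of the choice of $\tau$, as noted in the remark after the definition of $\cdottil$) and then substituting $x\tau$; this is precisely the alternative you mention in parentheses. Your preferred argument instead computes $(x\cdottil n)\tau = 0\cdot\alpha_{x\tau^{-1}}^n\tau^2$ directly and clears the leftover $\tau^2$ via Lemma~\ref{prop:huaAut} applied with both $\mu$-maps equal to $\tau$, giving $\alpha_y^n\tau^2=\tau^2\alpha_{y\tau^2}^n$ and hence $0\cdot\alpha_{x\tau}^n = x\tau\cdot n$. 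This is correct: the hypotheses of Lemma~\ref{prop:huaAut} are met since $x\tau^{-1}\nsim\infty$ when $x\nsim 0$, and $0\tau^2=0$ as you use. The trade-off is minor: your route invokes the (slightly heavier) Hua-map lemma but keeps the fixed $\tau$ throughout, while the paper's leans only on the $\tau$-independence of the two multiplications. Your bookkeeping of which points avoid $\overline{0}$ and $\overline{\infty}$ is the right set of checks and matches what the paper needs implicitly.
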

\begin{proof}\leavevmode
	\begin{enumerate}
		\item We have
		\[x\tau\cdottil n = \infty\gamma_x^n = \infty\alpha_x^{\tau n} = \infty\tau^{-1}\alpha_x^n\tau = (0\alpha_x^n)\tau = (x\cdot n)\tau\;.\]
		\item This follows from~\ref{itm:xnt}, using $x\tau^{-1}$ and replacing $\tau^{-1}$ by $\tau$.\qedhere
	\end{enumerate}
\end{proof}

Combining this with Corollary~\ref{cor:ndiv}, we are able to express $\cdot\frac{1}{n}$ in terms of $\cdottil$ and we can extend Proposition~\ref{prop:mu-x.s-part}:
\begin{proposition}\label{prop:mu-x.s}
	Let $\M$ be a special local Moufang set with abelian root groups, and $n\in\N$ a natural number. Assume that for all units $x$ and all $k\leq n$, $x\cdot k$ is also a unit. Let $\ell\in\{n,n^{-1}\}$. Then for all units $x$, we have
	\begin{enumerate}
		\item $x\cdottil\ell = x\cdot\ell^{-1}$, hence $(x\cdot\ell)\tau = x\tau\cdottil\ell$ and $(x\cdottil\ell)\tau = x\tau\cdot\ell$;
		\item For $y\nsim\infty$, we have $y\mu_{x\cdot\ell^{-1}} = y\mu_{x\cdottil\ell} = y\mu_x\cdottil\ell^{2}$;\label{itm:mu-x.sfirst}
		\item For $y\nsim 0$, we have $y\mu_{x\cdot\ell} = y\mu_x\cdot\ell^2$.\label{itm:mu-x.s}
	\end{enumerate}
\end{proposition}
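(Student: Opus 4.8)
We take the three claims in order; part~(i) is the core computation, and parts~(ii) and~(iii) are extensions of Proposition~\ref{prop:mu-x.s-part} from units to non-units. For~(i), the plan is to evaluate $(x\tau\cdot n)\tau$ in two ways. Since $\tau$ is a $\mu$-map and the root groups are abelian, Proposition~\ref{prop:mu-involution}\ref{itm:mu-involution} gives $\tau^2=\id$. For a unit $x$, the point $x\tau$ is again a unit (Lemma~\ref{prop:mu}, part~(i)) and $x\tau\cdot k$ is a unit for all $k\le n$ by Lemma~\ref{lem:ndiv}\ref{lem:ndiv-ii}, so Corollary~\ref{cor:ndiv}(ii) applied to $x\tau$ yields $(x\tau\cdot n)\tau=x\tau^2\cdot\tfrac1n=x\cdot\tfrac1n$, whereas item~\ref{itm:xnt} of the lemma immediately preceding this proposition (applied to $x\tau$) gives $(x\tau\cdot n)\tau=x\tau^2\cdottil n=x\cdottil n$. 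Comparing, $x\cdottil n=x\cdot n^{-1}$; replacing $x$ by the unit $x\cdot n$ (legitimate by Lemma~\ref{lem:ndiv}\ref{lem:ndiv-v}) and using uniqueness of $\cdottil$-division gives $x\cdottil n^{-1}=x\cdot n$. The two ``hence'' identities then follow by inserting these equalities into the same preceding lemma and into Corollary~\ref{cor:ndiv}(ii).

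For~(iii), I would reduce to the units case already available via a Hua-type element. Put $g:=\mu_x\mu_{x\cdot n}\in G$; since $\mu_x$ and $\mu_{x\cdot n}$ both interchange $0$ and $\infty$, the element $g$ fixes both $0$ and $\infty$, so exactly as in Lemma~\ref{prop:huaAut} one has $\alpha_w^{\,g}=\alpha_{wg}$ for every $w\nsim\infty$. Since $\mu_x$ is an involution (Proposition~\ref{prop:mu-involution}\ref{itm:mu-involution}) preserving units, as $y$ runs over units so does $w:=y\mu_x$, with $y=w\mu_x$, and Proposition~\ref{prop:mu-x.s-part} then reads $wg=w\cdot n^2$, hence $\alpha_w^{\,g}=\alpha_w^{\,n^2}$, for all units $w$. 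For an arbitrary $w\nsim\infty$ I would fix a unit $e$, write $\alpha_w=\alpha_{w\alpha_{-e}}\alpha_e$ with both indices units, and conjugate by $g$; since $U_\infty$ is abelian this gives $\alpha_w^{\,g}=\alpha_w^{\,n^2}$, i.e.\ $wg=w\cdot n^2$, for every $w\nsim\infty$. Finally, for $y\nsim0$ we have $y\mu_x\nsim\infty$ and $\mu_{x\cdot n}=\mu_xg$, so $y\mu_{x\cdot n}=(y\mu_x)g=y\mu_x\cdot n^2$. The case $\ell=n^{-1}$ follows by substituting the unit $x\cdot\tfrac1n$ for $x$ and dividing, using that $U_\infty$ is uniquely $n$-divisible (Proposition~\ref{prop:ndivglobal}) and hence uniquely $n^2$-divisible.

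For~(ii), the equality $y\mu_{x\cdot\ell^{-1}}=y\mu_{x\cdottil\ell}$ is immediate from~(i). For the second equality I would transport~(iii) through $\tau$: in the present special abelian setting $\mu_{-x}=\mu_x$ and $\tau^{-1}=\tau$, so Lemma~\ref{prop:mu}\ref{itm:mutau} gives $\mu_{w\tau}=\tau\mu_w\tau$. By~(i), $x\cdottil\ell=(x\tau\cdot\ell)\tau$, hence $\mu_{x\cdottil\ell}=\tau\,\mu_{x\tau\cdot\ell}\,\tau$. For $y\nsim\infty$ the point $y\tau$ satisfies $y\tau\nsim0$ and $x\tau$ is a unit, so~(iii) applies, and using $\mu_{x\tau}=\tau\mu_x\tau$ and $\tau^2=\id$ once more, $y\tau\,\mu_{x\tau\cdot\ell}=y\tau\,\mu_{x\tau}\cdot\ell^2=\bigl((y\mu_x)\tau\bigr)\cdot\ell^2$. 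Applying $\tau$ and invoking item~\ref{itm:xnt} (for $\ell=n$), respectively its $\cdottil$-counterpart together with unique $n^2$-divisibility of $U_0$ (for $\ell=n^{-1}$), converts $\bigl(((y\mu_x)\tau)\cdot\ell^2\bigr)\tau$ into $y\mu_x\cdottil\ell^2$, which is exactly what is claimed. Alternatively, (ii) is simply (iii) read off under the symmetry exchanging $0$ with $\infty$ and $\cdot$ with $\cdottil$.

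The only genuinely delicate point is bookkeeping in the $\ell=n^{-1}$ halves of~(ii) and~(iii): keeping track of which divisibility ($\cdot$ versus $\cdottil$, by $n$ versus by $n^2$) is licensed at each step. Conceptually, everything rests on~(i) together with the $g$-reduction of~(iii); there is no further conceptual obstacle.
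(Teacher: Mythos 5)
Your proposal is correct and follows essentially the same route as the paper: part (i) via the preceding lemma combined with Corollary~\ref{cor:ndiv}, the reduction of the non-unit case to Proposition~\ref{prop:mu-x.s-part} by factoring through a unit $e$ and invoking Lemma~\ref{prop:huaAut} on a product of $\mu$-maps, and the $0\leftrightarrow\infty$ symmetry to pass between (ii) and (iii). The only difference is cosmetic: you prove (iii) first (conjugating by $g=\mu_x\mu_{x\cdot n}$) and deduce (ii), whereas the paper proves (ii) directly (applying $\tau$ and using $\mu_{x\cdot\ell^{-1}}\tau$ as the Hua element) and obtains (iii) by symmetry.
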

\begin{proof}\leavevmode
	\begin{enumerate}
		\item By Corollary~\ref{cor:ndiv} and the previous lemma, we have
		\[(x\cdottil n)\tau = x\tau\cdot n = \bigl(x\cdot\frac{1}{n}\bigr)\tau\;.\]
		so $x\cdottil n = x\cdot\frac{1}{n}$. By switching the roles of $0$ and $\infty$, we also get $x\cdottil\frac{1}{n} = x\cdot n$. Furthermore, we get $x\tau\cdottil\ell = x\tau\cdot\ell^{-1} = (x\cdot\ell)\tau$ and $x\tau\cdot\ell = (x\cdot\ell^{-1})\tau = (x\cdottil\ell)\tau$.
		\item By applying $\tau$ to both sides of the identity we want to prove, we get the equivalent identity $y\mu_{x\cdot\ell^{-1}}\tau = y\mu_x\tau\cdot\ell^2$. By Proposition~\ref{prop:mu-x.s-part}, this identity holds if $y$ is a unit. So assume now that $y\sim 0$. Let $e$ be a unit, then $y = y\alpha_{-e}\alpha_e$, so using Lemma~\ref{prop:huaAut} we get
		\begin{align*}
			y\mu_{x\cdot\ell^{-1}}\tau &= y\alpha_{-e}\alpha_e\mu_{x\cdot\ell^{-1}}\tau \\
				&= (y\alpha_{-e})\mu_{x\cdot\ell^{-1}}\tau\alpha_e^{\mu_{x\cdot\ell^{-1}}\tau} \\
				&= (y\alpha_{-e}\mu_x\tau\cdot\ell^2)\alpha_{e\mu_{x\cdot\ell^{-1}}\tau} \\
				&= (y\alpha_{-e}\mu_x\tau\cdot\ell^2)\alpha_{e\mu_x\tau\cdot\ell^2} \\
				&= (y\alpha_{-e}\mu_x\tau\alpha_{e\mu_x\tau})\cdot\ell^2 \\
				&= (y\alpha_{-e}\alpha_e)\mu_x\tau\cdot\ell^2 \\
				&= y\mu_x\tau\cdot\ell^2\;.
		\end{align*}
		Hence the desired identity also holds for $y\sim 0$.
		\item This is precisely the \ref{itm:mu-x.sfirst} with $0$ and $\infty$ interchanged.\qedhere
	\end{enumerate}
\end{proof}

\section{From local Jordan pairs to local Moufang sets}\label{sec:JP}

\subsection{Preliminaries on local Jordan pairs}

Throughout this section, we will be working with Jordan pairs. First we recall some notations and definitions from \cite{MR0444721}. Remark that we will change the left action of {\em loc.\@ cit.} to a right action, in order to be consistent with the action of our local Moufang sets. The index $\sigma$ will always take values $+$ and $-$.

\begin{definition}
	Let $k$ be a commutative unital ring and $V = (V^+,V^-)$ a pair of $k$-modules with quadratic maps $Q\colon V^\sigma\to \Hom(V^{-\sigma},V^\sigma)$.
	We write $Q_{x,z} := Q_{x+z}-Q_x-Q_z$, $zD_{x,y} := yQ_{x,z}$ and $\{xyz\}:=yQ_{x,z}$.
	Then $V$ is a \emph{Jordan pair} if the following axioms are satisfied in all scalar extensions of the base ring:
			\begin{enumerate}[label=\textnormal{(JP\arabic*)},leftmargin=*]
				\item $\{x\,y\,zQ_x\} = \{yxz\}Q_x$;\label{axiom:JP1}
				\item $\{yQ_x\,y\,z\} = \{x\,xQ_y\,z\}$;\label{axiom:JP2}
				\item $Q_{yQ_x} = Q_xQ_yQ_x$.\label{axiom:JP3}
			\end{enumerate}
	A pair of submodules $U = (U^+,U^-)$ is an \emph{ideal} if $vQ_u\in U^\sigma$, $uQ_v\in U^{-\sigma}$ and $\{v'vu\}\in U^\sigma$ for all $u\in U^\sigma, v\in V^{-\sigma}, v'\in V^\sigma$. If $(U^+,U^-)$ is an ideal, the quotient $V/U = (V^+/U^+, V^-/U^-)$ is a Jordan pair. An ideal $U$ is \emph{proper} if $U\neq V$.
    A \emph{homomorphism} of Jordan pairs is a pair of $k$-linear maps $h_\sigma\colon V^\sigma\to W^\sigma$ such that
	\[h_\sigma(yQ_x) = h_{-\sigma}(y)Q_{h_\sigma(x)}\quad\text{for all $x\in V^\sigma,y\in V^{-\sigma}$.}\]
\end{definition}

The following proposition gives some useful criteria to check whether a given structure is a Jordan pair.
\begin{proposition}\leavevmode\label{prop:JPsufficientaxioms}
	\begin{enumerate}
		\item If $V$ has no $2$-torsion, then \ref{axiom:JP3} follows from \hyperref[axiom:JP1]{\normalfont{(JP1-2)}}. Hence in this case $V$ is a Jordan pair if \hyperref[axiom:JP1]{\normalfont{(JP1-2)}} are satisfied in all scalar extensions of the base ring.
		\item If $V$ has no $2$-torsion and \ref{axiom:JP1}, all its linearizations and \ref{axiom:JP2} hold, then $V$ is a Jordan pair.\label{itm:sufficient_JP}
	\end{enumerate}
\end{proposition}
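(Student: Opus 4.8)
\emph{Plan of proof.} For part~(\textit{i}), the statement that the fundamental formula \ref{axiom:JP3} is a formal consequence of \ref{axiom:JP1}--\ref{axiom:JP2} over any $2$-torsion-free base is classical, going back to Meyberg \cite{MR0263883} and worked out in detail by Loos \cite{MR0444721}; the plan is therefore to recall this argument. Its shape is as follows: one first rewrites \ref{axiom:JP1} and \ref{axiom:JP2} as operator identities among the maps $Q_x$, $Q_{x,z}$ and $D_{x,y}$, and linearizes \ref{axiom:JP1} in $x$ (replacing $x$ by $x+z$ and extracting the bilinear part) to obtain a handful of auxiliary operator identities. Then, starting from $2\,wQ_{yQ_x}=\{(yQ_x)\,w\,(yQ_x)\}$ --- valid since $\{a\,b\,a\}=bQ_{a,a}=2\,bQ_a$ --- one applies these identities in a chain of substitutions to rewrite the right-hand side as $2\,wQ_xQ_yQ_x$, and finally cancels the factor $2$, which is legitimate precisely because $V$ is $2$-torsion-free; as $w$ is arbitrary, this gives \ref{axiom:JP3}. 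The main obstacle here is purely computational: following a long chain of rewrites while keeping track of which of $V^{+}$, $V^{-}$ each intermediate expression lives in (the bookkeeping peculiar to the Jordan pair setting as opposed to the Jordan algebra setting), and checking that no integer other than a power of $2$ ever has to be inverted.

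For the second sentence of part~(\textit{i}), and for part~(\textit{ii}), the plan is to reduce everything to the first sentence of~(\textit{i}) by means of the formalism of polynomial (identity) laws. We would use two facts. First, an identity that is homogeneous in each of its variables holds in every scalar extension of $k$ if and only if the identity together with all its linearizations holds over $k$; equivalently, if and only if it holds over $k[t_1,\dots,t_n]$ for every $n$. Second, if $V$ is $2$-torsion-free over $k$, then so is $V\otimes_k k[t_1,\dots,t_n]$, since $V^{\sigma}\otimes_k k[t_1,\dots,t_n]\cong V^{\sigma}[t_1,\dots,t_n]$ is just the module of polynomials with coefficients in $V^{\sigma}$. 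Granting these, the second sentence of~(\textit{i}) follows at once: if \ref{axiom:JP1} and \ref{axiom:JP2} hold in all scalar extensions, then they hold over each $k[t_1,\dots,t_n]$, where by the second fact the first sentence of~(\textit{i}) applies and yields \ref{axiom:JP3} over $k[t_1,\dots,t_n]$; by the first fact this gives \ref{axiom:JP3} in all scalar extensions, so $V$ is a Jordan pair.

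Part~(\textit{ii}) is handled by the same mechanism, but with one extra ingredient. There we are given \ref{axiom:JP1} together with \emph{all} its linearizations, but only \ref{axiom:JP2} itself; so, to conclude from the first fact above that \ref{axiom:JP2} holds in all scalar extensions, we would also need all linearizations of \ref{axiom:JP2} over $k$. The plan is to derive these over the $2$-torsion-free ring $k$ from \ref{axiom:JP1}, its linearizations and \ref{axiom:JP2} --- concretely, one checks that modulo \ref{axiom:JP1} and its linearizations the identity \ref{axiom:JP2} is equivalent to its total linearization (which is multilinear), the point being that de-linearizing the latter recovers $4$ times \ref{axiom:JP2}, which is harmless since $V$ is $2$-torsion-free. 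Once \ref{axiom:JP2} and all its linearizations are available over $k$, the identities \ref{axiom:JP1} and \ref{axiom:JP2} hold over every $k[t_1,\dots,t_n]$, and one finishes exactly as in the previous paragraph. We expect this extra computation --- deriving the linearizations of \ref{axiom:JP2} from \ref{axiom:JP1}, its linearizations and \ref{axiom:JP2} --- to be the only genuine work in part~(\textit{ii}), and, like the argument for the first sentence of~(\textit{i}), to be a matter of patient manipulation of identities rather than of any new idea.
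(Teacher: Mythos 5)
The paper does not actually prove this proposition: both parts are imported verbatim from Loos's lecture notes (part~(\textit{i}) is Proposition~2.2(a) of \cite{MR0444721}, part~(\textit{ii}) is the remark following Definition~1.2 there), so your attempt has to be measured against that classical argument rather than against anything written in the paper. Your outline reproduces its structure correctly: for~(\textit{i}), starting from $2\,wQ_{yQ_x}=\{yQ_x\,w\,yQ_x\}$ (valid since $\{a\,w\,a\}=wQ_{a,a}=2\,wQ_a$), rewriting the right-hand side as $2\,wQ_xQ_yQ_x$ by means of the operator consequences of \ref{axiom:JP1}, its linearizations and \ref{axiom:JP2}, and cancelling the factor $2$ by torsion-freeness is exactly Loos's route; and the reduction of the ``all scalar extensions'' clauses to identities-plus-linearizations over $k$, together with the observation that $V\otimes_k k[t_1,\dots,t_n]$ inherits $2$-torsion-freeness, is the right mechanism for the second sentence of~(\textit{i}) and for~(\textit{ii}). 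Two caveats. First, the computational core of~(\textit{i}) --- the actual chain of rewrites --- is only described, not carried out, so what you have is a correct plan rather than a proof; that chain is precisely what the citation to \cite{MR0444721} supplies. Second, your handling of part~(\textit{ii}) is more roundabout than necessary: \ref{axiom:JP2} is homogeneous of degree $2$ in each of $x$ and $y$ (and linear in $z$), so \emph{all} of its linearizations follow from \ref{axiom:JP2} itself by bare polarization (evaluate at $x_1+x_2$, $x_1$, $x_2$ and subtract, and likewise in $y$), with no division, no torsion hypothesis, and no appeal to \ref{axiom:JP1}; this is exactly why Loos can omit the linearizations of \ref{axiom:JP2} from the hypotheses while having to include those of \ref{axiom:JP1}, which is cubic in $x$. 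Your detour through the total linearization and de-linearization by $4$ is not wrong, but only the easy (division-free) direction of that equivalence is ever needed.
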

\begin{proof}\leavevmode
	\begin{enumerate}
		\item This is \cite[Proposition 2.2(a)]{MR0444721}.
		\item This is remarked just after \cite[Definition 1.2]{MR0444721}.\qedhere
	\end{enumerate}
\end{proof}

We will also need the notions of invertibility, (properly) quasi-invertibility, and of course of a local Jordan pair, again from \cite{MR0444721}.

\begin{definition}
	An element $v\in V^\sigma$ is \emph{invertible} if and only if $Q_v$ is invertible. In this case, we define $v^{-1}:=vQ_v^{-1}$. A Jordan pair is \emph{division} if all non-zero elements are invertible. A Jordan pair is \emph{local} if the non-invertible elements form a proper ideal.
	For $(x,y)\in V$ (this means $x\in V^+, y\in V^-$), we define the \emph{Bergman operator}
    \[ B_{x,y} := \id - D_{x,y} + Q_yQ_x \;, \]
    and $(x,y)$ is \emph{quasi-invertible} if and only if $B_{x,y}$ is invertible. In this case, we define the \emph{quasi-inverse} $x^y := (x-yQ_x)B^{-1}_{x,y}$. An element $x\in V^+$ (or $y\in V^-$) is \emph{properly quasi-invertible} if and only if $(x,y)$ is quasi-invertible for all $y\in V^-$ (or all $x\in V^+$, respectively). The \emph{Jacobson radical} $\Rad V = (\Rad V^+, \Rad V^-)$ is the pair of sets of all properly quasi-invertible elements.
\end{definition}

To do computations in local Jordan pairs, we will need some further properties and identities:

\begin{proposition}\leavevmode\label{prop:JPbasic}
	\begin{enumerate}
		\item For any $x$ and $y$, we have, $Q_{x,yQ_x} = Q_xD_{x,y} = D_{y,x}Q_x$.
		\item For invertible $x$ and any $y$, we have $Q_{x,y}Q_x^{-1} = D_{x^{-1},y}$. \label{itm:JPbasicid}
		\item If $x$ is invertible, $B_{x,y} = Q_{x^{-1}-y}Q_x$. If $y$ is invertible, we have $B_{x,y} = Q_yQ_{x-y^{-1}}$.\label{itm:JPbasicA}
		\item Assume $(x,y)$ is quasi-invertible. Then $(x,y+z)$ is quasi-invertible if and only if $(x^y,z)$ is quasi-invertible. In this case, we have $x^{y+z} = (x^y)^z$.\label{itm:JPbasicB}
		\item $(x,y)$ is quasi-invertible if and only $(y,x)$ is quasi-invertible in $(V^-,V^+)$. In this case, $x^y = x+y^xQ_x$.\label{itm:JPbasicswitch}
		\item $(x,zQ_y)$ is quasi-invertible if and only if $(xQ_y,z)$ is quasi-invertible. In this case, $(xQ_y)^z = x^{zQ_y}Q_y$.\label{itm:JPbasicQy}
		\item If $V$ is a local Jordan pair, then $\Rad V$ is the set of non-invertible elements of $V$.\label{itm:JPbasicD}
		\item If $V/\Rad V$ is a non-trivial Jordan division pair, then $V$ is a local Jordan pair.
		\item If $(x,y)\mod \Rad V$ is quasi-invertible in the quotient $V/\Rad V$, then also $(x,y)$ is quasi-invertible.\label{itm:JPbasicQIlift}
		\item If $x\in \Rad V^+$ and $y\in V^-$, then $x^y\in\Rad V^+$.
		\item If $x,y\in V^+$ are invertible and $x-y\in\Rad V^+$, then $x^{-1}-y^{-1}\in\Rad V^-$.\label{itm:JPbasicInvertRadical}
	\end{enumerate}
\end{proposition}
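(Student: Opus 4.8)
The plan is to convert the statement into a statement about \emph{non-invertible} elements and then extract a single operator identity. Since $V$ is local, Proposition~\ref{prop:JPbasic}\ref{itm:JPbasicD} identifies $\Rad V^\sigma$ with the set of non-invertible elements of $V^\sigma$. Thus the hypothesis $x-y\in\Rad V^+$ says precisely that $Q_{x-y}$ is not invertible, and the conclusion $x^{-1}-y^{-1}\in\Rad V^-$ (note $x^{-1},y^{-1}\in V^-$ since $x,y\in V^+$) amounts to showing that the operator $Q_{x^{-1}-y^{-1}}$ is not invertible. So it suffices to write $Q_{x^{-1}-y^{-1}}$ as $Q_{x-y}$ flanked by two invertible operators.

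To produce that identity I would evaluate the Bergman operator $B_{x,y^{-1}}$ in two ways via Proposition~\ref{prop:JPbasic}\ref{itm:JPbasicA}; note $(x,y^{-1})\in V$ since $x\in V^+$ and $y^{-1}\in V^-$. Since $x$ is invertible, the first formula gives $B_{x,y^{-1}}=Q_{x^{-1}-y^{-1}}Q_x$. Since $y^{-1}$ is invertible with $(y^{-1})^{-1}=y$, the second formula gives $B_{x,y^{-1}}=Q_{y^{-1}}Q_{x-y}$. Comparing the two and cancelling the invertible operator $Q_x$ on the right yields
\[ Q_{x^{-1}-y^{-1}} = Q_{y^{-1}}\,Q_{x-y}\,Q_x^{-1}. \]
Here $Q_x^{-1}$ is invertible because $x$ is, and $Q_{y^{-1}}$ is invertible because $y^{-1}$ is again an invertible element; only $Q_{x-y}$ fails to be invertible. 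A linear map that is an invertible map composed with a non-invertible map composed with an invertible map is non-invertible, so $Q_{x^{-1}-y^{-1}}$ is not invertible, i.e.\ $x^{-1}-y^{-1}$ is not an invertible element of $V^-$, hence $x^{-1}-y^{-1}\in\Rad V^-$ by Proposition~\ref{prop:JPbasic}\ref{itm:JPbasicD} once more.

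I do not expect a genuine obstacle. The only points requiring care are the bookkeeping in \ref{itm:JPbasicA} — it must be applied to $B_{x,y^{-1}}$ (not $B_{x,y}$), keeping straight that $x$ is the element assumed invertible in its first formula while $y^{-1}$ is the element assumed invertible in its second formula, and using $(y^{-1})^{-1}=y$ — together with the standard fact that the inverse of an invertible element of a Jordan pair is again invertible, which is what guarantees that $Q_{y^{-1}}$ is an invertible operator; this fact is recorded in \cite{MR0444721}.
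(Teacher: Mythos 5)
Your proposal addresses only part~\ref{itm:JPbasicInvertRadical} of the proposition; the paper cites Loos for most of the other parts and gives short direct arguments for \ref{itm:JPbasicid} and for the statement about $x^y$ with $x\in\Rad V^+$, so the comparison below concerns \ref{itm:JPbasicInvertRadical} only. Your proof of that part is correct, but it takes a genuinely different route from the paper's. You evaluate the Bergman operator $B_{x,y^{-1}}$ twice via part~\ref{itm:JPbasicA} (once using that $x$ is invertible, once using that $y^{-1}$ is, together with $(y^{-1})^{-1}=y$), obtain $Q_{x^{-1}-y^{-1}}=Q_{y^{-1}}Q_{x-y}Q_x^{-1}$, and transport non-invertibility of $Q_{x-y}$ through the two invertible factors, translating between ``lies in $\Rad V$'' and ``is not invertible'' by part~\ref{itm:JPbasicD}. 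The paper instead expands $(x^{-1}-y^{-1})Q_x=(x-y)-y^{-1}Q_{x-y,y}-y^{-1}Q_{x-y}$ and notes that every term lies in $\Rad V^+$ because $x-y\in\Rad V^+$ and $\Rad V$ is an ideal, then pulls back through the invertible $Q_x$. The trade-off is mild: the paper's argument uses only the ideal property of the radical and so is valid for an arbitrary Jordan pair, whereas your double appeal to part~\ref{itm:JPbasicD} makes the argument depend on $V$ being local --- harmless here, since the statement is only invoked for local Jordan pairs (to see that the radical equivalence of Definition~\ref{def:radequiv} is well defined), but strictly less general. In exchange your derivation avoids the expansion of $Q_{x-y+y}$ entirely and reduces the claim to a clean operator identity; the auxiliary facts you invoke ($y^{-1}\in V^-$, $Q_{y^{-1}}=Q_y^{-1}$, and that an invertible composed with a non-invertible composed with an invertible map is non-invertible) are all sound.
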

\begin{proof}\leavevmode
	\begin{enumerate}
		\item This is JP4 in \cite[2.1]{MR0444721}.
		\item From the definition of $Q_{y,z}$, it is clear that $Q_xQ_{y,z}Q_x = Q_{yQ_x,zQ_x}$, so
		\[Q_{x,y}Q_x^{-1} = Q_xQ_x^{-1}Q_{x,y}Q_x^{-1} = Q_xQ_{x^{-1},yQ_x^{-1}} = Q_xQ_x^{-1}D_{x^{-1},y} = D_{x^{-1},y}\;.\]
		\item This is \cite[2.12]{MR0444721}.
		\item This is \cite[3.7(1)]{MR0444721}.
		\item This is \cite[3.3]{MR0444721}.
		\item This is \cite[3.5(1)]{MR0444721}.
		\item This is \cite[4.4(a)]{MR0444721}.
		\item This is \cite[4.4(b)]{MR0444721}.
		\item This is \cite[4.3]{MR0444721}.
		\item If $x\in\Rad V^+$, then $(x,z)$ is quasi-invertible for all $z\in V^-$. Hence $(x,y+z)$ is quasi-invertible for all $z\in V^-$, so $(x^y,z)$ is quasi-invertible for all $z\in V^-$. Hence $x^y$ is properly quasi-invertible, and $x^y\in\Rad V^+$.
		\item Since $\Rad V$ is an ideal and $Q_x$ is invertible, it is sufficient to prove that $(x^{-1}-y^{-1})Q_x$ is in $\Rad V^+$. This is indeed true, as
		\begin{align*}
			(x^{-1}-y^{-1})Q_x &= x - y^{-1}Q_{x-y+y} = x - y^{-1}Q_{x-y,y} - y^{-1}Q_{x-y} - y^{-1}Q_{y} \\
				&= (x-y) - y^{-1}Q_{x-y,y} - y^{-1}Q_{x-y} \in \Rad V^+\;,
		\end{align*}
		since $x-y\in\Rad V^+$.\qedhere
	\end{enumerate}
\end{proof}

We recall the connection between (local) Jordan algebras and (local) Jordan pairs.
\begin{proposition}
    Let $J$ be a quadratic Jordan algebra with quadratic maps $U\colon J\to\End(J)$. Then:
	\begin{enumerate}
        \item $(J,J)$ is a Jordan pair with $Q := U$.\label{itm:alg-pair}
		\item $J$ is a Jordan division algebra if and only if $(J,J)$ is a Jordan division pair.\label{itm:pair-alg-div}
		\item $J$ is a local Jordan algebra if and only if $(J,J)$ is a local Jordan pair.\label{itm:pair-alg-loc}
		\item The radical of the Jordan pair $(J,J)$ is $(\Rad J,\Rad J)$, where $\Rad J$ is the radical of the Jordan algebra $J$.\label{itm:pair-alg-rad}
		\item The map $J\mapsto(J,J)$ induces a bijection from isotopism classes of local Jordan algebras to isomorphism classes of local Jordan pairs.\label{itm:pair-alg-classes}
	\end{enumerate}
\end{proposition}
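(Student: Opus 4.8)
The plan is to deduce every part from the single identification $Q := U$ together with standard structure theory from \cite{MR0444721}. For (i), after this identification the axioms \ref{axiom:JP1}--\ref{axiom:JP3} are precisely three of the defining identities of a unital quadratic Jordan algebra — in particular \ref{axiom:JP3} is the fundamental formula $U_{U_xy}=U_xU_yU_x$ — and since the identities of a quadratic Jordan algebra persist under arbitrary scalar extension, they hold for $(J,J)$ in all scalar extensions; hence $(J,J)$ is a Jordan pair, as recorded in \cite[\S1]{MR0444721}. For (ii), I would observe that $v\in V^\sigma=J$ is invertible in the pair exactly when $Q_v=U_v$ is invertible, which is the definition of invertibility of $v$ in $J$; since $V^+=V^-=J$ carry the same $Q$-operators, the non-invertible elements of $V^+$ and of $V^-$ coincide and equal the set $N$ of non-invertible elements of $J$, and the division statement is then immediate.

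For (iii), I would first record the elementary observation that a submodule $I\subseteq J$ is an ideal of the Jordan algebra $J$ if and only if the pair $(I,I)$ is an ideal of $(J,J)$: the three closure conditions $vQ_u\in I$, $uQ_v\in I$, $\{v'vu\}\in I$ defining a pair ideal unravel to $U_IJ\subseteq I$, $U_JI\subseteq I$, $\{JJI\}\subseteq I$, which is one of the usual descriptions of an algebra ideal, and $(I,I)$ is proper exactly when $I\neq J$. Applying this with $I=N$ and invoking (ii) shows that $(J,J)$ is local if and only if $J$ is local. For (iv), by the symmetry $V^+=V^-$ it suffices to prove that $x\in J$ is properly quasi-invertible in the pair if and only if $x\in\Rad J$, which I would cite from \cite[\S4]{MR0444721}; when $J$ is local it also drops out of (iii) together with Proposition~\ref{prop:JPbasic}\ref{itm:JPbasicD}.

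For (v), I would invoke Loos's correspondence between unital Jordan algebras and Jordan pairs possessing an invertible element: from an invertible $v\in V^-$ one builds a unital Jordan algebra $J(V,v)$, the $v$-homotope, with unit $v^{-1}=vQ_v^{-1}$, satisfying $V\cong(J(V,v),J(V,v))$; a different invertible choice replaces $J(V,v)$ by an isotope of it; and $J((J,J),1_J)=J$. Granting this (all of it from \cite[\S1]{MR0444721}), the assignment $[J]\mapsto[(J,J)]$ is a well-defined bijection from isotopy classes of unital Jordan algebras onto isomorphism classes of Jordan pairs with an invertible element: it is well-defined because every isotope $J^{(u)}$ equals $J((J,J),u)$, so $(J^{(u)},J^{(u)})\cong(J,J)$; injective because an isomorphism $(J,J)\cong(J',J')$ carries $1_{J'}$ to an invertible element of $J$ and thereby identifies $J'$ with an isotope of $J$; and surjective by the reconstruction above. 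Finally I would restrict to the local case: a local Jordan pair automatically contains an invertible element — its non-invertible elements form a \emph{proper} ideal — so it is isomorphic to some $(J,J)$, and by (iii) such a $J$ is local precisely because $(J,J)$ is; hence the bijection restricts to one between isotopy classes of local Jordan algebras and isomorphism classes of local Jordan pairs.

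The only step that requires more than bookkeeping is part (v): its genuine input — Loos's reconstruction of a Jordan algebra from a Jordan pair with an invertible element, together with the identification of the choice of that element with the freedom of passing to an isotope — I would cite from \cite{MR0444721} rather than reprove. Everything else reduces to the identity $Q=U$ and to matching the ideal- and radical-theoretic notions on the algebra and pair sides.
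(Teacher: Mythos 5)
Your proposal is correct and follows essentially the same route as the paper: the paper's proof simply cites \cite{MR0444721} (1.6 for (i), 1.10 for (ii)--(iii), 1.12 combined with (iii) for (v), and 4.17 for (iv)), and your argument defers the same substantive inputs — the algebra-to-pair axiom check, the invertibility criterion, and the homotope/isotope reconstruction — to the same source, merely spelling out the ideal- and radical-matching bookkeeping that the paper leaves implicit.
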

\begin{proof} The first statement~\ref{itm:alg-pair} is \cite[1.6]{MR0444721}. Statements \ref{itm:pair-alg-div} and \ref{itm:pair-alg-loc} are in \cite[1.10]{MR0444721}, and \ref{itm:pair-alg-classes} is a consequence of \ref{itm:pair-alg-loc} and \cite[1.12]{MR0444721}. Finally, \ref{itm:pair-alg-rad} is one of the statements of \cite[4.17]{MR0444721}.
\end{proof}

\begin{examples}\leavevmode \label{ex:local}
	\begin{enumerate}[label=\textnormal{(\arabic*)},itemsep=0.3ex]
		\item Let $A$ be a local associative (not necessarily commutative) ring. Then $V = (A,A)$ with $Q_a\colon A\to A:x\mapsto axa$ is a local Jordan pair.
		\item Let $V=(V^+,V^-)$ be a Jordan division pair over a field $k$ and let $R$ be a commutative $k$-algebra which is a local ring. Then we can define $V\otimes_k R = (V^+\otimes_k R,V^-\otimes_k R)$ with $(x\otimes r)Q_{y\otimes s}:= xQ_y\otimes rs^2$. This is a local Jordan pair.
		\item Let $J$ be a finite dimensional Jordan division algebra over a field $K$ which is complete with respect to a discrete valuation $v$. Then we can extend the valuation on $K$ to a valuation $v_J$ on $J$. The subalgebra $J_0 = \{x\in J\mid v_J(x)\geq0\}$ is now a local Jordan algebra with $\Rad J_0 = \{x\in J\mid v_J(x)>0\}$ (and hence $(J_0,J_0)$ is a local Jordan pair). We refer to \cite{KnebuschJordanValuation,PeterssonJordanValuations,PeterssonExceptionalJordanValuation} for more details.
	\end{enumerate}
\end{examples}

We will rely on the notion of the projective space of $V$, which was introduced by O. Loos in \cite{MR516835}. The description we use comes from Loos' more recent article \cite{MR1280101}.

\begin{definition}
	Two pairs $(x,y),(x',y')\in V$ are \emph{projectively equivalent} if
	\[(x,y-y')\text{ is quasi-invertible and }x' = x^{y-y'}\;.\]
	Using Proposition~\ref{prop:JPbasic}\ref{itm:JPbasicB}, this can be shown to be an equivalence relation, and we will denote the equivalence class of $(x,y)$ by $[x,y]$. The \emph{projective space of $V$} is the set
	\[\P(V) := \{[x,y]\mid (x,y)\in V\}\;.\]
\end{definition}

\subsection{Defining a local Moufang set \texorpdfstring{$\M(V)$}{M(V)} from a local Jordan pair \texorpdfstring{$V$}{V}}

To define a local Moufang set from a local Jordan pair, we first need a set with equivalence relation. The set we will use is the projective space $\P(V)$ of $V$. In order to define an equivalence relation on $\P(V)$, it will be convenient to have a nice set of representatives for its elements. This description will rely on the choice of an invertible element $e\in V^+$.

\begin{proposition}\label{prop:PVrepr}
	Let $V$ be a local Jordan pair and $e\in V^+$ invertible. For any $(x,y)\in V$, at least one of the following occurs:
	\begin{enumerate}[label*=\textnormal{\Roman*.}]
		\item There is a unique $t\in V^+$ such that $[x,y]=[t,0]$.
		\item There is a unique $t\in V^-$ such that $[x,y]=[e,e^{-1}+t]$.
	\end{enumerate}
	If in either of the cases $t$ is non-invertible, then the other case cannot occur. If $t$ is invertible, we have
	\[[t,0] = [e,e^{-1}-t^{-1}].\]
\end{proposition}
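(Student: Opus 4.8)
\textbf{Proof strategy for Proposition~\ref{prop:PVrepr}.}

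The plan is to work directly with the definition of projective equivalence and the description of $\P(V)$ as classes $[x,y]$, using the fact that in a local Jordan pair, quasi-invertibility of $(x,y)$ can be detected modulo $\Rad V$ (Proposition~\ref{prop:JPbasic}\ref{itm:JPbasicQIlift}) and that $\Rad V$ is exactly the set of non-invertible elements (Proposition~\ref{prop:JPbasic}\ref{itm:JPbasicD}). First I would observe that $[x,y]=[t,0]$ means $(x,y)$ is quasi-invertible (equivalently $(x,-y)$ by the symmetric role, but more precisely: $(t,-y)$ quasi-invertible with $x = t^{-y}$, i.e. $[t,0]=[x,y]$ iff $(x,y)$ is quasi-invertible and $t = x^{y}$), so Case~I holds precisely when $(x,y)$ is quasi-invertible, and then $t = x^{y}$ is the unique such element since the quasi-inverse is unique. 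Similarly, $[x,y]=[e,e^{-1}+s]$ for some $s\in V^-$ means $(e, e^{-1}+s-y)$ is quasi-invertible with $x = e^{(e^{-1}+s-y)}$; since $e$ is invertible, Proposition~\ref{prop:JPbasic}\ref{itm:JPbasicA} gives $B_{e, e^{-1}+s-y} = Q_{e^{-1}-(e^{-1}+s-y)}Q_e = Q_{y-s}Q_e$, so this Bergman operator is invertible iff $Q_{y-s}$ is invertible iff $y-s$ is invertible; choosing $s := y - (\text{something invertible})$ is always possible, and in fact one shows the correspondence $s \leftrightarrow$ the point is a bijection onto its image — the content is that \emph{at least one} of the two cases always occurs.

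The key dichotomy comes from the local structure. Given $(x,y)$, reduce modulo $\Rad V$ to land in the Jordan \emph{division} pair $V/\Rad V$; there, every pair is in one of the two standard forms because of how $\P$ of a division pair behaves (a point is either ``affine'' $[\bar t,0]$ or the point at infinity $[\bar e,\bar e^{-1}]$, and more generally $[\bar e, \bar e^{-1}+\bar t]$ covers a complementary affine chart). Concretely: either $(x,y)$ is quasi-invertible — then Case~I applies with $t = x^{y}$ — or it is not, in which case $\bar y - \bar s$ fails to be invertible for the relevant $\bar s$, forcing (by the division property) a rigidity that puts the point in Case~II. I would make this precise by: (a) if $(x,y)$ is quasi-invertible, set $t := x^{y}$, done with Case~I; (b) if not, then I claim $(e, e^{-1} - y + \text{(correction)})$ becomes quasi-invertible — here one picks the representative so that $y$ shifts to something invertible. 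The cleanest route is to note $[x,y]=[e, e^{-1}+t]$ is equivalent (using that $e$ is invertible and Proposition~\ref{prop:JPbasic}\ref{itm:JPbasicA}, \ref{itm:JPbasicswitch}) to $(y - (e^{-1}+t), x - e)$ being quasi-invertible in $(V^-,V^+)$ with an appropriate formula, and solve for $t$.

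For uniqueness within each case: in Case~I, $t = x^{y}$ is forced since $[t,0]=[t',0]$ implies $(t, 0)$ quasi-invertible (trivially) and $t = t'^{\,0} = t'$; in Case~II, if $[e,e^{-1}+t]=[e,e^{-1}+t']$ then $(e, t-t')$ is quasi-invertible with $e = e^{(t-t')}$, and since $e$ is invertible, $B_{e,t-t'} = Q_{e^{-1}-(t-t')}Q_e$, and the fixed-point equation $e = e^{t-t'}= (e - (t-t')Q_e)B_{e,t-t'}^{-1}$ forces $t = t'$ after a short computation. For the incompatibility claim: if the $t$ in Case~I is non-invertible, i.e. $t = x^{y}\in\Rad V^+$, then $x$ itself is a quasi-inverse-shift of a radical element hence... one shows the two cases describe complementary charts whose overlap consists exactly of the invertible-$t$ locus — so a non-invertible $t$ in one chart cannot also be visible in the other. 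Finally, the overlap formula $[t,0] = [e, e^{-1}-t^{-1}]$ when $t$ is invertible: this is a direct verification that $(t, 0 - (e^{-1}-t^{-1})) = (t, t^{-1}-e^{-1})$ is quasi-invertible with $t^{\,t^{-1}-e^{-1}} = e$, which follows from Proposition~\ref{prop:JPbasic}\ref{itm:JPbasicA} (giving $B_{t, t^{-1}-e^{-1}} = Q_{t^{-1}-(t^{-1}-e^{-1})}Q_t = Q_{e^{-1}}Q_t$, invertible since $t,e$ are) together with the quasi-inverse formula and the identity $t^{-1}Q_t = t$.

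\textbf{Main obstacle.} The genuinely non-formal step is establishing that \emph{at least one} of the two cases occurs for \emph{every} $(x,y)$ — i.e.\ handling the case where $(x,y)$ is not quasi-invertible and producing the unique $t\in V^-$ with $[x,y]=[e,e^{-1}+t]$. This is where the local (rather than division) hypothesis does real work: one must reduce mod $\Rad V$, use that in the division pair the two charts cover all of $\P(V/\Rad V)$, and then lift back using Proposition~\ref{prop:JPbasic}\ref{itm:JPbasicQIlift} to conclude the required quasi-invertibility holds already in $V$. Getting the bookkeeping of which correction term to add (to turn $y$ into an invertible element) right, and checking it is forced to be unique, is the technical heart; everything else is a direct application of the quasi-inverse identities in Proposition~\ref{prop:JPbasic}.
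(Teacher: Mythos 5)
Your framework is right and the easy pieces (Case I via $t=x^y$, the Bergman-operator computation $B_{e,e^{-1}+s-y}=Q_{y-s}Q_e$, and the overlap identity $[t,0]=[e,e^{-1}-t^{-1}]$) match the paper, but the step you yourself flag as the ``main obstacle'' is exactly the one the proposal never closes, and it has a one-line resolution that you are missing. The key fact is Proposition~\ref{prop:JPbasic}\ref{itm:JPbasicD}: in a local Jordan pair the non-invertible elements are precisely $\Rad V$, i.e.\ precisely the properly quasi-invertible elements. Hence if $(x,y)$ is \emph{not} quasi-invertible, then $x$ cannot be properly quasi-invertible, so $x$ is invertible; this immediately hands you the explicit correction term: take $t:=y-x^{-1}$, so that $y-t=x^{-1}$ is invertible, $B_{e,e^{-1}+t-y}=Q_{y-t}Q_e=Q_{x^{-1}}Q_e$ is invertible, and a direct computation with the quasi-inverse formula gives $e^{e^{-1}+t-y}=x$. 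That is the paper's argument, and it needs no passage to the quotient at all.

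Your proposed substitute --- reduce mod $\Rad V$, cover $\P(V/\Rad V)$ by the two charts of the division pair, and lift --- does not work as stated, because Proposition~\ref{prop:JPbasic}\ref{itm:JPbasicQIlift} only lifts \emph{quasi-invertibility} from the quotient; it does not lift the equality $x=e^{(e^{-1}+t)-y}$ that defines projective equivalence, so knowing the image of $(x,y)$ lies in the second chart downstairs does not by itself produce the required $t$ upstairs. The incompatibility claim is also left as an assertion (``complementary charts whose overlap is the invertible locus''): the actual argument is that $[t,0]=[e,e^{-1}+s]$ forces $B_{e,e^{-1}+s}=Q_{-s}Q_e$ to be invertible, hence $s$ invertible, and then the quasi-inverse computation gives $t=-s^{-1}$, so \emph{both} parameters are invertible whenever the two descriptions coincide --- which is what rules out a non-invertible $t$ appearing in both charts. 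With the observation ``$(x,y)$ not quasi-invertible $\Rightarrow$ $x$ invertible'' inserted, your outline becomes essentially the paper's proof; without it, the existence half of the dichotomy is not established.
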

\begin{proof}
	Let $(x,y)\in V$. Assume first that $(x,y)$ is quasi-invertible. Then we immediately have $[x,y] = [x^y,0]$, so we are in the first case.

    So assume now that $(x,y)$ is not quasi-invertible. By Proposition~\ref{prop:JPbasic}\ref{itm:JPbasicD}, this means that $x$ is invertible. In this case, set $t = y-x^{-1}$. Now, using Proposition~\ref{prop:JPbasic}\ref{itm:JPbasicA}, we have
	\begin{align*}
		[x,y] = [e,e^{-1}+t] &\iff (e,e^{-1}-x^{-1})\text{ is quasi-invertible and }e^{e^{-1}-x^{-1}} = x \\
		&\iff B_{e,e^{-1}-x^{-1}}\text{ is invertible and }e-(e^{-1}-x^{-1})Q_e= xB_{e,e^{-1}-x^{-1}} \\
		&\iff Q_{e^{-1}-(e^{-1}-x^{-1})}Q_e\text{ is invertible and }x^{-1}Q_e = xQ_{e^{-1}-(e^{-1}-x^{-1})}Q_e \\
		&\iff Q_{x^{-1}}Q_e\text{ is invertible and }x^{-1}Q_e = xQ_{x^{-1}}Q_e
	\end{align*}
	Now $e$ and $x$ are invertible, so $Q_e$ and $Q_{x^{-1}}$ are invertible, and $x^{-1} = xQ_{x^{-1}}$. So indeed, we found a representative for $[x,y]$ of the second form.

	Now assume that $[t,0]=[e,e^{-1}+s]$ for some $s \in V^-$. Then $B_{e,e^{-1}-s}$ must be invertible, but $B_{e,e^{-1}-s} = Q_{s}Q_e$, since $e$ is invertible. Hence $Q_{s}$ must be invertible, so $s$ must be invertible. In this case
	\[t = e^{e^{-1}+s} = (e - (e^{-1}+s)Q_e)Q_e^{-1}Q_{-s}^{-1} = (e - e-sQ_e)Q_e^{-1}Q_{s}^{-1} = -sQ_{s}^{-1} = -s^{-1}\;,\]
	so $t$ is also invertible. This proves the remaining statements.
\end{proof}

By Proposition \ref{prop:PVrepr}, we now have a nice set of representatives for $\P(V)$ as follows:
\begin{equation}\label{eq:P(V)}
    \P(V) = \{[x,0]\mid x\in V^+\}\cup \{[e,e^{-1}+y]\mid y\in \Rad V^-\}\;.
\end{equation}
The second subset consists of projective points that are ``close'' to each other, in the sense that they only differ by a non-invertible element. We can define a similar closeness relation on the first subset.

\begin{definition}\label{def:radequiv}
	We define a \emph{radical equivalence} relation $\sim$ on $\P(V)$ by
	\begin{align*}
		[x,0]&\sim[x',0]\iff x-x'\in\Rad V^+&&\text{for all $x,x'\in V^+$;} \\
		[e,e^{-1}+y]&\sim [e,e^{-1}+y']\iff y-y'\in\Rad V^-&&\text{for all $y,y'\in V^-$;} \\
		[x,0]&\not\sim [e,e^{-1}+y]&&\text{if $x\in\Rad V^+$ or $y\in\Rad V^-$.}
	\end{align*}
\end{definition}

Observe that this equivalence is well-defined by Proposition~\ref{prop:JPbasic}\ref{itm:JPbasicInvertRadical}.
\begin{remark}
	We could have avoided the explicit choice of representatives for $\P(V)$ by defining the radical equivalence by
    \[ [x,y] \sim [x',y'] \iff
	\begin{aligned}
		&\text{ there are }(\hat{x},\hat{y})\in[x,y]\text{ and }(\hat{x}',\hat{y}')\in[x',y'] \\
		&\text{ such that } (\hat{x},\hat{y})\equiv(\hat{x}',\hat{y}')\mod\Rad V\;.
	\end{aligned}
    \]
\end{remark}
\begin{remark}
	Observe that $[0,0]\nsim[e,e^{-1}]\nsim [e,0]\nsim[0,0]$, so the set of equivalence classes $\overline{\P(V)}$ contains at least $3$ classes.
\end{remark}

We are now prepared to define a local Moufang set corresponding to $V$ using Construction~\ref{constr:MUtau}. We first define what the elements of $U_\infty$ and $U_0$ will be.
\begin{definition}\label{def:alphazetaJP}For all $v\in V^+$:
	\[\alpha_v : \begin{cases}
		[x,0]\mapsto [x+v,0] & \text{for all $x\in V^+$} \\
		[e,e^{-1}+y]\mapsto [e,e^{-1}+y^v] & \text{for all $y\in \Rad V^-$}
	\end{cases}\]
	For all $w\in V^-$:
	\[\zeta_w : \begin{cases}
		[x,0]\mapsto [x^w,0] & \text{for all $x\in \Rad V^+$} \\
		[e,e^{-1}+y]\mapsto [e,e^{-1}+y+w] & \text{for all $y\in V^-$}
	\end{cases}\]
\end{definition}

\begin{proposition}
	The maps $\alpha_v$ and $\zeta_w$ preserve the radical equivalence on $\P(V)$.
\end{proposition}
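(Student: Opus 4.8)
The plan is to verify directly from the explicit formulas in Definition~\ref{def:alphazetaJP} that $\alpha_v$ and $\zeta_w$ send $\sim$-equivalent points to $\sim$-equivalent points, treating the two ``halves'' of the representative set \eqref{eq:P(V)} separately and using the three clauses of Definition~\ref{def:radequiv}. By the symmetry between the two halves (switching the roles of $0$ and $\infty$, equivalently $V^+$ and $V^-$), it suffices to treat $\alpha_v$ in detail; the argument for $\zeta_w$ is obtained by interchanging the signs.

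For $\alpha_v$, first consider two points $[x,0]\sim[x',0]$ with $x-x'\in\Rad V^+$. Then $[x,0]\alpha_v=[x+v,0]$ and $[x',0]\alpha_v=[x'+v,0]$, and $(x+v)-(x'+v)=x-x'\in\Rad V^+$, so these images are equivalent. Next consider $[e,e^{-1}+y]\sim[e,e^{-1}+y']$ with $y,y'\in\Rad V^-$ and $y-y'\in\Rad V^-$; their images are $[e,e^{-1}+y^v]$ and $[e,e^{-1}+y'^v]$, so I need $y^v-y'^v\in\Rad V^-$. Since $y,y'\in\Rad V^-$, Proposition~\ref{prop:JPbasic} (the statement that quasi-inverses of radical elements stay radical, applied in $(V^-,V^+)$) gives $y^v,y'^v\in\Rad V^-$, and in fact one can compute the difference using the linearity/iteration identities for the quasi-inverse — but since both $y^v$ and $y'^v$ already lie in $\Rad V^-$, their difference is automatically in $\Rad V^-$, which is all that is required. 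Finally, for the ``mixed'' clause: if $[x,0]\not\sim[e,e^{-1}+y]$ because $x\in\Rad V^+$ (resp.\ $y\in\Rad V^-$), I must check $[x+v,0]\not\sim[e,e^{-1}+y^v]$. If $x\in\Rad V^+$, is $x+v\in\Rad V^+$? Not in general — but note that $[e,e^{-1}+y]$ with $y\in\Rad V^-$ is equivalent to $[e,e^{-1}]=[e^{-1},0]^{\,\til{}}$, which under $\alpha_v$ behaves like $[e,e^{-1}]\alpha_v$; more cleanly, one observes that the map $\alpha_v$ permutes the two halves consistently: it maps points with first coordinate invertible to points with first coordinate invertible when one passes through the $[t,0]=[e,e^{-1}-t^{-1}]$ identification of Proposition~\ref{prop:PVrepr}. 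The cleanest way to handle the mixed clause is: $[x,0]\sim[e,e^{-1}+y]$ would force (via \eqref{eq:P(V)} and the ``$t$ non-invertible'' dichotomy in Proposition~\ref{prop:PVrepr}) that $x$ is invertible and $y=e^{-1}-x^{-1}$ up to $\Rad V^-$; applying $\alpha_v$ corresponds to $x\mapsto x+v$ and one checks the invertibility/non-invertibility is preserved because $\alpha_v$ is a bijection respecting the partition of $\P(V)$ into the ``invertible first coordinate'' part (second half together with invertible $t$'s) and its complement. So the mixed clause follows from the observation that $\alpha_v$ maps the set $\{[t,0]:t\in\Rad V^+\}$ onto $\{[x,0]: x-v\in\Rad V^+\}$, a set of pairwise-equivalent points not meeting the $[e,e^{-1}+\Rad V^-]$ block unless $v\in\Rad V^+$, in which case the block is preserved setwise.

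The main obstacle is the mixed clause and, more precisely, making rigorous that $\alpha_v$ (whose definition is given piecewise on the representatives \eqref{eq:P(V)}) is a well-defined, equivalence-respecting map across the ``seam'' between the two halves — i.e.\ checking compatibility on points $[t,0]=[e,e^{-1}-t^{-1}]$ with $t$ invertible, where both pieces of the definition apply. For this I would compute $[t,0]\alpha_v=[t+v,0]$ and, separately, apply the second branch to $[e,e^{-1}-t^{-1}]=[e,e^{-1}+y]$ with $y=-t^{-1}$ (note $y\notin\Rad V^-$ in general, so strictly the second branch as stated only covers $y\in\Rad V^-$; hence on the seam only the first branch applies and there is no ambiguity), and verify $(t+v)-t\in\Rad V^+\iff v\in\Rad V^+$ governs whether the image lands in the same block. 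Once this bookkeeping is in place, the three clauses of Definition~\ref{def:radequiv} are each immediate from $\Rad V^\pm$ being a submodule (closed under addition and under the quasi-inverse operation by Proposition~\ref{prop:JPbasic}), and the symmetric statement for $\zeta_w$ follows by swapping $V^+\leftrightarrow V^-$.
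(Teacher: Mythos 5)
Your overall strategy --- direct verification from the piecewise formulas of Definition~\ref{def:alphazetaJP}, using that quasi-inverses of radical elements stay radical (Proposition~\ref{prop:JPbasic}) --- is the same as the paper's, and your treatment of the first two clauses of Definition~\ref{def:radequiv} for $\alpha_v$ is correct. The mixed clause, however, is where you go astray, and it is in fact the easiest part. The third clause of Definition~\ref{def:radequiv} is a \emph{disjunction}: $[x,0]\nsim[e,e^{-1}+y]$ as soon as $x\in\Rad V^+$ \emph{or} $y\in\Rad V^-$. You have already established that $y\in\Rad V^-$ implies $y^v\in\Rad V^-$, so the images $[x+v,0]$ and $[e,e^{-1}+y^v]$ are inequivalent by the second disjunct alone; whether $x+v$ lies in $\Rad V^+$ is simply irrelevant, and the entire detour that starts from that worry is unnecessary. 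Worse, that detour contains false statements: $[e,e^{-1}]$ has no representative of the form $[t,0]$ (the pair $(e,e^{-1})$ is not quasi-invertible), and your closing claim that the image block meets the $[e,e^{-1}+\Rad V^-]$ block ``unless $v\in\Rad V^+$'' is wrong --- the two blocks of \eqref{eq:P(V)} are disjoint and mutually inequivalent for \emph{every} $v$. The remark about the seam is a fair observation about well-definedness of $\alpha_v$, but it does no work for equivalence preservation.

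The second gap is the reduction of $\zeta_w$ to $\alpha_v$ ``by swapping $V^+\leftrightarrow V^-$.'' That symmetry is not available for free at this point: the two branches of Definition~\ref{def:alphazetaJP} are genuinely asymmetric ($\alpha_v$ is given on all of $\{[x,0]\mid x\in V^+\}$ but only on $\{[e,e^{-1}+y]\mid y\in\Rad V^-\}$, whereas $\zeta_w$ is given on all of $\{[e,e^{-1}+y]\mid y\in V^-\}$ but only on $\{[x,0]\mid x\in\Rad V^+\}$), and no identification of $\P(V)$ with the projective space of the opposite pair has been set up that would make the duality literal. The honest fix costs only a few lines, which is what the paper does: for $x,x'\in\Rad V^+$ both $x^w$ and $x'^w$ lie in $\Rad V^+$, so those pairs remain equivalent; pairs $[e,e^{-1}+y]$, $[e,e^{-1}+y']$ have their difference preserved by $y\mapsto y+w$ (with consistency across the seam guaranteed by Proposition~\ref{prop:JPbasic}\ref{itm:JPbasicInvertRadical}); and the mixed clause again follows from the single fact $x^w\in\Rad V^+$. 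Write that out rather than appealing to an unestablished symmetry.
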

\begin{proof}
	First, $[x,0]\sim[x',0]$ if and only if $x-x'\in\Rad V^+$, which is equivalent to $(x+v)-(x'+v)\in\Rad V^+$, so in this case $\alpha_v$ preserves equivalence. If furthermore $x\in\Rad V^+$ then $[x,0]\sim[x',0]$ if and only if $x'\in\Rad V^+$. Since $x'\in\Rad V^+\iff x'^w\in\Rad V^+$ and $x^w\in\Rad V^+$, we find that $\zeta_w$ also preserves equivalence.

	Similarly, $[e,e^{-1}+y]\sim[e,e^{-1}+y']$ is equivalent to $[e,e^{-1}+y]\zeta_w\sim[e,e^{-1}+y']\zeta_w$ and if $y\in\Rad V^-$, $[e,e^{-1}+y]\sim[e,e^{-1}+y']\iff[e,e^{-1}+y]\alpha_v\sim[e,e^{-1}+y']\alpha_v$.

	Finally, assume $x\in\Rad V^+$ and $y\in\Rad V^-$, so $[x,0]\nsim[e,e^{-1}+y]$. Then $x^w\in\Rad V^+$, so also $[x,0]\zeta_w\nsim[e,e^{-1}+y]\zeta_w$ and $y^v\in\Rad V^-$, so also $[x,0]\alpha_v\nsim[e,e^{-1}+y]\alpha_v$. These cover all cases.
\end{proof}

We will use the set of all $\alpha_v$ to get $U_\infty$, so it only remains to construct $\tau$ in order to have all the ingredients for a local Moufang set. The following proposition describes the action of what will be the $\mu$-maps of the local Moufang set.
The bulk of the computational work of this section is contained in the proof of this proposition.

\begin{proposition}\label{prop:JordanMuaction}
	Let $v\in V^+$ be invertible and set $\mu_{v} = \zeta_{v^{-1}}\alpha_v\zeta_{v^{-1}}$. Then
	\[\begin{cases}
		[e,e^{-1}+y]\mu_{v} = [yQ_v,0] & \text{for $y\in \Rad V^-$}\;, \\
		[e,e^{-1}+y]\mu_{v} = [e,e^{-1}-y^{-1}Q_v^{-1}] & \text{for $y\in V^-\setminus\Rad V^-$}\;, \\
		[x,0]\mu_{v} = [e,e^{-1}+xQ_v^{-1}] & \text{for $x\in \Rad V^+$}\;.
	\end{cases} \]
	As a consequence, $\mu_v^2 = \id$. Using the other representations, we get
    \begin{alignat*}{2}
        &[e,e^{-1}+y]\mu_{v} = [yQ_v,0] && \text{ for all $y\in V^-$,} \\
        &[x,0]\mu_{v} = [e,e^{-1}+xQ_v^{-1}] \quad && \text{ for all $x\in V^+$ and} \\
        &[x,0]\mu_v = [-x^{-1}Q_v,0] && \text{ for all $x\in V^+\setminus\Rad V^+$.}
    \end{alignat*}
\end{proposition}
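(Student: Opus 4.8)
The plan is a direct computation: we push each of the standard representatives of $\P(V)$ from \eqref{eq:P(V)} through the three maps $\zeta_{v^{-1}}$, $\alpha_v$, $\zeta_{v^{-1}}$ in turn, rewriting a representative by Proposition~\ref{prop:PVrepr} (i.e.\ using $[t,0]=[e,e^{-1}-t^{-1}]$ for invertible $t$, or its $V^-$-analogue) whenever an intermediate element passes between $\Rad V$ and the invertible elements. The three displayed formulas correspond to the three types of standard representatives: $[e,e^{-1}+y]$ with $y\in\Rad V^-$; $[x,0]$ with $x\in\Rad V^+$; and the generic case of an invertible first coordinate (recall that $[e,e^{-1}+y]$ with $y$ invertible equals $[-y^{-1},0]$).

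I would start with $[e,e^{-1}+y]$, $y\in\Rad V^-$. Applying $\zeta_{v^{-1}}$ gives $[e,e^{-1}+s]$ with $s:=y+v^{-1}$, which is invertible since $v^{-1}$ is and $y\in\Rad V^-$; by Proposition~\ref{prop:PVrepr} this is $[-s^{-1},0]$. Applying $\alpha_v$ gives $[-s^{-1}+v,0]$, and here $-s^{-1}+v=-(s^{-1}-v)\in\Rad V^+$ by the $V^-$-analogue of Proposition~\ref{prop:JPbasic}\ref{itm:JPbasicInvertRadical} applied to $s\equiv v^{-1}\pmod{\Rad V^-}$, so the final $\zeta_{v^{-1}}$ acts by its first branch and yields $[(-s^{-1}+v)^{v^{-1}},0]$. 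The remaining Jordan-pair identity, $(-s^{-1}+v)^{v^{-1}}=yQ_v$, is short: setting $a:=v-s^{-1}$, Proposition~\ref{prop:JPbasic}\ref{itm:JPbasicA} gives $B_{a,v^{-1}}=Q_{v^{-1}}Q_{a-v}=Q_{v^{-1}}Q_{s^{-1}}$, hence $B_{a,v^{-1}}^{-1}=Q_sQ_v$; using Proposition~\ref{prop:JPbasic}\ref{itm:JPbasicid} and the linearization of $Q$ one finds $a-v^{-1}Q_a=s^{-1}-v^{-1}Q_{s^{-1}}$, whence $a^{v^{-1}}=(s^{-1}-v^{-1}Q_{s^{-1}})Q_sQ_v=sQ_v-v=(s-v^{-1})Q_v=yQ_v$, using $s^{-1}Q_s=s$, $Q_{s^{-1}}Q_s=\id$ and $v^{-1}Q_v=v$.

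The third displayed formula then needs no new computation: since $\alpha_w$ and $\zeta_w$ are inverted by $\alpha_{-w}$ and $\zeta_{-w}$ (immediate from Definition~\ref{def:alphazetaJP} and Proposition~\ref{prop:JPbasic}\ref{itm:JPbasicB}), one has $\mu_v^{-1}=\zeta_{-v^{-1}}\alpha_{-v}\zeta_{-v^{-1}}=\mu_{-v}$; applying the first formula with $-v$ and using $Q_{-v}=Q_v$ gives $[e,e^{-1}+y]\mu_v^{-1}=[yQ_v,0]$ for $y\in\Rad V^-$, and then $[x,0]\mu_v=[e,e^{-1}+xQ_v^{-1}]$ for $x\in\Rad V^+$ follows on applying $\mu_v$ and substituting $y=xQ_v^{-1}\in\Rad V^-$. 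The middle formula is obtained by again tracing $[e,e^{-1}+y]$ with $y$ invertible through the three maps; now $s=y+v^{-1}$ may be invertible or in $\Rad V^-$, giving two sub-cases, but in the invertible sub-case the next element $-s^{-1}+v$ is again invertible (as $s\not\equiv v^{-1}$), so at most two changes of representative occur, and the resulting identity $(-s^{-1}+v)^{-1}=v^{-1}+y^{-1}Q_v^{-1}$ (respectively $s^v+v^{-1}=-y^{-1}Q_v^{-1}$ in the radical sub-case) is proved in the same spirit, now factoring $Q_{v-s^{-1}}=B_{v^{-1},s^{-1}}Q_v$ and invoking Proposition~\ref{prop:JPbasic}\ref{itm:JPbasicA}--\ref{itm:JPbasicQy}. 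Finally $\mu_v^2=\id$ is read directly off the three formulas, using $Q_vQ_v^{-1}=\id$ and $(aQ_b)^{-1}=a^{-1}Q_b^{-1}$ (a consequence of \ref{axiom:JP3}), and the alternative formulas in the last display follow by rewriting each output via Proposition~\ref{prop:PVrepr}.

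I expect the main obstacle to be the collection of Jordan-pair identities of the type ``quasi-inverse / Bergman-operator expression $=yQ_v$'' together with its variants in the middle case: keeping track of which branch of $\alpha_v$ and $\zeta_w$ applies at each step is routine once the radical/invertible dichotomy has been checked at each intermediate element, but reducing the Bergman-operator expressions to the clean right-hand sides takes a careful, somewhat lengthy manipulation using \ref{axiom:JP3}, its linearizations, and parts~\ref{itm:JPbasicid}--\ref{itm:JPbasicQy} of Proposition~\ref{prop:JPbasic}.
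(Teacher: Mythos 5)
Your proposal is correct and follows essentially the same route as the paper's proof: trace the standard representatives through $\zeta_{v^{-1}}\alpha_v\zeta_{v^{-1}}$, switch representatives via Proposition~\ref{prop:PVrepr} at each radical/invertible transition, and reduce the resulting quasi-inverse expressions with the Bergman-operator identities of Proposition~\ref{prop:JPbasic}; your case-1 computation checks out (including the use of $D_{v,v^{-1}}=2\,\id$ to get $a-v^{-1}Q_a=s^{-1}-v^{-1}Q_{s^{-1}}$), and your identified target identities for the middle case coincide with the paper's. The only genuine (and minor) divergences are economies: you verify case 1 directly from the Bergman operator where the paper first applies the symmetry $x^y=x+y^xQ_x$, and you obtain case 3 from case 1 via $\mu_v^{-1}=\mu_{-v}$ where the paper re-traces the maps and reduces the final identity by a substitution.
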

\begin{proof}
For simplicity, we will set $w = v^{-1}$ throughout this proof, so $\mu_v = \zeta_w\alpha_v\zeta_w$, $wQ_v = v$ and $vQ_w=w$.

In the first case, we start with $[e,e^{-1}+y]$ for $y\in \Rad V^-$. We get
\begin{align*}
	[e,e^{-1}+y]\mu_v &= [e,e^{-1}+y]\zeta_w\alpha_v\zeta_w = [e,e^{-1}+y+w]\alpha_v\zeta_w = [-(y+w)^{-1},0]\alpha_v\zeta_w \\
		&= [-(y+w)^{-1}+v,0]\zeta_w = [(-(y+w)^{-1}+v)^w,0]\;.
\end{align*}
Hence, we need to check that $y+w$ is invertible and $-(y+w)^{-1}+v\in\Rad V^+$, and we want to prove $(-(y+w)^{-1}+v)^w = yQ_v$. First, since $y\in\Rad V^-$ and $w$ is invertible, clearly $y+w$ is invertible. Second, take $z\in V^-$ arbitrary, then
\[\bigl(-(y+w)^{-1}+v,z\bigr) \equiv \bigl(-w^{-1}+v,z\bigr)\equiv (0,z)\mod \Rad V\;,\]
so $(-(y+w)^{-1}+v,z)\mod\Rad V$ is quasi-invertible, and by Proposition~\ref{prop:JPbasic}\ref{itm:JPbasicQIlift} that means $(-(y+w)^{-1}+v,z)$ is quasi-invertible. As $z$ was arbitrary, that means $-(y+w)^{-1}+v\in \Rad V^+$. Finally
\begin{align*}
	& (v-(y+w)^{-1})^w = yQ_v \\
\iff	& w^{\bigl(v-(y+w)^{-1}\bigr)} = y + w &\text{(by \ref{prop:JPbasic}\ref{itm:JPbasicswitch})} \\
\iff	& \bigl(w-(v-(y+w)^{-1})Q_w\bigr)B_{w,v-(y+w)^{-1}}^{-1} = y+w \\
\iff	& \bigl(w-vQ_w+(y+w)^{-1}Q_w)\bigr)\bigl(Q_{v-(v-(y+w)^{-1})}Q_w\bigr)^{-1} = y+w &\text{(by \ref{prop:JPbasic}\ref{itm:JPbasicA})} \\
\iff	& (y+w)^{-1}Q_wQ_w^{-1}Q_{(y+w)^{-1}}^{-1} = y+w \\
\iff	& (y+w)^{-1}Q_{y+w} = y+w\;,
\end{align*}
which holds, so the identity holds.

In the second case, we start with $[e,e^{-1}+y]$ for $y\in V^-\setminus\Rad V^-$. By $\zeta_w$, this is mapped to $[e,e^{-1}+y+w]$.
We now distinguish two cases according to whether $y+w\in\Rad V^-$ or not.
Assume first that $y+w\in\Rad V^-$; then
\begin{align*}
	[e,e^{-1}+y]\mu_v &= [e,e^{-1}+y+w]\alpha_v\zeta_w = [e,e^{-1}+(y+w)^v]\zeta_w = [e,e^{-1}+(y+w)^v+w]\;.
\end{align*}
We need to check that $(y+w)^v+w = -y^{-1}Q_v^{-1}$:
\begin{align*}
	& (y+w)^v+w = -y^{-1}Q_v^{-1} \\
\iff	& (v^{y+w}-v)Q_v^{-1}+w = -y^{-1}Q_v^{-1} &\text{(by \ref{prop:JPbasic}\ref{itm:JPbasicswitch})} \\
\iff	& v^{y+w}-v+wQ_v = -y^{-1} \\
\iff	& \bigl(v-(y+w)Q_v\bigr)B_{v,y+w}^{-1} = -y^{-1}  \\
\iff	& \bigl(v-(y+w)Q_v\bigr)(Q_{w-(y+w)}Q_v)^{-1} = -y^{-1} &\text{(by \ref{prop:JPbasic}\ref{itm:JPbasicA})} \\
\iff	& \bigl(vQ_v^{-1}-(y+w)\bigr)Q_{-y}^{-1} = -y^{-1} \\
\iff	& -yQ_y^{-1} = -y^{-1}\;,
\end{align*}
which holds, so the identity holds.

Assume now that $y+w\not\in\Rad V^-$. Then
\begin{align*}
	[e,e^{-1}+y]\mu_v &= [e,e^{-1}+y+w]\alpha_v\zeta_w = [-(y+w)^{-1},0]\alpha_v\zeta_w = [v-(y+w)^{-1},0]\zeta_w \\
			&= [e,e^{-1}+((y+w)^{-1}-v)^{-1}]\zeta_w = [e,e^{-1}+((y+w)^{-1}-v)^{-1}+w]
\end{align*}
We need to show that $v-(y+w)^{-1}$ is invertible, and that $((y+w)^{-1}-v)^{-1}+w = -y^{-1}Q_v^{-1}$. For the first, assume $v-(y+w)^{-1}=x$ was not invertible. Then $y = (v-x)^{-1}-w$, and we find $y = 0\mod\Rad V$, so $y$ would not be invertible, a contradiction. For the identity, we get
\begin{align*}
	& ((y+w)^{-1}-v)^{-1}+w = -y^{-1}Q_w \\
\iff	& v-(y+w)^{-1} = (y^{-1}Q_w+w)^{-1} \\
\iff	& \bigl(v-(y+w)^{-1}\bigr)Q_{y^{-1}Q_w+w} = y^{-1}Q_w+w
\end{align*}
Now first observe that
\begin{align*}
	(y+w)Q_y^{-1} &= (y+w)^{-1}Q_{y+w}Q_y^{-1} \\
				  &= (y+w)^{-1}(Q_{y,w}+Q_y+Q_w)Q_y^{-1} \\
				  &= (y+w)^{-1}Q_{y,w}Q_y^{-1}+(y+w)^{-1} + (y+w)^{-1}Q_wQ_y^{-1}
\end{align*}
Now we get
\begin{align*}
	&\bigl(v-(y+w)^{-1}\bigr)Q_{y^{-1}Q_w+w} = \bigl(v-(y+w)^{-1}\bigr)(Q_{y^{-1}Q_w,w}+Q_{y^{-1}Q_w}+Q_w) \\
	&\;= \bigl(v-(y+w)^{-1}\bigr)(Q_{y^{-1}Q_w,w}+Q_wQ_y^{-1}Q_w+Q_w) \\
	&\;= vQ_{y^{-1}Q_w,w}+vQ_wQ_y^{-1}Q_w+vQ_w-(y+w)^{-1}(Q_{y^{-1}Q_w,w}+Q_wQ_y^{-1}Q_w+Q_w) \\
	&\;= vQ_{y^{-1}Q_w,w}+wQ_y^{-1}Q_w+w-(y+w)^{-1}Q_{y^{-1}Q_w,w}-(y+w)^{-1}Q_wQ_y^{-1}Q_w-(y+w)^{-1}Q_w \\
	&\;= vQ_wD_{w,y^{-1}}+wQ_y^{-1}Q_w+w-(y+w)^{-1}D_{y^{-1},w}Q_w-(y+w)^{-1}Q_wQ_y^{-1}Q_w-(y+w)^{-1}Q_w \\
	&\;= w+wD_{w,y^{-1}}+wQ_y^{-1}Q_w-(y+w)^{-1}D_{y^{-1},w}Q_w-(y+w)^{-1}Q_wQ_y^{-1}Q_w-(y+w)^{-1}Q_w \\
	&\;= w+2y^{-1}Q_w+wQ_y^{-1}Q_w-(y+w)^{-1}D_{y^{-1},w}Q_w-(y+w)^{-1}Q_wQ_y^{-1}Q_w-(y+w)^{-1}Q_w \\
	&\;= w+y^{-1}Q_w+\bigl(y^{-1}+wQ_y^{-1}-(y+w)^{-1}D_{y^{-1},w}-(y+w)^{-1}Q_wQ_y^{-1}-(y+w)^{-1}\bigr)Q_w \\
	&\;= w+y^{-1}Q_w+\bigl((y+w)Q_y^{-1}-(y+w)^{-1}D_{y^{-1},w}-(y+w)^{-1}Q_wQ_y^{-1}-(y+w)^{-1}\bigr)Q_w \\
	&\;= w+y^{-1}Q_w+\bigl((y+w)^{-1}Q_{y,w}Q_y^{-1}-(y+w)^{-1}D_{y^{-1},w}\bigr)Q_w \\
	&\;= w+y^{-1}Q_w
\end{align*}
This finishes the second case.

In the third case, we start with $[x,0]$ for $x\in \Rad V^+$. We get
\begin{align*}
	[x,0]\mu_{v} &= [x,0]\zeta_w\alpha_v\zeta_w = [x^w,0]\alpha_v\zeta_w = [x^w+v,0]\zeta_w \\
		&= [e,e^{-1}-(x^w+v)^{-1}]\zeta_w = [e,e^{-1}-(x^w+v)^{-1}+w]\;.
\end{align*}
We need to check that $x^w+v$ is invertible, and that $-(x^w+v)^{-1}+w = xQ_v^{-1}$. Since $x\in \Rad V^+$, we also have $x^w\in\Rad V^+$, so as $v$ is invertible, so is $x^w+v$. For the second, we need to show
\begin{align*}
		& -(x^w+v)^{-1}+w = xQ_v^{-1} \\
	\iff	& (x^w+v)^{-1} = w-xQ_v^{-1} \\
	\iff	& x^w = (w-xQ_v^{-1})^{-1}-v \\
	\iff	& x = \bigl((w-xQ_v^{-1})^{-1}-v\bigr)^{-w} \\
	\iff	& yQ_v = \bigl((w-y)^{-1}-v\bigr)^{-w} & \text{(set $x = yQ_v$)} \\
	\iff	& yQ_{v'} = \bigl(v'-(w'+y)^{-1}\bigr)^{w'} & \text{(set $w = -w'$ and $v = -v'$)}
\end{align*}
This is precisely the identity we have proven in the first case.
\end{proof}

Now we would like to use the permutations we have to construct a local Moufang set with Construction~\ref{constr:MUtau}. Of course, that requires the conditions for the construction to be satisfied:

\begin{proposition}
	Let $V$ be a local Jordan pair with invertible element $e\in V^+$. The group $U = \{\alpha_v\mid v\in V^+\}$ and permutation $\tau = \mu_e$ satisfy conditions \ref{itm:C1}, \ref{itm:C1'} and \ref{itm:C2} and we can take $0=[0,0]$, $\infty=[e,e^{-1}]$.
\end{proposition}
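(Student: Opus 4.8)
The plan is to verify the three conditions \ref{itm:C1}, \ref{itm:C1'}, \ref{itm:C2} one at a time, reading everything off the explicit formulas already established: Definition~\ref{def:alphazetaJP} for the $\alpha_v$, Proposition~\ref{prop:JordanMuaction} for $\mu_e$, together with the description~\eqref{eq:P(V)} of $\P(V)$ and the radical equivalence of Definition~\ref{def:radequiv}. Before that I would record the group law $\alpha_v\alpha_w=\alpha_{v+w}$: on the representatives $[x,0]$ this is immediate, and on $[e,e^{-1}+y]$ with $y\in\Rad V^-$ it reduces to $y^v\in\Rad V^-$ (Proposition~\ref{prop:JPbasic}, applied in the opposite pair) and $(y^v)^w=y^{v+w}$ (Proposition~\ref{prop:JPbasic}\ref{itm:JPbasicB}). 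This shows $v\mapsto\alpha_v$ is an isomorphism $(V^+,+)\xrightarrow{\sim}U$, so $\alpha_0=\id$ and $U$ is an abelian subgroup of $\Sym(\P(V),\sim)$.

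For \ref{itm:C1} I would first observe that every $\alpha_v$ fixes $\infty=[e,e^{-1}]=[e,e^{-1}+0]$, because $0^v=0$ (here $B_{0,v}=\id$); hence $U$ stabilises the class $\overline\infty$ and permutes its complement. Next, using \eqref{eq:P(V)}, Proposition~\ref{prop:PVrepr} and the third line of Definition~\ref{def:radequiv} (which, with $y=0\in\Rad V^-$, gives $[x,0]\nsim\infty$ for every $x$), I would identify $\overline\infty=\{[e,e^{-1}+y]\mid y\in\Rad V^-\}$ and $\P(V)\setminus\overline\infty=\{[x,0]\mid x\in V^+\}$, these two families being disjoint. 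On the latter set $\alpha_v$ acts by $[x,0]\mapsto[x+v,0]$, i.e.\ the regular action of $(V^+,+)$ on itself (using $[x,0]=[x',0]\iff x=x'$), which is sharply transitive. For \ref{itm:C1'} I would pass to $\overline{\P(V)}$: the rule $[x,0]\sim[x',0]\iff x-x'\in\Rad V^+$ identifies $\overline{\P(V)}\setminus\{\overline\infty\}$ with $V^+/\Rad V^+$, on which $\overline{\alpha_v}$ is translation by $v+\Rad V^+$; since the kernel of $U\to\Sym(\overline{\P(V)})$ is exactly $\{\alpha_v\mid v\in\Rad V^+\}$, the induced group is $V^+/\Rad V^+$ acting regularly, hence sharply transitively.

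For \ref{itm:C2} I would take $\tau=\mu_e=\zeta_{e^{-1}}\alpha_e\zeta_{e^{-1}}$, which is defined since $e$ is invertible, lies in $\Sym(\P(V),\sim)$ as a product of equivalence-preserving maps, and satisfies $\mu_e^2=\id$ by Proposition~\ref{prop:JordanMuaction}. Substituting $y=0$ into the first formula of that proposition gives $\infty\tau=[0\,Q_e,0]=[0,0]$, so $0:=\infty\tau=[0,0]$ as claimed; since $0\in\Rad V^+$, Definition~\ref{def:radequiv} gives $\infty\tau=[0,0]\nsim[e,e^{-1}]=\infty$; and substituting $x=0$ into $[x,0]\mu_v=[e,e^{-1}+xQ_v^{-1}]$ gives $\infty\tau^2=[0,0]\mu_e=[e,e^{-1}]=\infty$.

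I do not expect a real obstacle here: once Proposition~\ref{prop:JordanMuaction} is in hand, \ref{itm:C2} is a one-line substitution, and the only point needing care is the clean identification of $\overline\infty$ and its complement (so that the translation actions in \ref{itm:C1} and \ref{itm:C1'} become visible), together with the homomorphism property of $v\mapsto\alpha_v$. All the genuine computational work of this part of the paper has already been carried out inside the proof of Proposition~\ref{prop:JordanMuaction}.
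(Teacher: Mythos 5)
Your proof is correct and follows essentially the same route as the paper: identify $\overline{\infty}$ with $\{[e,e^{-1}+y]\mid y\in\Rad V^-\}$ and its complement with $\{[x,0]\mid x\in V^+\}$, observe that $\alpha_v$ acts there as the regular representation of $(V^+,+)$ (and of $V^+/\Rad V^+$ on the quotient), and read \ref{itm:C2} off the formulas of Proposition~\ref{prop:JordanMuaction}. The only difference is that you make explicit the homomorphism property $\alpha_v\alpha_w=\alpha_{v+w}$ via $(y^v)^w=y^{v+w}$, which the paper leaves implicit in its appeal to the regular representation; that is a reasonable bit of added care, not a change of method.
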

\begin{proof}
	The group $U$ fixes $[e,e^{-1}]$, as $0^v = (0-vQ_0)B_{0,v}^{-1} = (0-0)\id = 0$, hence we choose $\infty:=[e,e^{-1}]$. Furthermore, $\P(V)\setminus \overline{[e,e^{-1}]} = \{[x,0]\mid x\in V^+\}$, and $\alpha_v$ acts on this set by $x\mapsto x+v$. This action of $U$ on $\P(V)\setminus \overline{[e,e^{-1}]}$ is the regular representation of $(V^+,+)$, and hence a regular action. This proves \ref{itm:C1}.

	For $x\in V^\sigma$, denote $\overline{x}$ for the image of $x$ in the quotient $V^\sigma/\Rad V^\sigma$. Now $\overline{\P(V)}\setminus \{\overline{[e,e^{-1}]}\}$ has a natural correspondence to $\{[\overline{x},\overline{0}]\mid \overline{x}\in V^+/\Rad V^+\}$. The induced action of $\alpha_v$ on this set is given by $\overline{x}\mapsto\overline{x+v}$, which only depends on $\overline{x}$. The action of $\overline{U}$ on $\{[\overline{x},\overline{0}]\mid \overline{x}\in V^+/\Rad V^+\}$ is the regular representation of $(V^+/\Rad V^+,+)$, and hence a regular action. This shows \ref{itm:C1'}.

	By Proposition~\ref{prop:JordanMuaction}, we have $[e,e^{-1}]\tau = [0Q_e,0] = [0,0]$, which is not radically equivalent to $[e,e^{-1}]$. This means we can take $0:=[0,0]$. By the same proposition, $[0,0]\tau = [e,e^{-1}+0Q_e^{-1}] = [e,e^{-1}]$, which proves \ref{itm:C2}.
\end{proof}

\begin{definition}
	Let $V$ be a local Jordan pair with invertible element $e$. Using $U = \{\alpha_v\mid v\in V^+\}$ and $\tau = \mu_e$, we define $\M(V) := \M(U,\tau)$.
\end{definition}

\subsection{Proving that \texorpdfstring{$\M(V)$}{M(V)} is a local Moufang set}

As we have used Construction~\ref{constr:MUtau} to create $\M(V)$, we would like to use one of the equivalent conditions of Theorem~\ref{thm:constrMouf} to prove we have a local Moufang set. In order to do this, we need to know how the maps of type $\alpha_x$, $\gamma_x$ and $\mu_x$ correspond to the maps we have already defined.
Our notation in Definition~\ref{def:alphazetaJP} and Proposition~\ref{prop:JordanMuaction} suggests what this correspondence will be, and we make this precise in Proposition~\ref{prop:gammaJP} below.

In this subsection, we assume that we have a local Jordan pair with invertible element $e\in V^+$, and we set $U = \{\alpha_v\mid v\in V^+\}$ and $\tau = \mu_e$.

\begin{proposition}\label{prop:gammaJP}
	For all $v,t\in V^+$ with $t$ invertible, we have $\alpha_v^{\mu_t} = \zeta_{vQ_t^{-1}}$. Using this, we get $\alpha_{[v,0]} = \alpha_v$, $\gamma_{[v,0]} = \zeta_{vQ_e^{-1}}$ and $\mu_{[t,0]} = \mu_t$. Moreover, $-([t,0])\tau = -([t,0]\tau)$.
\end{proposition}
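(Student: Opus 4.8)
The plan is to verify the four claimed identities in turn, starting from the defining formulas $U = \{\alpha_v \mid v \in V^+\}$, $\tau = \mu_e$, $0 = [0,0]$, $\infty = [e,e^{-1}]$, and the action formulas from Definition~\ref{def:alphazetaJP} and Proposition~\ref{prop:JordanMuaction}.

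\medskip

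\noindent\textbf{Step 1: the conjugation formula $\alpha_v^{\mu_t} = \zeta_{vQ_t^{-1}}$.} Here $\mu_t = \zeta_{t^{-1}}\alpha_t\zeta_{t^{-1}}$ is the map defined in Proposition~\ref{prop:JordanMuaction}, and by that same proposition $\mu_t^2 = \id$, so $\alpha_v^{\mu_t} = \mu_t\alpha_v\mu_t$. Since $\mu_t$ interchanges the two subsets in the decomposition~\eqref{eq:P(V)} of $\P(V)$ — it sends $[e,e^{-1}+y]$ to $[yQ_t,0]$ and $[x,0]$ to $[e,e^{-1}+xQ_t^{-1}]$ — I would compute the composite $\mu_t\alpha_v\mu_t$ on a representative $[x,0]$ (for $x\in\Rad V^+$, where $\zeta$ has a simple form) and on $[e,e^{-1}+y]$, and check that in each case the result matches the action of $\zeta_{vQ_t^{-1}}$ described in Definition~\ref{def:alphazetaJP}. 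The key algebraic point will be tracking how the quasi-inverse $y\mapsto y^v$ transforms under the $Q_t$-conjugation; Proposition~\ref{prop:JPbasic}\ref{itm:JPbasicQy} (the identity $(xQ_y)^z = x^{zQ_y}Q_y$) is exactly the tool for this, and I expect it to be the technical heart of the argument. Once this is established, the remaining three statements follow quickly.

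\medskip

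\noindent\textbf{Step 2: identifying $\alpha_{[v,0]}$, $\gamma_{[v,0]}$, $\mu_{[t,0]}$.} By Construction~\ref{constr:MUtau}, $\alpha_{[v,0]}$ is the unique element of $U$ sending $0=[0,0]$ to $[v,0]$; since $[0,0]\alpha_v = [0+v,0] = [v,0]$ and $\alpha_v\in U$, uniqueness gives $\alpha_{[v,0]} = \alpha_v$. Then $\gamma_{[v,0]} := \alpha_{[v,0]}^\tau = \alpha_v^{\mu_e} = \zeta_{vQ_e^{-1}}$ by Step~1. For the $\mu$-map: Construction~\ref{constr:MUtau} defines $\mu_{[t,0]} = \gamma_{(-[t,0])\tau^{-1}}\,\alpha_{[t,0]}\,\gamma_{-([t,0]\tau^{-1})}$ when $[t,0]$ is a unit, i.e.\ when $t$ is invertible. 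Using $\tau = \mu_e$ and $\tau^{-1} = \mu_e$ (as $\mu_e^2=\id$), together with the Proposition~\ref{prop:JordanMuaction} formula $[t,0]\mu_e = [-t^{-1}Q_e,0]$ for invertible $t$, I would compute the two $\gamma$-indices explicitly and then, via $\gamma_{[u,0]} = \zeta_{uQ_e^{-1}}$ from the previous line, show the product collapses to $\zeta_{t^{-1}}\alpha_t\zeta_{t^{-1}} = \mu_t$. This is a short chain of substitutions once the $\gamma$-formula is in hand; the one subtlety is keeping the signs straight in $-([t,0])$ versus $-([t,0]\tau^{-1})$, using $-[t,0] = [0,0]\alpha_{[t,0]}^{-1} = [-t,0]$.

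\medskip

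\noindent\textbf{Step 3: speciality at $[t,0]$, i.e.\ $-([t,0])\tau = -([t,0]\tau)$.} From $-[t,0] = [-t,0]$ and the Proposition~\ref{prop:JordanMuaction} formula, the left side is $[-t,0]\mu_e = [-(-t)^{-1}Q_e,0] = [t^{-1}Q_e,0]$ (using $(-t)^{-1} = -t^{-1}$, which holds since $Q_{-t}=Q_t$). The right side is $-\big([t,0]\mu_e\big) = -[-t^{-1}Q_e,0] = [t^{-1}Q_e,0]$. So both sides agree. The only thing to be careful about is the case distinction when $t^{-1}Q_e$ is or is not invertible — but the formulas $[x,0]\mapsto[-x^{-1}Q_v,0]$ and the negation $-[x,0]=[-x,0]$ from Construction~\ref{constr:MUtau} are valid regardless, so no genuine obstacle arises here. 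I expect Step~1 to be the main obstacle; Steps~2 and~3 are essentially bookkeeping once the conjugation formula and the $\gamma$-identity are available.
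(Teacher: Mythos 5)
Your proposal is correct and follows essentially the same route as the paper: compute $\mu_t\alpha_v\mu_t$ pointwise on the representatives $[e,e^{-1}+y]$ and $[x,0]$ with $x\in\Rad V^+$ using Proposition~\ref{prop:JordanMuaction}, with Proposition~\ref{prop:JPbasic}\ref{itm:JPbasicQy} as the key identity in the radical case, and then deduce the remaining three statements by the uniqueness in Construction~\ref{constr:MUtau} and direct substitution. The only cosmetic difference is that for the speciality claim the paper rewrites $[-t,0]$ as $[e,e^{-1}+t^{-1}]$ before applying $\tau$, whereas you apply the formula $[x,0]\mu_e=[-x^{-1}Q_e,0]$ directly; both give $[t^{-1}Q_e,0]$.
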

\begin{proof}
	We compute the action of $\alpha_v^{\mu_t}$ on the points of $\P(V)$ using Proposition~\ref{prop:JordanMuaction}. First, take $[e,e^{-1}+y]$ with $y\in V^-$. We get
	\begin{align*}
		[e,e^{-1}+y]\mu_t^{-1}\alpha_v\mu_t &= [yQ_t,0]\alpha_v\mu_t = [yQ_t+v,0]\mu_t = [e,e^{-1}+(yQ_t+v)Q_t^{-1}] \\
			&= [e,e^{-1}+y+vQ_t^{-1}] = [e,e^{-1}+y]\zeta_{vQ_t^{-1}}\;.
	\end{align*}
	Next, take $x\in\Rad V^+$, then $xQ_t^{-1}\in\Rad V^-$, so
	\begin{align*}
		[x,0]\mu_t^{-1}\alpha_v\mu_t &= [e,e^{-1}+xQ_t^{-1}]\alpha_v\mu_t = [e,e^{-1}+(xQ_t^{-1})^v]\mu_t = [e,e^{-1}+(xQ_t^{-1})^v]\mu_t \\
			&= [(xQ_t^{-1})^vQ_t,0] = [x^{vQ_t^{-1}},0] = [x,0]\zeta_{vQ_t^{-1}}\;,
	\end{align*}
	where we used Proposition~\ref{prop:JPbasic}\ref{itm:JPbasicQy}. Hence for all points of $\P(V)$, the image of $\zeta_{vQ_y}$ is equal to that of $\alpha_v^{\mu_t}$, so these permutations are equal.

	For the other statements, observe first that $\alpha_{[v,0]} = \alpha_v$ since $\alpha_v$ is the unique element of $U$ mapping $[0,0]$ to $[v,0]$, and by definition $\gamma_{[v,0]} = \alpha_{[v,0]}^\tau = \alpha_v^{\mu_e} = \zeta_{vQ_e^{-1}}$. Finally, if $t$ is invertible, we have
	\[(-[t,0])\tau = [-t,0]\tau = [e,e^{-1}+t^{-1}]\tau = [t^{-1}Q_e,0]\]
	and similarly $-([t,0]\tau) = [t^{-1}Q_e,0]$, which shows the last statement. Using the definition of $\mu_{[t,0]}$ in Construction~\ref{constr:MUtau}, we get
	\[\mu_{[t,0]} := \gamma_{(-[t,0])\tau^{-1}}\alpha_{[t,0]}\gamma_{-([t,0]\tau^{-1})} = \gamma_{[t^{-1}Q_e,0]}\alpha_{[t,0]}\gamma_{[t^{-1}Q_e,0]} = \zeta_{t^{-1}}\alpha_t\zeta_{t^{-1}} = \mu_t\;.\qedhere\]
\end{proof}

As we now know what all the maps of Construction~\ref{constr:MUtau} are, we can use them to show $\M(V)$ is a local Moufang set.

\begin{theorem}\label{thm:MV}
	Let $V$ be a local Jordan pair with invertible element $e$. Set $U:=\{\alpha_v\mid v\in V^+\}$ and $\tau = \mu_e$,
    where $\alpha_v$ and $\mu_e$ are as in Definition~\ref{def:alphazetaJP} and Proposition~\ref{prop:JordanMuaction}, respectively.
    Then $\M(V) = \M(U,\tau)$ is a local Moufang set.
\end{theorem}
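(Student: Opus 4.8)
The strategy is to verify one of the three equivalent conditions of Theorem~\ref{thm:constrMouf}; condition~\ref{lem:gleq5}, namely $U_0 = U_\infty^{\mu_x}$ for all units $x$, looks most convenient since we have an explicit description of $\mu_v$ for invertible $v\in V^+$ and of its conjugation action on root groups. First I would identify the units of $\M(V)$: a point $[x,0]$ is a unit precisely when $x\in V^+$ is invertible, and a point $[e,e^{-1}+y]$ is never a unit since $y\in\Rad V^-$ forces it to be equivalent to $\infty=[e,e^{-1}]$. So the units are exactly the points $[v,0]$ with $v$ invertible. By Proposition~\ref{prop:gammaJP}, for such a unit we have $\mu_{[v,0]} = \mu_v = \zeta_{v^{-1}}\alpha_v\zeta_{v^{-1}}$, and that proposition already gives $\alpha_w^{\mu_v} = \zeta_{wQ_v^{-1}}$ for all $w\in V^+$.

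From this, $U_\infty^{\mu_v} = \{\alpha_w\mid w\in V^+\}^{\mu_v} = \{\zeta_{wQ_v^{-1}}\mid w\in V^+\}$. Since $v$ is invertible, $Q_v^{-1}$ is a bijection of $V^+$ onto $V^-$ (mapping $V^+$ to $\Hom(V^-,V^+)$ inputs — more precisely $wQ_v^{-1}$ ranges over all of $V^-$ as $w$ ranges over $V^+$, because $Q_v\colon V^-\to V^+$ is invertible with inverse $Q_v^{-1}\colon V^+\to V^-$), so $U_\infty^{\mu_v} = \{\zeta_u\mid u\in V^-\}$, which is independent of the chosen unit $v$. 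It therefore remains to check that this common group $\{\zeta_u\mid u\in V^-\}$ equals $U_0 = U_\infty^{\tau} = U_\infty^{\mu_e}$. But $\tau=\mu_e$ is itself $\mu_{[e,0]}$ for the unit $[e,0]$ (note $e$ is invertible), so $U_0 = U_\infty^{\mu_e} = \{\zeta_{wQ_e^{-1}}\mid w\in V^+\} = \{\zeta_u\mid u\in V^-\}$ by the same computation. Hence $U_0 = U_\infty^{\mu_x}$ for every unit $x$, and Theorem~\ref{thm:constrMouf} applies, so $\M(V)$ is a local Moufang set.

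The main thing to be careful about — and the step I would present in most detail — is the bookkeeping around $\tau$ versus the $\mu$-maps of the construction. In Construction~\ref{constr:MUtau}, the symbol $\mu_x$ is \emph{defined} by the formula $\mu_x := \gamma_{(-x)\tau^{-1}}\alpha_x\gamma_{-(x\tau^{-1})}$, and it is not a priori obvious that this agrees with the geometrically defined map $\zeta_{v^{-1}}\alpha_v\zeta_{v^{-1}}$ from Proposition~\ref{prop:JordanMuaction}; this agreement is exactly the content of the last line of Proposition~\ref{prop:gammaJP}, which in turn relies on the identity $(-[t,0])\tau = -([t,0]\tau)$ proved there. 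So the clean way to organize the argument is: (1) cite Proposition~\ref{prop:gammaJP} for $\mu_{[t,0]} = \mu_t$ and $\alpha_w^{\mu_t} = \zeta_{wQ_t^{-1}}$; (2) observe that the units are precisely the $[v,0]$ with $v\in V^+$ invertible; (3) compute $U_\infty^{\mu_{[v,0]}} = \{\zeta_u\mid u\in V^-\}$ using invertibility of $Q_v$; (4) note the special case $v=e$ gives $U_0=U_\infty^\tau = \{\zeta_u\mid u\in V^-\}$; (5) conclude via Theorem~\ref{thm:constrMouf}\ref{lem:gleq5}. No genuinely hard computation is needed here — all the heavy lifting was already done in Proposition~\ref{prop:JordanMuaction} and Proposition~\ref{prop:gammaJP} — so the only real obstacle is making sure the identification of units and the surjectivity of $w\mapsto wQ_v^{-1}$ are stated correctly.
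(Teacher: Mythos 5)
Your proposal is correct and follows essentially the same route as the paper: it verifies condition~\ref{lem:gleq5} of Theorem~\ref{thm:constrMouf} by using Proposition~\ref{prop:gammaJP} to get $\mu_{[t,0]}=\mu_t$ and $\alpha_v^{\mu_t}=\zeta_{vQ_t^{-1}}$, and then uses invertibility of $Q_t$ (and of $Q_e$ for the case $\tau=\mu_e$) to identify both $U_\infty^{\mu_{[t,0]}}$ and $U_0$ with $\{\zeta_w\mid w\in V^-\}$. The additional care you take in identifying the units and in checking that the construction's $\mu$-map agrees with the geometric one is sound and consistent with the paper.
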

\begin{proof}
	In Construction~\ref{constr:MUtau}, we have $U_0 := U^\tau = \{\zeta_{vQ_e^{-1}}\mid v\in V^+\} = \{\zeta_w \mid w\in V^-\}$, where the final equality follows from the fact that $Q_e$ is invertible. Now let $[t,0]$ be an arbitrary unit in $\P(V)$, then $\mu_{[t,0]} = \mu_t$, so
	\[U^{\mu_{[t,0]}} = \{\alpha_v^{\mu_t}\mid v\in V^+\} = \{\zeta_{vQ_t^{-1}}\mid v\in V^+\} = \{\zeta_w\mid w\in V^-\} = U_0\;,\]
	since $t$, and hence $Q_t$ is invertible. Hence $U_0 = U_\infty^{\mu_{[t,0]}}$ for all units $[t,0]$, and Construction~\ref{constr:MUtau} gives a local Moufang set by Theorem~\ref{thm:constrMouf}.
\end{proof}

\section{From local Moufang sets to local Jordan pairs}\label{sec:LMStoJP}

\subsection{The construction and basic properties}

We now investigate the reverse construction: we try to make a local Jordan pair starting from a local Moufang set satisfying some additional assumptions.
One obvious necessary assumption is that the root groups have to be abelian, and by Proposition~\ref{prop:gammaJP}, we also know that the local Moufang set has to be special.
We will also impose a restriction to avoid the cases where $V/\Rad V$ has characteristic $2$ or $3$.
Finally, we will need a linearity assumption.

Notice that for a given Jordan pair $V$, the Moufang set $\M(V)$ cannot detect the base ring~$k$ over which the Jordan pair was initially defined.
For this reason, the Jordan pair that we will (try to) construct will be defined over the base ring $\Z$, i.e., it will consist of a pair of $\Z$-modules.

\begin{construction}\label{constr:Jpair}
	Suppose $\M$ is a local Moufang set satisfying the following properties:
	\begin{enumerate}[label=\textnormal{(J\arabic*)}]
		\item $\M$ is special;\label{itm:J1}
		\item $U_\infty$ is abelian;\label{itm:J2}
		\item if $x$ is a unit, then so is $x\cdot2$ and $x\cdot 3$.\label{itm:J3}
	\end{enumerate}
	Then we define two $\Z$-modules as follows:
	\begin{itemize}
		\item $V^+ := X\setminus\overline{\infty}$ with $x+z:=0\alpha_x\alpha_z$;
		\item $V^- := X\setminus\overline{0}$ with $y\plustil w:=\infty\gamma_{y\tau}\gamma_{w\tau}$.
	\end{itemize}
	Now we have, for all $x,z\in V^+$ and units $t$, $(x+z)\mu_t = x\mu_t\plustil z\mu_t$, and similarly for all $y,w\in V^-$, $(y\plustil w)\mu_t = y\mu_t+ w\mu_t$. Hence $\mu$-maps are group isomorphisms between $V^+$ and $V^-$. We now define the following maps:
	\begin{alignat}{2}
		\mu_{x,z} &:= \mu_{x+z}-\mu_x-\mu_z:V^-\to V^+&&\text{for units $x,z\in V^+$ such that $x+z$ is a unit;}\label{map:muxz} \\
		\mutil_{y,w} &:= \mu_{y\plustil w}\mintil\mu_y\mintil\mu_w:V^+\to V^- \quad &&\text{for units $y,w\in V^+$ such that $y\plustil w$ is a unit.}\label{map:muyw}
	\end{alignat}
	The final assumption we make is the following:
	\begin{enumerate}[label=\textnormal{(J\arabic*)},resume]
		\item There are bilinear maps
		\[\mu_{\cdot,\cdot}\colon V^+ \times V^+ \to \Hom(V^-, V^+) \text{ and }\mutil_{\cdot,\cdot}\colon V^- \times V^- \to \Hom(V^+, V^-)\]
		that extend \eqref{map:muxz} and \eqref{map:muyw}. \label{itm:J4}
	\end{enumerate}
	We now have a pair of $\Z$-modules $(V^+,V^-)$ and bilinear maps $\mu_{\cdot,\cdot}$ and $\mutil_{\cdot,\cdot}$ which will define a local Jordan pair, as will be shown in Theorem~\ref{thm:localJP}.
\end{construction}
\begin{remark}
	By \hyperref[itm:J1]{(J1-2)}, $\tau$ is an involution, so $\gamma_{y\tau}$ is the unique element of $U_0$ mapping $\infty$ to $y$, and hence it does not depend on $\tau$. In particular, $y\plustil w$ does not depend on the choice of $\tau$.
\end{remark}

\begin{remark}\label{rem:expression_muxz}
	If we have a local Moufang set satisfying \hyperref[itm:J1]{(J1-4)}, we can express $\mu_{x,z}$ (and similarly $\mutil_{y,w}$) in terms of $\mu$-maps for any pair of $x,z\in V^+$ by the linearity:
	\begin{align*}
		\mu_{x,z} &= \mu_{x+z}-\mu_x-\mu_z &&\text{if $x$, $z$ and $x+z$ are units;} \\
		\mu_{x,z} &:= -\mu_{-x,z} &&\text{if $x$, $z$ are units but $x+z$ is not a unit;} \\
		\mu_{x,z} &:= \mu_{x,x+z}-\mu_{x,x} &&\text{if $x$ is a unit but $z$ is not a unit;} \\
		\mu_{x,z} &:= \mu_{z,x} &&\text{if $z$ is a unit but $x$ is not a unit;} \\
		\mu_{x,z} &:= \mu_{x+e,z}-\mu_{e,z} &&\text{if $x$, $z$ are not units, and $e$ is an arbitrary unit.}
	\end{align*}
\end{remark}

For the remainder of this section, we will always assume that we have a local Moufang set satisfying \hyperref[itm:J1]{(J1-4)}.
We start by showing some basic identities, which will help to show that the construction gives us a Jordan pair.

\begin{lemma}\label{lem:JMS_identities}
	In a local Moufang set where \hyperref[itm:J1]{\normalfont{(J1-4)}} holds, we have the following identities:
	\begin{enumerate}
		\item $t\mu_{t,x} = -x\cdot 2$ \quad for all units $t$ and all $x\in V^+$;\label{itm:xxy}
		\item $y\mu_{t,t} = y\mu_t\cdot 2$ \quad for all units $t$ and all $y\in V^-$\label{itm:xyx};
		\item $\mu_s\mu_{s,t}\mu_t = \mu_t\mu_{s,t}\mu_s = \mutil_{s,t}$ \quad for all units $s,t$;\label{itm:reverse_order}
		\item $s\mu_t\mu_{s,t} = -t\mu_s\cdot 2$ \quad for all units $s,t$;\label{itm:xxmuyy}
		\item $y\mu_{x,z}\tau = y\tau\mutil_{x\tau,y\tau}$ \quad for all $x,z\in V^+$ and $y\in V^-$.
	\end{enumerate}
\end{lemma}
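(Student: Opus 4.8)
The plan is to prove the five identities of Lemma~\ref{lem:JMS_identities} by unwinding the definitions in Construction~\ref{constr:Jpair}, reducing everything to identities among $\mu$-maps that are already available from Section~\ref{sec:SpecialLMS}, and then extending from the ``unit'' case to the general case using the bilinearity assumption \ref{itm:J4}. I would establish them roughly in the order (i) $\to$ (ii) $\to$ (iii) $\to$ (iv) $\to$ (v), since the later ones lean on the earlier ones.

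For (i): when $t$ and $x$ and $t+x$ are all units, $t\mu_{t,x} = t\mu_{t+x} - t\mu_t - t\mu_x$. By Lemma~\ref{lem:specialmu}\ref{itm:special-iii} we have $t\mu_t = -t$, and $t\mu_{t+x}$ can be handled via Lemma~\ref{lem:specialsum} (or, more directly, by writing $t\mu_{t+x}$ using the sum formula for $\mu_{t+x}$ applied to $t = 0\alpha_t$ and expanding with the commutativity of $U_\infty$). The goal is to collapse the right-hand side to $-x - x = -x\cdot 2$. Once it holds for $x$ a unit with $t+x$ a unit, the case where $x$ is a unit but $t+x$ is not follows from the line $\mu_{x,z} := -\mu_{-x,z}$ in Remark~\ref{rem:expression_muxz} together with $t\mu_{t,-x} = -(-x)\cdot 2$; and the case where $x$ is not a unit follows from $\mu_{t,z} := \mu_{t,t+z} - \mu_{t,t}$ and the additivity of $\cdot 2$. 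Identity (ii) is the mirror image of (i) under interchanging the roles of $0$ and $\infty$ (so interchanging $V^+ \leftrightarrow V^-$, $\mu \leftrightarrow \mutil$, $\cdot \leftrightarrow \cdottil$), combined with the fact that $\mu$-maps are group isomorphisms $V^+ \to V^-$ carrying $\cdot n$ to $\cdottil n$; alternatively one applies a $\mu$-map to (i). For (iii): since $\mu$-maps are group isomorphisms exchanging $V^+$ and $V^-$ (the displayed compatibility $(x+z)\mu_t = x\mu_t \plustil z\mu_t$ in the Construction), conjugating a sum-defined map like $\mu_{s,t} = \mu_{s+t} - \mu_s - \mu_t$ by another $\mu$-map and using Proposition~\ref{prop:mu-involution}\ref{itm:mucommute} ($\mu_s\mu_{s+t}\mu_t$-type relations) should give $\mu_s\mu_{s,t}\mu_t = \mutil_{s,t}$; symmetry of the middle term in $s,t$ (again from abelianness of $U_\infty$) yields the other order. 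Then (iv) is just (iii) evaluated appropriately: $s\mu_t\mu_{s,t}$; write $\mu_t\mu_{s,t} = \mu_s\mu_{s,t}\mu_t\cdot(\text{something})$ — more precisely, use that $s\mu_t\mu_{s,t}\mu_s = s\mutil_{s,t}\mu_s$ or push through (iii) and (i)/(ii), landing on $-t\mu_s\cdot 2$ via identity (i) after transport by $\mu_s$. Finally (v) is the statement that $\tau$ intertwines $\mu_{\cdot,\cdot}$ with $\mutil_{\cdot,\cdot}$ after applying $\tau$ to the arguments; on units this is immediate from the definitions together with $\mu_{x\tau} = \mu_{-x}^\tau = \mu_x^\tau$ (Lemma~\ref{prop:mu}\ref{itm:mutau} and Proposition~\ref{prop:mu-involution}\ref{itm:mu-involution}) and the behavior $\alpha_x^\tau, \gamma$ under conjugation from Lemma~\ref{prop:huaAut}; the general case follows from bilinearity \ref{itm:J4} since $\tau$ is $\Z$-linear on the relevant modules (it is a group isomorphism $V^+ \to V^-$).

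The main obstacle I anticipate is bookkeeping in (i) and (iii): expanding $t\mu_{t+x}$ or conjugating $\mu_{s+t}$ by a $\mu$-map requires carefully tracking the three-letter products of $\alpha$'s and $\mu$'s, repeatedly invoking the abelianness of $U_\infty$ to commute $\alpha$-factors past each other, and applying Proposition~\ref{prop:mu-involution} (especially \ref{itm:mucommute}, the relation $\mu_x\mu_{x\alpha_y}\mu_y = \mu_y\mu_{x\alpha_y}\mu_x = \mu_{(x\tau\alpha_{y\tau})\tau}$) at exactly the right moment. The delicate point is that $x\alpha_y$, equivalently $x+y$, must be a unit for $\mu_{x\alpha_y}$ to be defined, which is precisely why the unit case must be isolated first and the non-unit cases reduced to it afterward via Remark~\ref{rem:expression_muxz}. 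Once the unit cases are in hand, the extensions by linearity are routine: each of the five identities is $\Z$-multilinear in the non-unit arguments (e.g. (i) is linear in $x$, (v) is linear in $x$, $z$ and $y$), so agreement on units — which span after adding a fixed unit $e$, as in the last line of Remark~\ref{rem:expression_muxz} — forces agreement everywhere.
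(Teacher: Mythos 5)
Your treatment of (i), (iii), (iv) and (v) follows essentially the same route as the paper: (i) via Lemma~\ref{lem:specialsum} plus $t\mu_t=-t$ and then the reductions of Remark~\ref{rem:expression_muxz}; (iii) via Proposition~\ref{prop:mu-involution}\ref{itm:mucommute} applied to $\mu_s\mu_{s+t}\mu_t$ and the additivity of $\tau$; (iv) by transporting (i) through (iii); (v) from $\mu_{w\tau}=\mu_w^{\tau}$ and linearity. Those parts are fine as sketches.

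There is, however, a genuine gap in your argument for (ii). You claim that $y\mu_{t,t}=y\mu_t\cdot 2$ is ``the mirror image of (i) under interchanging the roles of $0$ and $\infty$,'' or alternatively follows by ``applying a $\mu$-map to (i).'' Neither works. The mirror image of (i) is the statement $t\mutil_{t,y}=\mintil y\cdottil 2$ (this is exactly what the Remark following the lemma records), which concerns $\mutil_{t,\cdot}$ evaluated at the fixed point $t$; it says nothing about $\mu_{t,t}$ evaluated at an arbitrary $y\in V^-$. Likewise, postcomposing (i) with a $\mu$-map only transforms the output $-x\cdot2$; it cannot change the fact that (i) determines the operator $\mu_{t,x}$ at the single argument $t$ (and, by symmetry of the subscripts, at $x$), whereas (ii) is a statement about the operator $\mu_{t,t}$ on \emph{all} of $V^-$ --- setting $x=t$ in (i) recovers (ii) only for $y=t$. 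The ingredient you are missing is the quadratic scaling law of Proposition~\ref{prop:mu-x.s}\ref{itm:mu-x.s}, namely $y\mu_{t\cdot 2}=y\mu_t\cdot 4$ for $y\nsim 0$; combined with $\mu_{t,t}=\mu_{t+t}-\mu_t-\mu_t=\mu_{t\cdot2}-\mu_t\cdot 2$ this gives $y\mu_{t,t}=y\mu_t\cdot4-y\mu_t\cdot2=y\mu_t\cdot2$ directly. Without that input (or an equivalent), (ii) does not follow from (i), and since your proof of (iv) leans on transporting values through these operators, you should repair (ii) before relying on it downstream.
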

\begin{proof}\leavevmode
	\begin{enumerate}
		\item First assume $x,t$ are units such that $x+t$ is also a unit. Then by Lemma~\ref{lem:specialsum}, we have
		\[x\mu_{x+t} = -t\cdot2-x+x\mu_t\;,\]
		hence
		\[x\mu_{x,t} = -t\cdot2-x+x\mu_t - x\mu_x - x\mu_t = -t\cdot2\;,\]
		since $x\mu_x = -x$. If $x+t$ is not a unit, we can replace $t$ by $-t$ (as $t\cdot2$ is a unit by \ref{itm:J3}, $x-t=x+t-(t\cdot 2)$ is a unit) and get
		\[x\mu_{x,t} = -x\mu_{x,-t} = -(t\cdot 2) = -t\cdot 2\;.\]
		Finally, if $t$ is not a unit, we can take any unit $e$ and get
		\[x\mu_{x,t} = x\mu_{x,t-e}+x\mu_{x,e} = -(t-e)\cdot 2-e\cdot 2 = -t\cdot2\;.\]
		\item We have $\mu_{t,t} = \mu_{t\cdot 2}-\mu_t\cdot 2$. By Proposition~\ref{prop:mu-x.s}\ref{itm:mu-x.s}, we get $y\mu_{t\cdot 2} = y\mu_t\cdot 4$, so
		\[y\mu_{t,t} = y\mu_t\cdot 4 - y\mu_t\cdot 2 = y\mu_t\cdot 2\;.\]
		\item Assume first that $s+t$ is also a unit. By Proposition~\ref{prop:mu-involution}\ref{itm:mucommute}, we then have $\mu_s\mu_{s+t}\mu_t = \mu_t\mu_{s+t}\mu_s = \mu_{(s\tau+t\tau)\tau}$. By the linearity of $\tau$, we have $\mu_{(s\tau+t\tau)\tau} = \mu_{s\plustil t}$, so we can now use the definitions of $\mu_{s,t}$ and $\mutil_{s,t}$ to get
		\begin{align*}
			\mu_s(\mu_{s,t}+\mu_s+\mu_t)\mu_t &= \mu_t(\mu_{s,t}+\mu_s+\mu_t)\mu_s = \mutil_{s,t}\plustil\mu_s\plustil\mu_t\;,
			\intertext{so since all $\mu$-maps are involutions,}
			\mu_s\mu_{s,t}\mu_t \plustil \mu_t \plustil \mu_s &= \mu_t\mu_{s,t}\mu_s \plustil \mu_t \plustil \mu_s = \mutil_{s,t}\plustil\mu_s\plustil\mu_t\;,
			\intertext{hence}
			\mu_s\mu_{s,t}\mu_t &= \mu_t\mu_{s,t}\mu_s = \mutil_{s,t}\;.
		\end{align*}
		If $s+t$ is not a unit, we can replace $t$ by $-t$ and use linearity the get the result.
		\item From \ref{itm:xxy} we know $-s\mu_{s,t} = t\cdot 2$. Hence, using \ref{itm:reverse_order}, we get
		\[t\cdot 2 = s\mu_s\mu_{s,t} = s\mu_t\mu_{s,t}\mu_s\mu_t\;.\]
		Applying $\mu_t\mu_s$ to both sides, we get
		\[t\mu_t\mu_s\cdot 2 = s\mu_t\mu_{s,t}\quad\text{ and hence }\quad s\mu_t\mu_{s,t} = -t\mu_s\cdot 2\;.\]
		\item Assume first that $x$, $z$ and $x+z$ are units. Then we have
		\begin{align*}
			y\mu_{x,z}\tau &= y(\mu_{x+z}-\mu_x-\mu_z)\tau \\
				&= y\tau(\mu_{(x+z)\tau}\mintil\mu_{x\tau}\mintil\mu_{z\tau}) \\
				&= y\tau(\mu_{x\tau\plustil z\tau}\mintil\mu_{x\tau}\mintil\mu_{z\tau}) \\
				&= y\tau\mutil_{x\tau,z\tau}\;.
		\end{align*}
		By linearity, this identity now holds for all $x$ and $z$ in $V^+$.\qedhere
	\end{enumerate}
\end{proof}

\begin{remark}
	Since $V^+$ and $V^-$ play the same role in the construction (our choice of $0$ and $\infty$ could have been reversed), any identity we have proven will also hold with $+$ and $-$ interchanged. For example, the identity $t\mutil_{t,y} = \mintil y\cdottil 2$ also holds for all units $t$ and all $y\in V^-$.
\end{remark}

In the next two subsections, we will prove the axioms \ref{axiom:JP1} and \ref{axiom:JP2} of a Jordan pair. In the process, we will also show the linearizations of those axioms, so by Proposition~\ref{prop:JPsufficientaxioms}\ref{itm:sufficient_JP}, we will then have shown that we have a Jordan pair, since assumption \ref{itm:J3} says in particular that there is no $2$-torsion.
To prove \ref{axiom:JP1} and \ref{axiom:JP2}, we will first restrict everything to units.

\subsection{Proving the axioms for units}

We first prove \ref{axiom:JP1} by linearizing some of the basic identities we have shown earlier. The proof is based on ideas from the proof of Theorem 5.11 in \cite{MR2425693}. Remark that $\mu$-maps will correspond to the quadratic maps of the Jordan pair, and $\mu_{\cdot,\cdot}$ (and $\mutil_{\cdot,\cdot}$) will correspond to the bilinearizations $Q_{\cdot,\cdot}$. In these terms, \ref{axiom:JP1} translates to $yQ_{x,zQ_x} = xQ_{y,z}Q_x$, or in the local Moufang set: $y\mutil_{x,z\mu_x} = x\mu_{y,z}\mu_x$. Up to renaming, this is the identity we will prove for units:

\begin{proposition}\label{prop:JMS_JP1_units}
	In a local Moufang set where \hyperref[itm:J1]{\normalfont{(J1-4)}} holds, we have the following identities:
	\begin{enumerate}
		\item $\mu_{r,s}\mu_s\mu_{r,t}+\mu_{t,s}\mu_s\mu_r = \mu_{r,t}\mu_s\mu_{r,s}+\mu_r\mu_s\mu_{t,s}$ \quad for all units $r,s,t$;\label{itm:linearize_commuting}
		\item $r\mu_s\mu_{t,s}+t\mu_s\mu_{r,s} = -s\mu_{r,t}\cdot 2$ \quad for all units $r,s,t$;\label{itm:linearize_xxmuyy}
		\item $r\mu_s\mu_{t,s} = t\mutil_{r,s}\mu_r$ \quad for all units $r,s,t$;\label{itm:JMS_JP1_units_iii}
		\item $x\mutil_{z\mu_y,y} = y\mu_{x,z}\mu_y = z\mutil_{x\mu_y,y}$ \quad for all units $x,z\in V^+$ and all units $y\in V^-$.\label{itm:JP1_units}
	\end{enumerate}
\end{proposition}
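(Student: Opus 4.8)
The plan is to read identity (iv) as axiom~\ref{axiom:JP1} restricted to units, under the dictionary in which a $\mu$-map $\mu_t$ plays the role of the quadratic operator $Q_t$ and the bilinear maps $\mu_{\cdot,\cdot}$, $\mutil_{\cdot,\cdot}$ play the role of its bilinearization $Q_{\cdot,\cdot}$; identities (i)--(iii) are auxiliary (``polarized'') relations that feed into it, so I would prove them in the order (i), (ii), (iii), (iv). The recurring technique is polarization: an identity which is quadratic in a variable $x$ ranging over units is linearized by substituting a sum for $x$ and subtracting off the two pure instances, exactly as in the proof of Lemma~\ref{lem:JMS_identities}\ref{itm:xxy}. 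Throughout I would use that $\mu_{\cdot,\cdot}$ and $\mutil_{\cdot,\cdot}$ are symmetric and $\Z$-bilinear, that every $\mu$-map is an involution carrying $V^+$ onto $V^-$ (and $+$ to $\plustil$), together with the identities $t\mu_{t,x}=-x\cdot 2$, $y\mu_{t,t}=y\mu_t\cdot 2$, $\mu_s\mu_{s,t}\mu_t=\mu_t\mu_{s,t}\mu_s=\mutil_{s,t}$, $s\mu_t\mu_{s,t}=-t\mu_s\cdot 2$ of Lemma~\ref{lem:JMS_identities}, the commutation $\mu_x^{\mu_y}=\mu_{x\mu_y}$ and the involution $\mu_x^2=\id$ of Proposition~\ref{prop:mu-involution}, and $x\mu_x=-x$ from Lemma~\ref{lem:specialmu}\ref{itm:special-iii}.

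For (i), I would polarize the commutation relation $\mu_s\mu_{s,t}\mu_t=\mu_t\mu_{s,t}\mu_s$ of Lemma~\ref{lem:JMS_identities}\ref{itm:reverse_order} (equivalently $\mu_x\mu_{x+y}\mu_y=\mu_y\mu_{x+y}\mu_x$), expanding the bilinear occurrences of $\mu_{\cdot,\cdot}$ and the occurrences of $\mu$-maps that act in the additive direction $V^-\to V^+$, and then subtracting the pure instances; after renaming, the surviving cross terms are exactly the asserted equality. For (ii), the same device applied to $s\mu_t\mu_{s,t}=-t\mu_s\cdot 2$ of Lemma~\ref{lem:JMS_identities}\ref{itm:xxmuyy}, with $y\mu_{t,t}=y\mu_t\cdot 2$ and $t\mu_{t,x}=-x\cdot 2$ used to dispose of the diagonal contributions, gives the claimed relation; the sign and the factor $2$ are visible already in the specialisation $r=t$.

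For (iii) I would not polarize further but rather evaluate identity (i) on the distinguished units $r$ and $s$ themselves, using $r\mu_r=-r$, $s\mu_s=-s$ and $t\mu_{t,x}=-x\cdot 2$ to collapse several of the eight compositions; combining the relation so obtained with (ii) after some rearrangement — and using that $V^\pm$ has no $2$-torsion, which holds by \ref{itm:J3} and Proposition~\ref{prop:ndivglobal} — one arrives at $r\mu_s\mu_{t,s}=t\mutil_{r,s}\mu_r$. Finally, to obtain (iv), I would start from (iii), rewrite $\mutil_{z\mu_y,y}$ by means of $\mu_a\mu_{a,b}\mu_b=\mutil_{a,b}$ together with $\mu_x^{\mu_y}=\mu_{x\mu_y}$ so as to collect the outer $\mu$-maps on one side, and pass between $\mu_{\cdot,\cdot}$ and $\mutil_{\cdot,\cdot}$ with the identity relating them via $\tau$ (the last one in Lemma~\ref{lem:JMS_identities}); the outer equality $x\mutil_{z\mu_y,y}=z\mutil_{x\mu_y,y}$ then follows from the symmetry of $\mu_{\cdot,\cdot}$ and \ref{itm:reverse_order}. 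Once (iv) holds for units, it extends to all $x,z\in V^+$ and $y\in V^-$ by bilinearity, as \ref{itm:J4} and Remark~\ref{rem:expression_muxz} allow, giving \ref{axiom:JP1} and its linearizations; together with \ref{axiom:JP2} this then yields a Jordan pair by Proposition~\ref{prop:JPsufficientaxioms}\ref{itm:sufficient_JP}.

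The main obstacle is the bookkeeping rather than any isolated idea, and two points need special care. First, a polarization substitution $x\mapsto r+t$ is legitimate only when $r$, $t$ and $r+t$ are all units, which is not automatic: one routes around this exactly as in the proof of Lemma~\ref{lem:JMS_identities}\ref{itm:xxy}, replacing $t$ by $-t$ (allowed since $t\cdot 2$ is a unit by \ref{itm:J3}) and, in the remaining cases, by an auxiliary unit in the style of Remark~\ref{rem:expression_muxz}. Second, the restriction of a $\mu$-map to $V^+\to V^-$ is the inverse of its restriction $V^-\to V^+$ and is \emph{not} additive, while the module structures $+$ on $V^+$ and $\plustil$ on $V^-$ do not agree on units; so one must keep scrupulous track of which module structure governs each occurrence of a unit and in which direction each $\mu$-map is applied. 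This is precisely why identities such as \ref{itm:reverse_order}, which reverse the order of $\mu$-maps around a bilinear factor, and the $\mu$/$\mutil$ transport identity are used so heavily, and why the derivations of (iii) and (iv) follow the template of the proof of Theorem~5.11 in \cite{MR2425693} rather than being plain polarizations.
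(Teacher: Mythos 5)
Your route is essentially the paper's: polarize the identities of Lemma~\ref{lem:JMS_identities} to get (i) and (ii), evaluate (i) at the element $r$ and combine with (ii) to reach (iii), then convert (iii) into (iv) via Lemma~\ref{lem:JMS_identities}\ref{itm:reverse_order}, unique $2$\dash divisibility and the symmetry of $y\mu_{x,z}\mu_y$. Parts (ii)--(iv) of your sketch match the paper's proof in substance. There is, however, a genuine gap in your derivation of (i). The identity to be polarized, in the usable form $\mu_{r,s}\mu_s\mu_r=\mu_r\mu_s\mu_{r,s}$ (obtained from Lemma~\ref{lem:JMS_identities}\ref{itm:reverse_order} by conjugating with $\mu_r$ so that the fixed map $\mu_s$ sits in the middle --- a rearrangement you should make explicit, since polarizing $\mu_r\mu_{r,s}\mu_s=\mu_s\mu_{r,s}\mu_r$ as written produces terms like $\mu_{r,t}\mu_{r,s}\mu_s$ that do not occur in (i)), is \emph{cubic} in $r$, not quadratic: $\mu_{r,s}$ contributes degree one and $\mu_r$ degree two. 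Hence a single substitution $r\mapsto r+t$ followed by subtraction of the two pure instances does not leave ``exactly the asserted equality''. Writing $A=A'$ for identity (i) and $B=B'$ for its image under $r\leftrightarrow t$ (note that $B=\mu_{t,s}\mu_s\mu_{r,t}+\mu_{r,s}\mu_s\mu_t$ is precisely $A$ with $r$ and $t$ interchanged), what survives after subtracting the pure instances is only $A+B=A'+B'$, a statement symmetric in $r$ and $t$ from which $A=A'$ cannot be extracted.

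The repair is the device the paper uses: substitute $r+t\cdot\ell$ for $\ell\in\{1,2,3\}$, observe that the coefficients of $\ell^0$ and $\ell^3$ cancel by Lemma~\ref{lem:JMS_identities}\ref{itm:reverse_order}, note that at least two of these values of $\ell$ keep $r+t\cdot\ell$ a unit (since $t\cdot\ell$ is a unit by \ref{itm:J3} and adding a unit to a non-unit gives a unit), and solve the resulting Vandermonde system to separate the coefficients of $\ell$ and $\ell^2$. Your polarization template, calibrated on the quadratic identity of Lemma~\ref{lem:JMS_identities}\ref{itm:xxy}, covers (ii) (which is genuinely quadratic in the polarized variable) but not (i); you should state the multi-value version once and check in each application that enough values of $\ell$ yield units. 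With that adjustment the rest of your outline goes through as in the paper.
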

\begin{proof}\leavevmode
	\begin{enumerate}
		\item We start with Lemma~\ref{lem:JMS_identities}\ref{itm:reverse_order}, conjugating both sides by $\mu_r$ and then replacing $r$ by $r+t\cdot \ell$, for those $\ell\in\{1,2,3\}$ for which $r+t\cdot \ell$ is a unit. Since $\mu_{r+t\cdot \ell} = \mu_{r,t}\cdot \ell+\mu_r+\mu_t\cdot \ell^2$, we get
		\[(\mu_{r,s}+\mu_{t,s}\cdot \ell)\mu_s(\mu_{r,t}\cdot \ell+\mu_r+\mu_t\cdot \ell^2) = (\mu_{r,t}\cdot \ell+\mu_r+\mu_t\cdot \ell^2)\mu_s(\mu_{r,s}+\mu_{t,s}\cdot \ell)\]
		and after expanding,
		\begin{align*}
			\mu_{r,s}\mu_s\mu_r + (\mu_{r,s}\mu_s\mu_{r,t}+\mu_{t,s}\mu_s\mu_r)\cdot \ell + (\mu_{t,s}\mu_s\mu_{r,t}+\mu_{r,s}\mu_s\mu_t)\cdot \ell^2 + \mu_{t,s}\mu_s\mu_t\cdot \ell^3 \\
			= \mu_r\mu_s\mu_{r,s} + (\mu_{r,t}\mu_s\mu_{r,s}+\mu_r\mu_s\mu_{t,s})\cdot \ell + (\mu_{r,t}\mu_s\mu_{t,s}+\mu_t\mu_s\mu_{r,s})\cdot \ell^2 + \mu_t\mu_s\mu_{t,s}\cdot \ell^3
		\end{align*}
		Observe that the constant terms and terms with $\ell^3$ cancel due to Lemma~\ref{lem:JMS_identities}\ref{itm:reverse_order}, so we have
		\begin{align*}
			&(\mu_{r,s}\mu_s\mu_{r,t}+\mu_{t,s}\mu_s\mu_r)\cdot \ell + (\mu_{t,s}\mu_s\mu_{r,t}+\mu_{r,s}\mu_s\mu_t)\cdot \ell^2 \\
			= {}& (\mu_{r,t}\mu_s\mu_{r,s}+\mu_r\mu_s\mu_{t,s})\cdot \ell + (\mu_{r,t}\mu_s\mu_{t,s}+\mu_t\mu_s\mu_{r,s})\cdot \ell^2\;.
		\end{align*}
		Observe now that there are at least two values of $\ell$ for which $r+t\cdot \ell$ is a unit, since $t$ and $t\cdot2$ are units and adding a unit to a non-unit gives a unit. Using those two values, we can deduce that the coefficients on the left and right hand side of both $\ell$ and $\ell^2$ are equal. This means that
		\[\mu_{r,s}\mu_s\mu_{r,t}+\mu_{t,s}\mu_s\mu_r = \mu_{r,t}\mu_s\mu_{r,s}+\mu_r\mu_s\mu_{t,s}\;.\]
		\item We similarly linearize Lemma~\ref{lem:JMS_identities}\ref{itm:xxmuyy}, replacing $r$ by $r+t$ if $r+t$ is a unit (if not, replace $r$ by $r-t$). We get
		\begin{align*}
			(r+t)\mu_s\mu_{r+t,s} &= -s\mu_{r+t}\cdot 2 \;, \\
		\intertext{so using \ref{itm:J4} and the definition of $\mu_{r,t}$,}
			r\mu_s\mu_{r,s}+r\mu_s\mu_{t,s}+t\mu_s\mu_{r,s}+t\mu_s\mu_{t,s} &= -s\mu_{r,t}\cdot 2 - s\mu_r\cdot 2 - s\mu_t\cdot 2\;. \\
		\intertext{We can now use Lemma~\ref{lem:JMS_identities}\ref{itm:xxmuyy} twice to get}
			r\mu_s\mu_{t,s}+t\mu_s\mu_{r,s} &= -s\mu_{r,t}\cdot 2\;.
		\end{align*}
		\item We apply identity \ref{itm:linearize_commuting} to the element $r$, and use Lemma~\ref{lem:JMS_identities} to get
		\begin{align*}
			(-s\cdot 2)\mu_s\mu_{r,t} + r\mu_{t,s}\mu_s\mu_r &= (-t\cdot 2)\mu_s\mu_{r,s} + (-r)\mu_s\mu_{t,s} \;. \\
		\intertext{Using linearity, this yields}
			s\mu_{r,t}\cdot 2 + r\mu_{t,s}\mu_s\mu_r &= -t\mu_s\mu_{r,s}\cdot 2 -r\mu_s\mu_{t,s} \;, \\
		\intertext{and by \ref{itm:linearize_xxmuyy},}
			r\mu_{t,s}\mu_s\mu_r &= -t\mu_s\mu_{r,s} \,. \\
		\intertext{We now replace $r$ by $r\mu_s$ and apply $\mu_s\mu_r$:}
			r\mu_s\mu_{t,s}\mu_r\mu_s\mu_s\mu_r &= -t\mu_s\mu_{r\mu_s,s}\mu_s\mu_r \\
			\implies\quad r\mu_s\mu_{t,s} &= -t\mutil_{r,s\mu_s}\mu_r \\
			\implies\quad r\mu_s\mu_{t,s} &= t\mutil_{r,s}\mu_r\;.
		\end{align*}
		\item By Lemma~\ref{lem:JMS_identities}\ref{itm:reverse_order}, we have $\mutil_{r,s}\mu_r = \mu_s\mu_{r,s}$, so \ref{itm:JMS_JP1_units_iii} becomes
		\[ r\mu_s\mu_{t,s} = t\mu_s\mu_{r,s}\;.\]
		We can now plug this in \ref{itm:linearize_xxmuyy} and use the unique $2$-divisibility to get
		\[r\mu_s\mu_{t,s} = -s\mu_{r,t}\;.\]
		We now apply $\mu_s$ to both sides and use linearity to get
		\[s\mu_{r,t}\mu_s = \mathord{\mintil}r\mu_s\mu_{t,s}\mu_s = \mathord{\mintil}r\mutil_{t\mu_s,\mathord{\mintil}s} = r\mutil_{t\mu_s,s}\;.\]
		After renaming variables, we get the first identity we wanted to prove. For the second identity, remark that $y\mu_{x,z}\mu_y$ is symmetric in $x$ and $z$, hence
		\[ x\mutil_{z\mu_y,y} = y\mu_{x,z}\mu_y = z\mutil_{x\mu_y,y}\;.\qedhere\]
	\end{enumerate}
\end{proof}

\begin{remark}
	The technique used in the previous lemma will be used extensively to linearize many identities which hold when all unknowns are units. We describe it here in generality: replace an unknown $x$ by $x+\hat{x}\cdot \ell$ for some unused variable name $\hat{x}$, and $\ell\in\{1,2,3,4\}$. Next, we can combine several facts to expand the resulting identity as a polynomial in powers of $\ell$:
	\begin{itemize}
		\item the linearity of $\mu_{\cdot,\cdot}$ and $\mutil_{\cdot,\cdot}$;
		\item the definition of $\mu_{\cdot,\cdot}$ to expand $\mu_{x+\hat{x}\cdot\ell} = \mu_{x,\hat{x}}\cdot\ell + \mu_x + \mu_{\hat{x}\cdot\ell}$, which requires $x+\hat{x}\cdot\ell$ to be a unit;
		\item the identity $\mu_{\hat{x}\cdot\ell} = \mu_{\hat{x}}\cdot\ell^2$, which requires $\hat{x}$ to be a unit.
	\end{itemize}
	We assume the highest power of $\ell$ occurring is $\ell^4$. By the identity we started with, the coefficients of $\ell^0$ and $\ell^4$ will always be equal. If we now find $3$ values for which $x+\hat{x}\cdot \ell$ is a unit, we can solve the Vandermonde system of equations and then we know that the coefficients of $\ell^1$, $\ell^2$ and $\ell^3$ are also equal.

	This technique will be used in many proofs to come, but it does not necessarily work for any identity in $\mu_\cdot$, $\mu_{\cdot,\cdot}$ and $\mutil_{\cdot,\cdot}$. It can be checked that it does work whenever it is used.
\end{remark}

Next, we will prove \ref{axiom:JP2} for units. This axiom for Jordan pairs corresponds to the Triple Shift Formula for Jordan algebras (\cite[p.~202]{TasteOfJordanAlgebras}), which can be deduced from the axioms of Jordan algebras. We will use ideas from \cite{MR0325715} where such a deduction is made, and adapt them to the context of local Moufang sets. This will require many intermediate identities and will also require the choice of a fixed invertible element of the Jordan-pair-to-be. Hence we fix a unit $e$ of our local Moufang set, which we will use throughout the following few lemmas. We begin with two basic consequences of Proposition~\ref{prop:JMS_JP1_units}.
\begin{lemma}\label{lem:JMS_basic_identities}
	In a local Moufang set where \hyperref[itm:J1]{\normalfont{(J1-4)}} holds, we have the following identities:
	\begin{enumerate}
		\item $y\mu_{x,e} = e\mutil_{y,x\mu_e}\mu_e = -e\mu_{y\mu_e,x}$ \quad for all units $x\in V^+$ and $y\in V^-$; \label{itm:muxe}
		\item $x\mu_e\mu_{x,e} = -e\mu_x\cdot 2$ \quad for all units $x\in V^+$. \label{itm:xmuemuxe}
	\end{enumerate}
\end{lemma}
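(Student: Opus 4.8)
The plan is to squeeze both identities out of Proposition~\ref{prop:JMS_JP1_units}, together with Lemma~\ref{lem:JMS_identities} and the $+\leftrightarrow-$ symmetry recorded in the Remark after that lemma, by specialising one variable to the fixed unit $e$. Throughout I use freely that each $\mu$-map is an involution and a group isomorphism $(V^+,+)\to(V^-,\plustil)$ and back, that a unit of $\M$ lies simultaneously in $V^+$ and in $V^-$, and that $U_\infty$ is uniquely $2$-divisible (Proposition~\ref{prop:ndivglobal}, which applies by \ref{itm:J3}). Recall also that the $+\leftrightarrow-$ dual of a proven identity is obtained by interchanging $V^+\leftrightarrow V^-$, $+\leftrightarrow\plustil$, $-\leftrightarrow\mintil$, $\cdot k\leftrightarrow\cdottil k$ and $\mu_{\cdot,\cdot}\leftrightarrow\mutil_{\cdot,\cdot}$ while leaving every $\mu$-map unchanged, so that in particular the subscripts of a dualised identity are taken from $V^-$ rather than $V^+$.

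Statement~\ref{itm:xmuemuxe} is the special case $s=x$, $t=e$ of Lemma~\ref{lem:JMS_identities}\ref{itm:xxmuyy}. (It can equally be read off Proposition~\ref{prop:JMS_JP1_units}\ref{itm:JMS_JP1_units_iii}: with $r=t=x$ and $s=e$ that gives $x\mu_e\mu_{x,e}=x\mutil_{x,e}\mu_x$, while $x\mutil_{x,e}=\mathord{\mintil}e\cdottil 2$ by the $+\leftrightarrow-$ dual of Lemma~\ref{lem:JMS_identities}\ref{itm:xxy} at $t=x$, $y=e$; since $\mu_x$ carries $\plustil$ to $+$ and negation to negation, $(\mathord{\mintil}e\cdottil 2)\mu_x=-e\mu_x\cdot 2$.)

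For \ref{itm:muxe} I would first take $y$ to be a unit. The $+\leftrightarrow-$ dual of Proposition~\ref{prop:JMS_JP1_units}\ref{itm:JP1_units} reads $a\mu_{b\mu_c,c}=c\mutil_{a,b}\mu_c=b\mu_{a\mu_c,c}$ for units $a,b\in V^-$ and a unit $c\in V^+$; substituting $a=y$, $b=x\mu_e$, $c=e$ and using $(x\mu_e)\mu_e=x$ gives
\[ y\mu_{x,e}=e\mutil_{y,x\mu_e}\mu_e=x\mu_e\mu_{y\mu_e,e}\;, \]
which already supplies the first asserted equality. For the second, apply Proposition~\ref{prop:JMS_JP1_units}\ref{itm:linearize_xxmuyy} with $r=y\mu_e$, $s=e$, $t=x$: using $(y\mu_e)\mu_e=y$ this becomes $y\mu_{x,e}+x\mu_e\mu_{y\mu_e,e}=-e\mu_{y\mu_e,x}\cdot 2$, and substituting $x\mu_e\mu_{y\mu_e,e}=y\mu_{x,e}$ from the previous display yields $y\mu_{x,e}\cdot 2=-e\mu_{y\mu_e,x}\cdot 2$, hence $y\mu_{x,e}=-e\mu_{y\mu_e,x}$ by unique $2$-divisibility. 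Finally each of the three expressions in \ref{itm:muxe} is $\Z$-linear in $y\in V^-$ (by the bilinearity of $\mu_{\cdot,\cdot}$ and $\mutil_{\cdot,\cdot}$ from \ref{itm:J4} and the linearity of the $\mu$-maps), and $V^-$ is additively generated by its units (a non-unit plus a unit is again a unit), so the identity extends to every $y\in V^-$.

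The only delicate part is bookkeeping: one has to apply the $+\leftrightarrow-$ dualisation accurately and check that each substituted argument ($e$, $x\mu_e$, $y\mu_e$ and the unit $y$) is a unit, hence lies in both $V^+$ and $V^-$, so that the quoted instances of Proposition~\ref{prop:JMS_JP1_units} are legitimate. Once the substitutions are aligned there is no genuinely new computation.
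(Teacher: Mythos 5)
Your proof is correct, and every identity you invoke (Lemma~\ref{lem:JMS_identities}, Proposition~\ref{prop:JMS_JP1_units}, unique $2$\dash divisibility via Proposition~\ref{prop:ndivglobal}) is established before this lemma, so there is no circularity. The first equality of \ref{itm:muxe} is obtained exactly as in the paper: dualize Proposition~\ref{prop:JMS_JP1_units}\ref{itm:JP1_units} and specialize one variable to $e$. The remaining steps diverge mildly. For the second equality of \ref{itm:muxe} the paper conjugates by $\tau=\mu_e$, using the last item of Lemma~\ref{lem:JMS_identities} in the form $\mutil_{y,x\mu_e}\mu_e=\mu_e\mu_{y\mu_e,x}$ and then evaluating at $e$, so that $e\mutil_{y,x\mu_e}\mu_e=(\mintil e)\mu_{y\mu_e,x}=-e\mu_{y\mu_e,x}$; your route through the third member $x\mu_e\mu_{y\mu_e,e}$ of the dualized identity, Proposition~\ref{prop:JMS_JP1_units}\ref{itm:linearize_xxmuyy} with $(r,s,t)=(y\mu_e,e,x)$, and unique $2$\dash divisibility reaches the same conclusion at the cost of one extra division by $2$. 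For \ref{itm:xmuemuxe}, your observation that it is literally Lemma~\ref{lem:JMS_identities}\ref{itm:xxmuyy} with $s=x$, $t=e$ is more direct than the paper's derivation, which instead sets $y=x\mu_e$ in \ref{itm:muxe} and applies Lemma~\ref{lem:JMS_identities}\ref{itm:xyx}. Finally, your explicit extension from unit $y$ to arbitrary $y\in V^-$, using the $\Z$\dash linearity of all three expressions in $y$ and the decomposition $y=(y\mintil e)\plustil e$ into units, addresses a point the paper's proof leaves implicit, so it is a welcome precision rather than a detour.
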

\begin{proof}\leavevmode
	\begin{enumerate}
		\item We take Proposition~\ref{prop:JMS_JP1_units}\ref{itm:JP1_units}, interchange the roles of $V^+$ and $V^-$ and set $s=e$ to get $r\mu_{t\mu_e,e} = e\mutil_{r,t}\mu_e$. Now replace $r$ by $y$ and $t$ by $x\mu_e$ to get the first equality. For the second, we use
		\[e\mutil_{y,x\mu_e}\mu_e = e\mu_e\mu_{y\mu_e,x\mu_e\mu_e} = (\mathord{\mintil}e)\mu_{y\mu_e,x} = -e\mu_{y\mu_e,x}\;.\]
		\item Setting $y = x\mu_e$ in \ref{itm:muxe}, we get $x\mu_e\mu_{x,e} = -e\mu_{x,x} = -e\mu_x\cdot 2$.\qedhere
	\end{enumerate}
\end{proof}

We start building up some identities that we will use to prove \ref{axiom:JP2} for units.

\begin{lemma}\label{lem:JMS_QJ-identities}
	Let $\M$ be a local Moufang set satisfying \hyperref[itm:J1]{\normalfont{(J1-4)}}.
    Then for all units $x,z,v \in V^+$ and all units $y,w \in V^-$, the following identities hold:
	\begin{enumerate}
		\item $\mu_x\mu_y\mu_z+\mu_z\mu_y\mu_x+\mu_{x,z}\mu_y\mu_{x,z} = \mu_{y\mu_{x,z}} + \mu_{y\mu_x,y\mu_z}$; \label{QJ7}
		\item $\mu_{x,e}\mu_e\mu_{x,e}+\mu_{e\mu_x,e} = \mu_x\cdot 2$; \label{QJ20}
		\item $y\mu_{x,w\mu_{x,z}}+y\mu_{z,w\mu_x} = x\mutil_{y,w}\mu_{x,z} + z\mutil_{y,w}\mu_x$; \label{QJ9}
		\item $e\mu_{z,e\mu_x} = x\mu_e\mu_{z,x}$; \label{QJ26}
		\item $e\mu_{v,e\mu_{x,z}} = x\mu_e\mu_{v,z}+z\mu_e\mu_{v,x}$. \label{QJ27}
	\end{enumerate}
\end{lemma}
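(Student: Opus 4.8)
The whole lemma is a sequence of linearization arguments: each of (i)--(v) is obtained from an identity already known to hold when \emph{all} arguments are units, via the polynomial-in-$\ell$ technique spelled out in the remark following Proposition~\ref{prop:JMS_JP1_units}. Since in the present lemma $x,z,v,y,w$ are themselves assumed to be units, no separate ``extend to non-units'' step is needed, but one does use the bilinearity of $\mu_{\cdot,\cdot}$ and $\mutil_{\cdot,\cdot}$ from \ref{itm:J4} throughout. Constantly invoked will be: $\mu_v^2=\id$ for units $v$, $\mu_{-v}=\mu_v$ and $\mu_{v\cdot\ell}=\mu_v\cdot\ell^2$ (Propositions~\ref{prop:mu-involution} and \ref{prop:mu-x.s}), $\mu_{v,v}=2\mu_v$ and its $\pm$-mirror $\mutil_{w,w}=2\mu_w$ (Lemma~\ref{lem:JMS_identities}\ref{itm:xyx}), $e\mu_e=-e$ (Lemma~\ref{lem:specialmu}\ref{itm:special-iii}), $e\mu_{x,e}=e\mu_{e,x}=-x\cdot2$ (Lemma~\ref{lem:JMS_identities}\ref{itm:xxy} plus $\mu_{a,b}=\mu_{b,a}$), the rewriting $e\mu_{a,b}=-a\mu_e\mu_{b,e}$ (a reformulation of Lemma~\ref{lem:JMS_basic_identities}\ref{itm:muxe}), and Lemma~\ref{lem:JMS_identities}\ref{itm:reverse_order} and Proposition~\ref{prop:JMS_JP1_units}. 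The labels \ref{QJ7}, \ref{QJ20}, \ref{QJ9}, \ref{QJ26}, \ref{QJ27} are chosen to match the corresponding quadratic-Jordan identities of \cite{MR0325715}, and the chain of deductions parallels the one there.

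For \ref{QJ7} the parent is $\mu_x\mu_y\mu_x=\mu_{y\mu_x}$, the Moufang-set form of $Q_xQ_yQ_x=Q_{yQ_x}$; it follows from $\mu_y\mu_x\mu_y=\mu_x^{\mu_y}=\mu_{x\mu_y}$ (Proposition~\ref{prop:mu-involution}) after interchanging $x$ and $y$. Replacing $x$ by $x+z\cdot\ell$ for $\ell\in\{1,2,3,4\}$ — of which at least three give units, since $z,z\cdot2,z\cdot3$ are units — expanding $\mu_{x+z\cdot\ell}$ on the left and $\mu_{y\mu_{x+z\cdot\ell}}$ on the right via $\mu_{p+q}=\mu_p+\mu_q+\mu_{p,q}$ and $\mu_{v\cdot\ell}=\mu_v\cdot\ell^2$, and equating the coefficients of $\ell^2$, gives \ref{QJ7}. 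When the argument $y\mu_{x,z}$ occurring in $\mu_{y\mu_{x,z}}$ is not a unit one reads $\mu_v$ as the natural extension $\mu_{v,v}\cdot\tfrac12$, which agrees with the $\mu$-maps on units and still obeys both expansion rules. Then \ref{QJ20} is exactly the case $y=z=e$ of \ref{QJ7}: $\mu_e^2=\id$ turns the first two terms on the left into $2\mu_x$, $e\mu_{x,e}=-x\cdot2$ turns $\mu_{e\mu_{x,e}}$ into $\mu_x\cdot4$, and $e\mu_e=-e$ turns $\mu_{e\mu_x,e\mu_e}$ into $-\mu_{e\mu_x,e}$; rearranging yields \ref{QJ20}.

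For \ref{QJ9} I would first record the ``diagonal'' $y\mu_{x,w\mu_x}=x\mutil_{y,w}\mu_x$ (for a unit $x\in V^+$ and units $y,w\in V^-$), which follows from the $\pm$-mirror of Proposition~\ref{prop:JMS_JP1_units}\ref{itm:JP1_units} after substituting $w\mu_x$ for $w$ (legitimate as $w$ is a unit) and applying $\mu_x$; linearizing this in $x\mapsto x+z\cdot\ell$ and reading off the coefficient of $\ell$ gives \ref{QJ9}. Finally I would treat \ref{QJ26} and \ref{QJ27} together. Note that \ref{QJ27} is the coefficient of $\ell$ in the linearization of \ref{QJ26} in its variable $x$ (replace $x$ by $x+v\cdot\ell$, expand $e\mu_{x+v\cdot\ell}=e\mu_x+e\mu_{x,v}\cdot\ell+e\mu_v\cdot\ell^2$ and the right-hand side bilinearly), so \ref{QJ27} follows once \ref{QJ26} is known. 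For \ref{QJ26} one starts from \ref{QJ9} with $y=w=e$, which — using $\mutil_{e,e}=2\mu_e$ — reads $e\mu_{x,e\mu_{x,z}}+e\mu_{z,e\mu_x}=2\,x\mu_e\mu_{x,z}+2\,z\mu_e\mu_x$, and then produces a \emph{second}, independent relation between $e\mu_{z,e\mu_x}$ and $e\mu_{x,e\mu_{x,z}}$ out of the rewriting $e\mu_{a,b}=-a\mu_e\mu_{b,e}$, Lemma~\ref{lem:JMS_identities}\ref{itm:reverse_order}, and the symmetry $\mu_{a,b}=\mu_{b,a}$; solving the resulting two-equation system (using unique $2$-divisibility, Proposition~\ref{prop:ndivglobal}) isolates \ref{QJ26}.

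The main obstacle is exactly this last step. Identities \ref{QJ7}, \ref{QJ20}, \ref{QJ9} come out of clean single ``parent'' identities by routine, if lengthy, linearization: the only care needed is to verify that enough values of $\ell$ give units for the Vandermonde argument, to keep straight which subscripts and arguments remain units after each substitution, to invoke the $\pm$-mirrors of earlier lemmas at the right moments, and to deal with the extension of $\mu_\bullet$ past the units in \ref{QJ7}. But \ref{QJ26} (and hence \ref{QJ27}) is genuinely more delicate: it does not descend from one parent, and one must combine \ref{QJ9} with Lemma~\ref{lem:JMS_basic_identities} and exploit the symmetry of $\mu_{\cdot,\cdot}$ in a way that has to be found — this is the Moufang-set transcription of the passage in \cite{MR0325715} that derives the Triple Shift Formula from the quadratic Jordan axioms, and getting this disentanglement right is the crux of the lemma.
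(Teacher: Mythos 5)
Your plan coincides with the paper's for most of the lemma: (ii) and (v) are derived from (i) and (iv) by exactly the same specialization resp.\ linearization, and (iii) is linearized from the same parent $y\mu_{w\mu_x,x}=x\mutil_{y,w}\mu_x$. You diverge in two places. For (i), you linearize $\mu_x\mu_y\mu_x=\mu_{y\mu_x}$ only in $x$ and handle the possibly non-unit argument $y\mu_{x,z}$ by reading $\mu_v$ as $\mu_{v,v}\cdot\tfrac{1}{2}$; this is legitimate (the extension does satisfy $\mu_{p+q}=\mu_p+\mu_q+\mu_{p,q}$ and $\mu_{v\cdot\ell}=\mu_v\cdot\ell^2$ for arbitrary arguments and agrees with the genuine $\mu$-maps on units), but the paper avoids the extension altogether by linearizing first in $y$ --- which turns the right-hand side into the everywhere-defined bilinear expression $\mu_{y\mu_x,y'\mu_x}$ --- then in $x$, setting $y'=y$ and dividing by $2$. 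For (iv), the paper specializes (iii) at $x=e$, $w=z\mu_e$ and unwinds via (ii) and Lemma~\ref{lem:JMS_basic_identities}; you instead specialize (iii) at $y=w=e$, obtaining $e\mu_{x,e\mu_{x,z}}+e\mu_{z,e\mu_x}=x\mu_e\mu_{x,z}\cdot2+z\mu_e\mu_x\cdot2$, and solve a two-equation system. This does close up: applying the rewriting $e\mu_{a,b}=-a\mu_e\mu_{b,e}$ three times together with (ii) yields the second relation $e\mu_{z,e\mu_x}=e\mu_{x,e\mu_{x,z}}-z\mu_e\mu_x\cdot2$, and adding and subtracting plus unique $2$-divisibility give (iv) --- although that second relation comes from (ii) and the rewriting, not from Lemma~\ref{lem:JMS_identities}\ref{itm:reverse_order} as you cite. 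The two routes to (iv) are of comparable length; yours keeps the substitution symmetric in the $V^+$-variables, the paper's avoids having to manufacture a second independent relation.
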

\begin{proof}\leavevmode
	\begin{enumerate}
		\item We start from the identity $\mu_x\mu_y\mu_x = \mu_{y\mu_x}$, and linearize $y$ to $y\plustil y'\cdottil\ell$.
        Equating the coefficients of $\ell$ on both sides yields
		\[\mu_x\mu_{y,y'}\mu_x = \mu_{y\mu_x,y'\mu_x}\;.\]
		Next, we linearize $x$ to $x+z\cdot\ell$, and equate the coefficients of $\ell^2$ on both sides of the equality; this gives
		\[\mu_x\mu_{y,y'}\mu_z+\mu_z\mu_{y,y'}\mu_x+\mu_{x,z}\mu_{y,y'}\mu_{x,z} = \mu_{y\mu_x,y'\mu_z}+\mu_{y\mu_z,y'\mu_x}+\mu_{y\mu_{x,z},y'\mu_{x,z}}\;.\]
		We can now set $y'=y$ and use Lemma~\ref{lem:JMS_identities}\ref{itm:xyx} to get
		\[\mu_x\mu_y\mu_z\cdot 2+\mu_z\mu_y\mu_x\cdot 2+\mu_{x,z}\mu_y\mu_{x,z}\cdot 2 = \mu_{y\mu_x,y\mu_z}\cdot 2+\mu_{y\mu_{x,z}}\cdot 2\;.\]
		The unique $2$-divisibility now gives us the desired identity.
		\item We set $y=z=e$ in \ref{QJ7} and get
		\[\mu_x\mu_e\mu_e + \mu_e\mu_e\mu_x + \mu_{x,e}\mu_e\mu_{x,e} = \mu_{e\mu_{x,e}} + \mu_{e\mu_x,e\mu_e}\;,\]
		which reduces to
		\[\mu_x\cdot 2 + \mu_{x,e}\mu_e\mu_{x,e} = \mu_{-x\cdot 2} + \mu_{e\mu_x,-e}\]
		and by $\mu_{-x\cdot 2} = \mu_x\cdot 4$ and linearity, we get
		\[\mu_x\cdot 2 + \mu_{x,e}\mu_e\mu_{x,e} = \mu_{x}\cdot 4 - \mu_{e\mu_x,e}\;,\]
		so after rearranging we get the identity we wanted to prove.
		\item Starting from the first equality of Proposition~\ref{prop:JMS_JP1_units}\ref{itm:JP1_units}, we interchange the roles of $V^+$ and $V^-$ and rename some variables to get $y\mu_{w\mu_x,x} = x\mutil_{y,w}\mu_x$. Next, we linearize $x$ to $x+z\cdot\ell$; the coefficients of $\ell^1$ give the desired equality.
		\item Set $x=e$ and $w = z\mu_e$ in \ref{QJ9} to get
		\[y\mu_{e,z\mu_e\mu_{e,z}}+y\mu_{z,z} = e\mutil_{y,z\mu_e}\mu_{e,z} + z\mutil_{y,z\mu_e}\mu_e\;.\]
		By Lemma~\ref{lem:JMS_basic_identities}\ref{itm:xmuemuxe}, $z\mu_e\mu_{e,z} = -e\mu_z\cdot 2$, so the previous identity becomes
		\begin{equation}
			-y\mu_{e,e\mu_z}\cdot 2+y\mu_z\cdot 2 = e\mutil_{y,z\mu_e}\mu_{e,z} + z\mu_e\mu_{y\mu_e,z}\;. \label{eq:QJ26_stepi}
		\end{equation}
		Next, we take identity \ref{QJ20}, replace $x$ by $z$, and apply it to $y$. This gives
		\[y\mu_{z,e}\mu_e\mu_{z,e}+y\mu_{e\mu_z,e} = y\mu_z\cdot 2\;,\]
		which we can combine with \eqref{eq:QJ26_stepi} to
		\[-y\mu_{e,e\mu_z}+y\mu_{z,e}\mu_e\mu_{z,e} = e\mutil_{y,z\mu_e}\mu_{e,z} + z\mu_e\mu_{y\mu_e,z}\;.\]
		By Lemma~\ref{lem:JMS_basic_identities}\ref{itm:muxe}, we have
		\[y\mu_{z,e} = e\mutil_{y,z\mu_e}\mu_e\;,\]
		so
		\[y\mu_{z,e}\mu_e\mu_{z,e} = e\mutil_{y,z\mu_e}\mu_{e,z}\;.\]
		From this, we get
		\[-y\mu_{e,e\mu_z} = z\mu_e\mu_{y\mu_e,z}\;.\]
		Again by Lemma~\ref{lem:JMS_basic_identities}\ref{itm:muxe}, we have
		\[-y\mu_{e,e\mu_z} = -(-e\mu_{y\mu_e,e\mu_z}) = e\mu_{y\mu_e,e\mu_z}\;.\]
		Combining these last two identities, replacing $z$ by $x$ and $y$ by $z\mu_e$ gives the desired identity.
		\item We linearize $x$ to $x+v\cdot\ell$ in \ref{QJ26}, take the coefficients in $\ell^1$ and interchange $z$ and $v$ to get the desired identity.\qedhere
	\end{enumerate}
\end{proof}

We are now ready to prove \ref{axiom:JP2} for units. Our starting point is an identity which is symmetric in two unknowns on one side of the equality sign, and hence must also be symmetric in those unknowns on the other side.
For clarity in the notation, we will occasionally write $\mu(\cdot,\cdot)$ instead of $\mu_{\cdot,\cdot}$.

\begin{proposition}\label{prop:JMS_JP2_units}
	In a local Moufang set where \hyperref[itm:J1]{\normalfont{(J1-4)}} holds, we have the following identities:
	\begin{enumerate}[itemsep=.4ex]
		\item $e\mu(z\mu_e\mu_x,z) = e\mu(x\mu_e\mu_z,x)$ \quad for all units $x,z\in V^+$;\label{QJ29}
		\item $e\mu(z\mu_e\mu_{x,v},z) = e\mu(x\mu_e\mu_z,v)+e\mu(v\mu_e\mu_z,x)$ \quad for all units $x,z\in V^+$ and any $v\in V^+$;\label{QJ34}
		\item $e\mu(x\mu_e\mu_z,x\mu_e\mu_{v,e}) + e\mu(x\mu_e\mu_{v,e}\mu_e\mu_z,x) = e\mu_{z,v}\mu_e\mu_x\mu_e\mu_{z,e} + e\mu(z\mu_e\mu_x,v)\mu_e\mu_{z,e}$ \begin{flushright} for all units $x,z,v\in V^+$; \end{flushright} \label{QJ33}
		\item $v\mu_e\mu(z\mu_e\mu_x,z) - v\mu_e\mu(x\mu_e\mu_z,x) = e\mu(x\mu_e\mu_z,x\mu_e\mu_{v,e}) - e\mu(z\mu_e\mu_x,v)\mu_e\mu_{z,e}$ \begin{flushright} for all units $x,z,v\in V^+$; \end{flushright} \label{QJ32}
		\item $x\mu_y\mu_{x,z} = y\mu_{y\mu_x,z}$ \quad for all units $x,z\in V^+$ and all units $y\in V^-$.\label{itm:JP2_units}
	\end{enumerate}
\end{proposition}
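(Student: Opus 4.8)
The plan is to prove the five identities in the stated order, treating \ref{QJ29}--\ref{QJ32} as scaffolding built around the fixed unit $e$ and \ref{itm:JP2_units} as the target: under the dictionary $\mu_x\leftrightarrow Q_x$, $\mu_{\cdot,\cdot}\leftrightarrow Q_{\cdot,\cdot}$, the identity $x\mu_y\mu_{x,z}=y\mu_{y\mu_x,z}$ is exactly axiom \ref{axiom:JP2} restricted to units, which for Jordan algebras is the Triple Shift Formula; I would adapt its algebra-theoretic derivation from \cite{MR0325715}. Two devices recur throughout: (a) the linearization technique described in the Remark after Proposition~\ref{prop:JMS_JP1_units} (replace an unknown by $x+\hat x\cdot\ell$, expand in powers of $\ell$ using \ref{itm:J4}, the definition of $\mu_{\cdot,\cdot}$ and the identity $\mu_{\hat x\cdot\ell}=\mu_{\hat x}\cdot\ell^2$ from Proposition~\ref{prop:mu-x.s}, and equate coefficients — assumption \ref{itm:J3} guarantees three admissible values of $\ell$ for the Vandermonde step); and (b) the observation that an expression which is visibly symmetric in two unknowns on one side of an equality must also be symmetric on the other side.

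For \ref{QJ29} I would first use Lemma~\ref{lem:JMS_basic_identities}\ref{itm:muxe} to rephrase the claim: $e\mu(z\mu_e\mu_x,z)=e\mu(x\mu_e\mu_z,x)$ is equivalent to the assertion that $z\mu_{x\mu_e}\mu_{z,e}$ is symmetric in $x$ and $z$. That symmetry I would extract from an identity of Lemma~\ref{lem:JMS_QJ-identities} (combining \ref{QJ26}, \ref{QJ27}, and \ref{QJ7} specialized at $y=e$) by isolating a side whose symmetry in $x,z$ is manifest and reading off the other side. For \ref{QJ34} I would linearize \ref{QJ29} in the $x$-slot, replacing $x$ by $x+v\cdot\ell$; after expanding $\mu_{x+v\cdot\ell}$, $\mu_{x,v\cdot\ell}$ and $\mu_{v\cdot\ell}=\mu_v\cdot\ell^2$, collecting powers of $\ell$, and equating the coefficients of $\ell^1$, the identity \ref{QJ34} drops out.

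For \ref{QJ33} and \ref{QJ32} I would continue in the same spirit: linearize \ref{QJ34} (or Lemma~\ref{lem:JMS_QJ-identities}\ref{QJ26}) once more in a second slot, and combine the outcome with Lemma~\ref{lem:JMS_QJ-identities}\ref{QJ20} and \ref{QJ27} and with Lemma~\ref{lem:JMS_basic_identities}. The equalities $y\mu_{x,e}=-e\mu_{y\mu_e,x}$ and $x\mu_e\mu_{x,e}=-e\mu_x\cdot2$ of Lemma~\ref{lem:JMS_basic_identities} are the natural bridges between $\mu_{\cdot,\cdot}$ and $\mutil_{\cdot,\cdot}$ and between $e$-anchored and free expressions. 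Rearranging the resulting relations so as to isolate the combination $v\mu_e\mu(z\mu_e\mu_x,z)-v\mu_e\mu(x\mu_e\mu_z,x)$ yields \ref{QJ33} and then \ref{QJ32}.

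Finally, for \ref{itm:JP2_units}: specializing $v$ in \ref{QJ32} — the natural choice being $v=e$, simplified via Lemma~\ref{lem:JMS_identities}\ref{itm:xyx} and $e\mu_e=-e$ — makes the left-hand side collapse by the symmetry \ref{QJ29}, so \ref{QJ32} reduces to a single relation among the $e$-anchored maps, valid for all units $x,z$. Rewriting an arbitrary unit $y\in V^-$ as $y=w\mu_e$ with $w\in V^+$, using $\mu_{w\mu_e}=\mu_e\mu_w\mu_e$ from Proposition~\ref{prop:mu-involution} to turn $x\mu_y$ into $x\mu_e\mu_w\mu_e$, and translating each $e\mu_{\cdot,\cdot}$-term back through Lemma~\ref{lem:JMS_basic_identities}\ref{itm:muxe}, this relation becomes, after renaming, exactly $x\mu_y\mu_{x,z}=y\mu_{y\mu_x,z}$. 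I expect the main obstacle to be organizational rather than conceptual: keeping the two-sided bookkeeping straight (which expressions lie in $V^+$ versus $V^-$, and whether $\mu_{\cdot,\cdot}$ or $\mutil_{\cdot,\cdot}$ is the correct object), controlling the proliferation of terms through the repeated linearizations, and — each time device (a) is used — checking that the relevant argument stays a unit for enough values of $\ell\in\{1,2,3,4\}$ that the Vandermonde argument applies.
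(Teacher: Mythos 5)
Your overall architecture --- building (i)--(iv) as $e$\dash anchored scaffolding by linearization and symmetry-extraction and then deducing (v) --- matches the paper's, and your treatment of \ref{QJ34} (linearize \ref{QJ29} in the $x$-slot, read off the coefficient of $\ell^1$) is exactly what the paper does. But the final step, the only one that actually delivers the Triple Shift Formula, does not work as you describe it. Specializing $v=e$ in \ref{QJ32} kills \emph{both} sides, not just the left. Indeed, on the right-hand side Lemma~\ref{lem:JMS_basic_identities}\ref{itm:muxe} in the form $e\mu_{a,x}=-a\mu_e\mu_{x,e}$ together with $\mu_{e,e}=\mu_e\cdot2$ gives $x\mu_e\mu_{e,e}=x\cdot2$ and $e\mu(z\mu_e\mu_x,e)=-z\mu_e\mu_x\cdot2$, whence $e\mu(z\mu_e\mu_x,e)\mu_e\mu_{z,e}=2e\mu(z\mu_e\mu_x,z)$; so the right-hand side collapses to $2e\mu(x\mu_e\mu_z,x)-2e\mu(z\mu_e\mu_x,z)$, which vanishes by \ref{QJ29} just as the left-hand side does. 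The specialization therefore yields $0=0$, and there is no residual ``single relation'' to translate into $x\mu_y\mu_{x,z}=y\mu_{y\mu_x,z}$. A variable count already signals the obstruction: \ref{itm:JP2_units} is an identity in three independent units ($x$, $z$ and $y=w\mu_e$), so it cannot be recovered from a two-variable consequence of \ref{QJ32}. The paper instead keeps $v$ free and plays \ref{QJ32} off against Lemma~\ref{lem:JMS_QJ-identities}\ref{QJ27} used twice --- once with $z\mapsto v$, $v\mapsto x\mu_e\mu_z$, and once with $z\mapsto z\mu_e\mu_x$, $v\mapsto z$, $x\mapsto v$ --- and eliminates the common terms to arrive at $z\mu_e\mu_x\mu_e\mu_{z,v}=x\mu_e\mu(x\mu_e\mu_z,v)$, which becomes (v) after substituting $y=x\mu_e$.

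A secondary remark: your sketches for \ref{QJ29}, \ref{QJ33} and \ref{QJ32} are in the right spirit but too loose to certify. The paper's \ref{QJ29} does not come from combining \ref{QJ26}, \ref{QJ27} and \ref{QJ7} at $y=e$; it comes from linearizing Proposition~\ref{prop:JMS_JP1_units}\ref{itm:JP1_units} with the roles of $V^+$ and $V^-$ interchanged, substituting $y=z\mu_e$, and then using \ref{QJ26}, \ref{QJ20} and Lemma~\ref{lem:JMS_basic_identities}\ref{itm:muxe} to show that the asymmetric-looking remainder equals $e\mu_{x,z}\mu_e\mu_{x,z}-e\mu_{e\mu_z,e\mu_x}$, which is manifestly symmetric. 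Likewise \ref{QJ33} is obtained from Lemma~\ref{lem:JMS_QJ-identities}\ref{QJ9} with the substitution $z\mapsto e$, $v\mapsto v\mu_e$, $y\mapsto z\mu_e$, not by relinearizing \ref{QJ34}. These are repairable imprecisions; the gap at (v) is the one that must be fixed.
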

\begin{proof}\leavevmode
	\begin{enumerate}
		\item We start with Proposition~\ref{prop:JMS_JP1_units}\ref{itm:JP1_units}, where we interchange the roles of $V^+$ and $V^-$, set $x=e$ and rename the other variables variables:
		\[e\mu_{y\mu_z,z} = z\mutil_{e,y}\mu_z\;.\]
		Linearizing $z$ to $z+x\cdot\ell$ and taking the coefficients of $\ell^1$ gives us
		\[e\mu_{y\mu_{x,z},x}+e\mu_{y\mu_x,z} = x\mutil_{e,y}\mu_{x,z}+z\mutil_{e,y}\mu_x\;.\]
		Substituting $z\mu_e$ for $y$, we get
		\[e\mu(z\mu_e\mu_{x,z},x)+e\mu(z\mu_e\mu_x,z) = x\mutil_{e,z\mu_e}\mu_{x,z}+z\mutil_{e,z\mu_e}\mu_x\;.\]
		We want to show that $e\mu(z\mu_e\mu_x,z)$ is symmetric in $x$ and $z$, i.e.\ we need to show that the remaining terms are symmetric in $x$ and $z$. By Proposition~\ref{prop:JMS_JP1_units}\ref{itm:JP1_units},
		\[x\mutil_{e,z\mu_e}\mu_{x,z} = e\mu_{x,z}\mu_e\mu_{x,z}\;,\]
		so this term is symmetric. Hence it remains to show that $z\mutil_{e,z\mu_e}\mu_x-e\mu(z\mu_e\mu_{x,z},x)$ is symmetric.
		We have
		\begin{align*}
		z\mutil_{e,z\mu_e}\mu_x-e\mu(z\mu_e\mu_{x,z},x)
			&= e\mu_{z,z}\mu_e\mu_x - e\mu(e\mu_{x,e\mu_z},x) = e\mu_z\mu_e\mu_x\cdot 2 - e\mu(e\mu_{x,e\mu_z},x) \\
		\intertext{by Proposition~\ref{prop:JMS_JP1_units}\ref{itm:JP1_units} and Lemma~\ref{lem:JMS_QJ-identities}\ref{QJ26}. By Lemma~\ref{lem:JMS_QJ-identities}\ref{QJ20} applied to $e\mu_z\mu_e$, this is}
			&= e\mu_z\mu_e\mu_{x,e}\mu_e\mu_{x,e} + e\mu_z\mu_e\mu_{e\mu_x,e} - e\mu(e\mu_{x,e\mu_z},x) \\
			&= e\mu_z\mu_e\mu_{x,e}\mu_e\mu_{x,e} - e\mu_{e\mu_z,e\mu_x} - e\mu(e\mu_{x,e\mu_z},x)\;,
		\end{align*}
		by Lemma~\ref{lem:JMS_basic_identities}\ref{itm:muxe}. The term $e\mu_{e\mu_z,e\mu_x}$ is again symmetric in $x$ and $z$, so it is sufficient to prove that the remaining difference is symmetric. We will, in fact, show that this expression is always $0$ by using Lemma~\ref{lem:JMS_basic_identities}\ref{itm:muxe} twice:
		\begin{align*}
			e\mu_z\mu_e\mu_{x,e}\mu_e\mu_{x,e} &= -e\mu(e\mu_z\mu_e\mu_{x,e},x) \\
				&= -e\mu(-e\mu_{e\mu_z,x},x) = e\mu(e\mu_{x,e\mu_z},x)\;.
		\end{align*}
		Putting everything together, we get
		\[e\mu(z\mu_e\mu_x,z) = e\mu_{x,z}\mu_e\mu_{x,z} - e\mu_{e\mu_z,e\mu_x}\;,\]
		which is symmetric in $x$ and $z$, hence we must have $e\mu(z\mu_e\mu_x,z) = e\mu(x\mu_e\mu_z,x)$.
		\item Linearize $x$ to $x+v\cdot\ell$ in \ref{QJ29} and take the coefficients of $\ell^1$. This shows the desired identity for any unit $v\in V^+$. If $v$ is not a unit, add the identity for $v-e$ and $e$ (both units) and use the linearity in $v$ to get the identity for $v$.
		\item Start with Lemma~\ref{lem:JMS_QJ-identities}\ref{QJ9} and set $z=e$, $v = v\mu_e$ and $y = z\mu_e$:
		\[z\mu_e\mu(x,v\mu_e\mu_{x,e})+z\mu_e\mu(e,v\mu_e\mu_x) = x\mutil_{z\mu_e,v\mu_e}\mu_{x,e} + e\mutil_{z\mu_e,v\mu_e}\mu_x\;.\]
		Using Lemma~\ref{lem:JMS_basic_identities}\ref{itm:muxe} twice, we get
		\begin{align*}
			z\mu_e\mu(x,v\mu_e\mu_{x,e})
			&= x\mutil_{z\mu_e,v\mu_e}\mu_{x,e} + e\mutil_{z\mu_e,v\mu_e}\mu_x - z\mu_e\mu(e,v\mu_e\mu_x) \\
			&= x\mu_e\mu_{z,v}\mu_e\mu_{x,e} - e\mu_{z,v}\mu_e\mu_x - z\mu_e\mu(v\mu_e\mu_x,e) \\
			&= -e\mu(x\mu_e\mu_{z,v},x) - e\mu_{z,v}\mu_e\mu_x + e\mu(z,v\mu_e\mu_x)\;,
		\intertext{and by \ref{QJ34} with $x$ and $z$ interchanged, we deduce that}
			z\mu_e\mu(x,v\mu_e\mu_{x,e})
            &= - e\mu_{z,v}\mu_e\mu_x - e\mu(z\mu_e\mu_x,v)\;.
		\end{align*}
		Next, we apply $\mu_e\mu_{z,e}$ to this identity, and we use $v\mu_e\mu_{x,e} = x\mu_e\mu_{v,e}$, a consequence of Proposition~\ref{prop:JMS_JP1_units}\ref{itm:JP1_units}:
		\begin{align*}
		e\mu_{z,v}\mu_e\mu_x\mu_e\mu_{z,e} + e\mu(z\mu_e\mu_x,v)\mu_e\mu_{z,e}
		&= -z\mu_e\mu(x,x\mu_e\mu_{v,e})\mu_e\mu_{z,e} \\
		&= e\mu(z,z\mu_e\mu(x,x\mu_e\mu_{v,e}))\;.
		\end{align*}
		Finally, take \ref{QJ34} and set $v = x\mu_e\mu_{v,e}$ (this need not be a unit, but we have shown this identity for non-units as well). This gives
		\[e\mu(z\mu_e\mu(x,x\mu_e\mu_{v,e}),z) = e\mu(x\mu_e\mu_z,x\mu_e\mu_{v,e}) + e\mu(x\mu_e\mu_{v,e}\mu_e\mu_z,x)\;,\]
		hence
		\[e\mu(x\mu_e\mu_z,x\mu_e\mu_{v,e}) + e\mu(x\mu_e\mu_{v,e}\mu_e\mu_z,x) = e\mu_{z,v}\mu_e\mu_x\mu_e\mu_{z,e} + e\mu(z\mu_e\mu_x,v)\mu_e\mu_{z,e}\;.\]
		\item Set $z = e$ and $y = z\mu_e$ in Lemma~\ref{lem:JMS_QJ-identities}\ref{QJ7}. This gives
		\[\mu_e\mu_x\mu_e\mu_z\mu_e+\mu_e\mu_z\mu_e\mu_x\mu_e+\mu_{x,e}\mu_e\mu_z\mu_e\mu_{x,e} = \mu(z\mu_e\mu_{x,e}) + \mu(z\mu_e\mu_x,z)\;.\]
		We take the difference of this identity with the same identity, but interchanging $x$ and $z$. Using $z\mu_e\mu_{x,e} = x\mu_e\mu_{z,e}$, we get
		\[\mu_{x,e}\mu_e\mu_z\mu_e\mu_{x,e}-\mu_{z,e}\mu_e\mu_x\mu_e\mu_{z,e} = \mu(z\mu_e\mu_x,z) - \mu(x\mu_e\mu_z,x)\;,\]
		which we apply to $v\mu_e$:
		\[v\mu_e\mu_{x,e}\mu_e\mu_z\mu_e\mu_{x,e}-v\mu_e\mu_{z,e}\mu_e\mu_x\mu_e\mu_{z,e} = v\mu_e\mu(z\mu_e\mu_x,z) - v\mu_e\mu(x\mu_e\mu_z,x)\;.\]
		We repeatedly use Lemma~\ref{lem:JMS_basic_identities}\ref{itm:muxe} to get
		\begin{align*}
			v\mu_e\mu(z\mu_e\mu_x,z) - v\mu_e\mu(x\mu_e\mu_z,x)
			&= v\mu_e\mu_{x,e}\mu_e\mu_z\mu_e\mu_{x,e} - v\mu_e\mu_{z,e}\mu_e\mu_x\mu_e\mu_{z,e} \\
			&= x\mu_e\mu_{v,e}\mu_e\mu_z\mu_e\mu_{x,e} + e\mu_{v,z}\mu_e\mu_x\mu_e\mu_{z,e} \\
			&= -e\mu(x,x\mu_e\mu_{v,e}\mu_e\mu_z) + e\mu_{v,z}\mu_e\mu_x\mu_e\mu_{z,e} \\
			&= e\mu(x\mu_e\mu_z,x\mu_e\mu_{v,e}) - e\mu(z\mu_e\mu_x,v)\mu_e\mu_{z,e} \\
			&= -e\mu(x\mu_e\mu_z,e\mu_{x,v}) + e\mu(z,e\mu(z\mu_e\mu_x,v))
		\end{align*}
		in which the second last step follows from \ref{QJ33}.
		\item Set $z = v$ and $v = x\mu_e\mu_z$ in Lemma~\ref{lem:JMS_QJ-identities}\ref{QJ27} to get
		\[e\mu(x\mu_e\mu_z,e\mu_{x,v}) - v\mu_e\mu(x\mu_e\mu_z,x) = x\mu_e\mu(x\mu_e\mu_z,v)\;.\]
		Combining this with \ref{QJ32} gives
		\[x\mu_e\mu(x\mu_e\mu_z,v) = e\mu(z,e\mu(z\mu_e\mu_x,v)) - v\mu_e\mu(z\mu_e\mu_x,z)\;.\]
		Next, substituting $z\mu_e\mu_x$ for $z$, $z$ for $v$ and $v$ for $x$ in Lemma~\ref{lem:JMS_QJ-identities}\ref{QJ27} to get
		\[e\mu(z,e\mu(z\mu_e\mu_x,v)) - v\mu_e\mu(z\mu_e\mu_x,z) = z\mu_e\mu_x\mu_e\mu_{z,v}\;.\]
		Combining these last two identities gives us
		\[z\mu_e\mu_x\mu_e\mu_{z,v} = x\mu_e\mu(x\mu_e\mu_z,v)\;.\]
		Substituting $x\mu_e$ for $y$, $x$ for $z$ and $z$ for $v$ turns this into
		\[x\mu_y\mu_{x,z} = y\mu_{y\mu_x,z}\;,\]
		which is the identity we wanted.\qedhere
	\end{enumerate}
\end{proof}

\subsection{The construction gives a Jordan pair}

We can now prove \ref{axiom:JP1} and \ref{axiom:JP2} by linearizing their counterparts for units.

\begin{proposition}\label{prop:JP1}
	In a local Moufang set where \hyperref[itm:J1]{\normalfont{(J1-4)}} holds, we have the following identities:
	\begin{enumerate}
		\item $\{x\,y\,w\mu_{x,z}\} + \{z\,y\,w\mu_x\} = \{y\,x\,w\}\mu_{x,z} + \{y\,z\,w\}\mu_x$
			\begin{flushright}for all units $y,w\in V^-$ and all units $x,z\in V^+$;\end{flushright} \label{itm:JP1almostlinear}
		\item $\{x\,y\,w\mu_{v,z}\} + \{v\,y\,w\mu_{x,z}\} + \{z\,y\,w\mu_{x,v}\} = \{y\,x\,w\}\mu_{v,z} + \{y\,v\,w\}\mu_{x,z} + \{y\,z\,w\}\mu_{x,v}$
			\begin{flushright}for all $y,w\in V^-$ and all $x,z,v\in V^+$;\end{flushright} \label{itm:JP1linear}
		\item $\{x\,y\,w\mu_{x,x}\} = \{y\,x\,w\}\mu_{x,x}$ \quad for all $y,w\in V^-$ and $x\in V^+$.
	\end{enumerate}
\end{proposition}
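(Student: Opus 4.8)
The plan is to work throughout in the $\mu$-map/Jordan-pair dictionary indicated before Proposition~\ref{prop:JMS_JP1_units}, reading a triple product $\{a\,b\,c\}$ — for $a,c$ on the same side and $b$ on the other — as $b\mu_{a,c}$, with $\mu_{\cdot,\cdot}$ replaced by $\mutil_{\cdot,\cdot}$ when $a,c\in V^-$; thus for instance $\{x\,y\,w\mu_{x,z}\}=y\mu_{x,w\mu_{x,z}}$ and $\{y\,x\,w\}\mu_{x,z}=x\mutil_{y,w}\mu_{x,z}$. With this reading, the first identity is exactly Lemma~\ref{lem:JMS_QJ-identities}\ref{QJ9}: the hypotheses (units $x,z\in V^+$ and units $y,w\in V^-$) and the four terms correspond verbatim, the symmetry of $\mu_{\cdot,\cdot}$ and $\mutil_{\cdot,\cdot}$ being immediate from $\mu_{a,c}=\mu_{a+c}-\mu_a-\mu_c$. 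So nothing new is needed for (i).

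For (ii) I linearize (i) in the variable $x$, following the recipe described after Proposition~\ref{prop:JMS_JP1_units}. Fix units $x,v,z\in V^+$ and units $y,w\in V^-$, and substitute $x\mapsto x+v\cdot\ell$ in (i); expanding by the bilinearity of $\mu_{\cdot,\cdot}$, $\mutil_{\cdot,\cdot}$ and of the triple products, together with $\mu_{x+v\cdot\ell}=\mu_x+\mu_{x,v}\cdot\ell+\mu_v\cdot\ell^2$ (valid whenever $x+v\cdot\ell$ is a unit, by \ref{itm:J4} and Proposition~\ref{prop:mu-x.s}), both sides of the resulting equality take the form $A+B\cdot\ell+C\cdot\ell^2$ with $A$ and $C$ independent of $\ell$. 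A direct expansion shows that the two $A$'s are the two sides of (i) for $x$, the two $C$'s are the two sides of (i) for $v$, and the two $B$'s are precisely the two sides of (ii) for $x,v,z,y,w$. Since $v\cdot k$ is a unit for $|k|\le 3$ by \ref{itm:J3}, at most one $\ell\in\{1,2,3\}$ can make $x+v\cdot\ell$ a non-unit, so (i) does apply to $x+v\cdot\ell$ for some such $\ell$; as the $A$'s and $C$'s already agree, this forces $(B_{\mathrm{left}}-B_{\mathrm{right}})\cdot\ell=0$ in $U_\infty$, and $U_\infty$ is uniquely $k$-divisible for $k\le 3$ by Proposition~\ref{prop:ndivglobal} and \ref{itm:J3}, so (ii) holds whenever all five variables are units. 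Finally I extend to arbitrary $x,v,z\in V^+$ and $y,w\in V^-$ one variable at a time: each side of (ii) is additive in each of the five variables separately, by \ref{itm:J4} and the linearity of the triple products, so if, say, $x$ is not a unit I pick a unit $e$, apply the identity already established to $x+e$ and to $e$, and subtract — precisely the extension procedure of Remark~\ref{rem:expression_muxz}.

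For (iii) I specialize the now fully general identity (ii) by setting $v=z=x$; the three summands on each side then coincide, giving $3\{x\,y\,w\mu_{x,x}\}=3\{y\,x\,w\}\mu_{x,x}$, and dividing by $3$ is legitimate since $U_\infty$ is uniquely $3$-divisible (Proposition~\ref{prop:ndivglobal} and \ref{itm:J3}). The only real labour is the polynomial bookkeeping in the linearization for (ii) and the care needed in the variable-by-variable passage to non-units; I do not expect a genuine obstacle, since the substantive content already sits in Lemma~\ref{lem:JMS_QJ-identities}\ref{QJ9} and in the divisibility results of Section~\ref{sec:SpecialLMS}.
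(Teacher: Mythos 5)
Your proof is correct and follows essentially the same route as the paper: linearize in $x$ via $x\mapsto x+v\cdot\ell$ to pass from (i) to (ii), extend to non-units one variable at a time using a unit $e$ and additivity, and specialize $v=z=x$ with unique $3$-divisibility for (iii). The only (harmless) difference is in (i), where you cite Lemma~\ref{lem:JMS_QJ-identities}\ref{QJ9} directly instead of re-deriving it by linearizing $\{x\,y\,w\mu_x\}=\{y\,x\,w\}\mu_x$ from Proposition~\ref{prop:JMS_JP1_units}\ref{itm:JP1_units} as the paper does; these are the same identity and the same computation.
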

\begin{proof}\leavevmode
	\begin{enumerate}
		\item After renaming, $\{x\,y\,w\mu_x\} = \{y\,x\,w\}\mu_x$ follows from Proposition~\ref{prop:JMS_JP1_units}\ref{itm:JP1_units}. We linearize $x$ to $x+z\cdot\ell$ in this identity, and the equality of the coefficients of $\ell^1$ is then the desired identity.
		\item We linearize $x$ to $x+v\cdot\ell$ in \ref{itm:JP1almostlinear} and take the coefficients of $\ell^1$ to get the identity we want for units, i.e.
		\[\{x\,y\,w\mu_{v,z}\} + \{v\,y\,w\mu_{x,z}\} + \{z\,y\,w\mu_{x,v}\} = \{y\,x\,w\}\mu_{v,z} + \{y\,v\,w\}\mu_{x,z} + \{y\,z\,w\}\mu_{x,v}\]
		holds for all units $x,z,v\in V^-$ and all units $y,w\in V^+$. We now claim that the variables do not need to be units. If any of the variables $X$ is not a unit, take any unit $e$ and write $X = (X-e)+e$ (or $X = (X\mintil e)\plustil e$ if the variable is in $V^-$). The required identity then follows, using the linearity in $X$, and the fact that the identity holds for the units $X-e$ and $e$. Hence the identity holds for any $x,z,v\in V^-$ and $y,w\in V^+$.
		\item We take $x=z=v$ in \ref{itm:JP1linear}, and hence get
		\[\{x\,y\,w\mu_{x,x}\}\cdot 3 = \{y\,x\,w\}\mu_{x,x}\cdot 3\]
		for any $x\in V^-$ and $y,w\in V^+$. By the unique $3$-divisibility, we get the desired identity.\qedhere
	\end{enumerate}
\end{proof}

\begin{proposition}\label{prop:JP2}
	In a local Moufang set where \hyperref[itm:J1]{\normalfont{(J1-4)}} holds, we have the following identities:
	\begin{enumerate}
		\item $\{v\,x\mu_y\,z\} + \{x\,v\mu_y\,z\} = \{y\mu_{x,v}\,y\,z\}$
			\quad for all units $x,z,v\in V^+$ and any unit $y\in V^-$; \vspace{1ex}
		\item $\{v\,x\mu_{y,w}\,z\} + \{x\,v\mu_{y,w}\,z\} = \{y\mu_{x,v}\,w\,z\} + \{w\mu_{x,v}\,y\,z\}$
			\begin{flushright} for all $x,z,v\in V^+$ and all $y,w\in V^-$; \end{flushright} \label{itm:JP2linear}
		\item $\{x\,x\mu_{y,y}\,z\} = \{y\mu_{x,x}\,y\,z\}$ \quad for all $x,z\in V^+$ and $y\in V^+$.
	\end{enumerate}
\end{proposition}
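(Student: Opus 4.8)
The plan is to mirror the proof of Proposition~\ref{prop:JP1}, obtaining each statement by linearizing the units-only identity of Proposition~\ref{prop:JMS_JP2_units}\ref{itm:JP2_units}. Written in triple-product notation (recall $\{a\,b\,c\}=b\mu_{a,c}$, and that $\mu$-maps interchange $V^+$ and $V^-$), that identity reads $\{x\,x\mu_y\,z\}=\{y\mu_x\,y\,z\}$ for units $x,z\in V^+$ and $y\in V^-$. To get~(i), I would linearize $x$ to $x+v\cdot\ell$. On the left, using that $\mu_y$ is a group isomorphism $V^+\to V^-$ and the multilinearity of the triple product, the coefficient of $\ell^1$ is $\{v\,x\mu_y\,z\}+\{x\,v\mu_y\,z\}$; on the right, expanding $\mu_{x+v\cdot\ell}$ via the definition of $\mu_{x,v}$ and $\mu_{v\cdot\ell}=\mu_v\cdot\ell^{2}$ (Proposition~\ref{prop:mu-x.s}), the coefficient of $\ell^1$ is $\{y\mu_{x,v}\,y\,z\}$. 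The $\ell^0$ and $\ell^2$ coefficients agree by the starting identity (applied to $x$, resp. to $v$), so equating the $\ell^1$ coefficients gives~(i). By \ref{itm:J3}, $x+v\cdot\ell$ is a unit for enough values of $\ell$ to justify this.

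For~(ii), I would first treat the case where all five variables are units, by linearizing $y$ to $y\plustil w\cdottil\ell$ in~(i). On the left one uses $x\mu_{y\plustil w\cdottil\ell}=x\mu_y\plustil(x\mutil_{y,w})\cdottil\ell\plustil(x\mu_w)\cdottil\ell^{2}$ (the definition of $\mutil_{y,w}$, bilinearity of $\mutil_{\cdot,\cdot}$, and $\mu_{w\cdottil\ell}=\mu_w\cdottil\ell^{2}$), followed by the $\Z$-linear maps $\mu_{v,z}$ and $\mu_{x,z}$; on the right one uses $\Z$-linearity of $\mu_{x,v}$ and bilinearity of $\mu_{\cdot,\cdot}$. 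The $\ell^0$ and $\ell^2$ coefficients cancel by~(i) (applied to $y$, resp. to $w$), and the $\ell^1$ coefficients give~(ii) for units. I would then remove the unit hypotheses exactly as in Proposition~\ref{prop:JP1}\ref{itm:JP1linear}: both sides of~(ii) are additive in each variable separately, so any non-unit variable $X$ may be written $X=(X-e)+e$ (or $X=(X\mintil e)\plustil e$) for a fixed unit $e$ --- with $X-e$ then a unit --- and the general case follows by linearity.

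Part~(iii) is then immediate: taking $v=x$ and $w=y$ in~(ii) doubles both sides, so $\{x\,x\mu_{y,y}\,z\}\cdot2=\{y\mu_{x,x}\,y\,z\}\cdot2$, and we divide by $2$ using the unique $2$-divisibility of $V^+$ (Proposition~\ref{prop:ndivglobal}).

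The main obstacle is the bookkeeping in~(ii): keeping straight which subexpressions lie in $V^+$ versus $V^-$, expanding $\mu_{y\plustil w\cdottil\ell}$ acting on a $V^+$-element, and verifying that the linearization is legitimate here (so $w$ must be a unit and $y\plustil w\cdottil\ell$ a unit for sufficiently many $\ell$). Parts~(i) and~(iii) are comparatively short.
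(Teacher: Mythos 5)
Your proposal is correct and follows essentially the same route as the paper: each part is obtained by linearizing the units identity $\{x\,x\mu_y\,z\}=\{y\mu_x\,y\,z\}$ of Proposition~\ref{prop:JMS_JP2_units}, first in $x$, then in $y$, with the non-unit cases of~(ii) handled by additivity exactly as in Proposition~\ref{prop:JP1}, and~(iii) obtained by specializing and dividing by $2$. The bookkeeping you flag in~(ii) works out as you describe and matches the paper's argument.
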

\begin{proof}\leavevmode
	\begin{enumerate}
		\item Using the definition of the triple product, we can rewrite Proposition~\ref{prop:JMS_JP2_units}\ref{itm:JP2_units} as $\{x\,x\mu_y\,z\} = \{y\mu_x\,y\,z\}$ for all units $x,z\in V^+$ and all units $y\in V^-$. We linearize $x$ to $x+v\cdot\ell$ and take coefficients of $\ell^1$ to get the desired identity.
		\item We linearize $y$ to $y\plustil w\cdottil\ell$ and take the coefficients of $\ell^1$ to get the required identity for units, i.e.
		\[\{v\,x\mu_{y,w}\,z\} + \{x\,v\mu_{y,w}\,z\} = \{y\mu_{x,v}\,w\,z\} + \{w\mu_{x,v}\,y\,z\}\]
		holds for all units $x,z,v\in V^+$ and all units $y,w\in V^-$. As in the proof of \ref{prop:JP1}\ref{itm:JP1linear}, we can use linearity to prove this identity for all $x,z,v\in V^+$ and $y,w\in V^-$.
		\item In \ref{itm:JP2linear}, set $v=x$ and $w=y$ to get
		\[\{x\,x\mu_{y,y}\,z\}\cdot 2 = \{y\mu_{x,x}\,y\,z\}\cdot 2\]
		for any $x,z\in V^+$ and any $y\in V^-$. By the unique $2$-divisibility, we get the desired identity.\qedhere
	\end{enumerate}
\end{proof}

Using these linearizations, we can immediately show that we have a Jordan pair.

\begin{theorem}\label{thm:localJP}
	Let $\M$ be a local Moufang set satisfying \hyperref[itm:J1]{\normalfont{(J1-4)}}.
    Then Construction~\ref{constr:Jpair} gives a Jordan pair $(V^+,V^-)$ with
	\[Q_x^+ = \mu_{x,x}\cdot\frac{1}{2}\text{ for all $x\in V^+$}\qquad\text{and}\qquad Q_y^- = \mutil_{y,y}\cdottil\frac{1}{2}\text{ for all $y\in V^-$.}\]
	Furthermore, the non-invertible elements form a proper ideal $I = (I^+,I^-) = (\overline{0},\overline{\infty})$,
    so $V$ is a local Jordan pair with $\Rad V = I$. Moreover, $V^+$ is uniquely $2$- and $3$-divisible.
\end{theorem}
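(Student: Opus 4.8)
The plan is to bolt together the preparatory results and then handle the local structure separately. Since $\M$ satisfies \ref{itm:J3}, Proposition~\ref{prop:ndivglobal} with $n=3$ shows that $U_\infty$ — hence the isomorphic group $V^+$ (the isomorphism being $x\mapsto\alpha_x$, which is a homomorphism because $\alpha_{x+z}=\alpha_x\alpha_z$), and $V^-$ by symmetry — is uniquely $2$- and $3$-divisible; in particular there is no $2$-torsion, which already establishes the last sentence of the theorem. By \ref{itm:J4} the maps $Q_x^+:=\mu_{x,x}\cdot\tfrac12$ and $Q_y^-:=\mutil_{y,y}\cdottil\tfrac12$ are then well-defined quadratic maps $V^+\to\Hom(V^-,V^+)$ and $V^-\to\Hom(V^+,V^-)$; since $\mu_{x+z,x+z}-\mu_{x,x}-\mu_{z,z}=2\mu_{x,z}$ by bilinearity, their polarizations are $Q_{x,z}^+=\mu_{x,z}$ and $Q_{y,w}^-=\mutil_{y,w}$, so the triple product becomes $\{x\,y\,z\}=y\mu_{x,z}$ for $x,z\in V^+$, $y\in V^-$ (and the mirror for the other sign).

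Next I would verify the Jordan-pair axioms. Unwinding the triple product in terms of $Q^{\pm}$, Proposition~\ref{prop:JP1} is precisely \ref{axiom:JP1} together with its linearizations, and Proposition~\ref{prop:JP2} is \ref{axiom:JP2}. As $V$ has no $2$-torsion, Proposition~\ref{prop:JPsufficientaxioms}\ref{itm:sufficient_JP} then yields that $(V^+,V^-,Q)$ is a Jordan pair over $\Z$.

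I would then identify the non-invertible elements and the ideal. For a unit $t\in V^+$, Lemma~\ref{lem:JMS_identities}\ref{itm:xyx} and unique $2$-divisibility give $yQ_t^+=y\mu_t$ for all $y\in V^-$, and $\mu_t$ restricts to a bijection $V^-\to V^+$; hence $Q_t^+$ is invertible, so $t$ is invertible, and likewise for units of $V^-$. Since $V^+=X\setminus\overline\infty$ and $V^-=X\setminus\overline 0$, every element of $V^+$ (resp. $V^-$) is a unit or lies in $\overline 0$ (resp. $\overline\infty$), so it remains to treat $I:=(\overline 0,\overline\infty)$. Here $\overline 0$ is a subgroup of $V^+$ (the kernel of $x\mapsto\overline{\alpha_x}$), closed under $\cdot\tfrac12$ by \ref{itm:J3} (an element $b$ with $b\cdot2\lsim 0$ cannot be a unit), and likewise $\overline\infty\leq V^-$. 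Moreover all of the data descends to the quotient Moufang set $\overline{\M}$, which again satisfies \hyperref[itm:J1]{(J1-4)}: the $\mu$-maps lie in $\Sym(X,\lsim)$, and by \ref{itm:J4} and Remark~\ref{rem:expression_muxz} so do $\mu_{\cdot,\cdot}$ and $\mutil_{\cdot,\cdot}$. Consequently $\mu_{x,z}$ maps $V^-$ into $\overline 0$ as soon as $x\lsim 0$ (its reduction is $\mu_{\overline{x},\overline{z}}=\mu_{\overline 0,\overline{z}}=0$ in $\overline{\M}$), and the mirror statement holds for $\mutil_{\cdot,\cdot}$; this yields at once the three closure conditions $vQ_u\in I^{\sigma}$, $uQ_v\in I^{-\sigma}$, $\{v'vu\}\in I^{\sigma}$ (for $u\in I^{\sigma}$ and arbitrary $v\in V^{-\sigma}$, $v'\in V^{\sigma}$), so $I$ is an ideal, and it also shows that each $x\in\overline 0$ is non-invertible, since $Q_x^+=\tfrac12\mu_{x,x}$ then has image in $\overline 0\subsetneq V^+$. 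The ideal $I$ is proper because $\abs{\overline X}>2$ forces a unit to exist. Combining, the non-invertible elements of $V$ are exactly $I$; hence $V$ is a local Jordan pair, and $\Rad V=I$ by Proposition~\ref{prop:JPbasic}\ref{itm:JPbasicD}.

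The step I expect to be most delicate is the second one: one must match the statements of Propositions~\ref{prop:JP1} and \ref{prop:JP2} exactly to \ref{axiom:JP1}, all of its linearizations, and \ref{axiom:JP2} in the form demanded by Proposition~\ref{prop:JPsufficientaxioms}\ref{itm:sufficient_JP}, and the bookkeeping between $\{x\,y\,z\}=yQ_{x,z}$ and $y\mu_{x,z}$ is the only real source of friction. A secondary point needing care is the verification in the third step that $\overline 0$ and $\overline\infty$ are genuine subgroups and that $\mu_{\cdot,\cdot}$ and $\mutil_{\cdot,\cdot}$ carry the $\lsim$-classes of $0$ and $\infty$ back into those classes, which is exactly what makes the ideal check painless.
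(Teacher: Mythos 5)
Your proof follows the paper's skeleton for everything except the locality argument: unique $2$- and $3$-divisibility from \ref{itm:J3}, the quadratic maps and their polarizations $Q^+_{x,z}=\mu_{x,z}$ from \ref{itm:J4}, the axioms from Propositions~\ref{prop:JP1} and~\ref{prop:JP2} fed into Proposition~\ref{prop:JPsufficientaxioms}\ref{itm:sufficient_JP}, and invertibility of units from $Q^+_t=\mu_{t,t}\cdot\frac12=\mu_t$. All of that matches the paper and is fine.

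Where you diverge is the verification that $I=(\overline0,\overline\infty)$ is an ideal, and there your key step is under-justified to the point of circularity. The paper proves the closure conditions by using Remark~\ref{rem:expression_muxz} together with the already-established axiom \ref{axiom:JP1} (in the form $\{xyx\}=\{y\,x\,x\mu_y\}\mu_y$) to rewrite each of $vQ_u$, $uQ_v$, $\{v'vu\}$ as a $\Z$-linear combination of elements $u\mu_z$ with $u\in I^\sigma$ and $z$ a unit; for those, $\mu_z\in\Sym(X,\lsim)$ and $0\mu_z=\infty$ do all the work. You instead assert that $\mu_{\cdot,\cdot}$ descends to the quotient and conclude $\overline{y\mu_{x,z}}=\overline{y}\mu_{\overline0,\overline z}=0$ when $x\lsim0$. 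But the justification you offer --- that the $\mu$-maps lie in $\Sym(X,\lsim)$ --- only says that $y\mapsto y\mu_u$ respects $\lsim$; it says nothing about dependence on the \emph{subscript} $u$ modulo $\lsim$, and the statement that $\overline{y\mu_{x,z}}$ depends only on $\overline x$ is, by bilinearity, literally equivalent to the closure condition $y\mu_{a,z}\in\overline0$ for $a\lsim0$ that you are trying to establish. The missing ingredient is that equivalent units induce the same $\mu$-map on $\overline X$ (this follows from Proposition~\ref{pr:mu} and \ref{itm:LM2'}: each factor of $\mu_u=g\alpha_u h$ is the unique element of its root group with a prescribed action on $\overline X$ determined by $\overline u$); with that, $y\mu_{a,z}=\bigl(y\mu_{z\cdot2+a}-y\mu_{z\cdot2}\bigr)-\bigl(y\mu_{z+a}-y\mu_z\bigr)\in\overline0$ for $a\lsim0$ and $z$ a unit, and bilinearity finishes the job. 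Note also that your two stated consequences of the descent do not literally cover the condition $uQ_v\in I^{-\sigma}$ for $v\notin I^{-\sigma}$ --- there one needs $\mutil_{v,v}$ to carry $\overline0$ into $\overline\infty$, not a statement about radical subscripts --- though this is the easy case and is handled, as in the paper, by expanding $\mutil_{v,v}$ as a combination of $\mu_w$'s for units $w$. With these repairs your argument is correct; the paper's detour through \ref{axiom:JP1} is precisely how it avoids having to prove the descent of $\mu_{\cdot,\cdot}$ in its subscripts.
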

\begin{proof}
	By \ref{itm:J2}, both $V^+$ and $V^-$ are $\Z$-modules, and by the fact that $\mu$-maps are morphisms between the (abelian) groups $V^+$ and $V^-$, the maps $Q_x^+$ and $Q_y^-$ are homomorphisms. By \ref{itm:J4}, the map $x\mapsto \mu_{x,x}$ is quadratic in $x$ and $y\mapsto\mutil_{y,y}$ is quadratic in $y$. By the Propositions~\ref{prop:JP1} and \ref{prop:JP2} (which also hold when interchanging $+$ and $-$), \ref{axiom:JP1} and \ref{axiom:JP2} hold, along with their linearizations, so by Proposition~\ref{prop:JPsufficientaxioms}\ref{itm:sufficient_JP}, $(V^+,V^-)$ is a Jordan pair.

	Next, we want to prove the Jordan pair is local. We first claim that if $x\in V^\sigma$ is a unit, then it is invertible in the Jordan pair. For such $x$, we have $Q_x^\sigma = \mu_x$, which is an involution and hence invertible with $x^{-1} = x\mu_x = -x$. Next, we show that $I$ is an ideal. If $x\in I^\sigma$, we have $x\mu_z\in I^{-\sigma}$ for any unit $z$. For any $y\in V^{-\sigma}$ the element $xQ_y$ is a linear combination of such $x\mu_z$ by Remark~\ref{rem:expression_muxz}, so $xQ_y\in I^{-\sigma}$. Next, if $x\in I^\sigma$ and $y\in V^{-\sigma}\setminus I^{-\sigma}$, we have $yQ_x = \{x y x\} = \{y\,x\,x\mu_y\}\mu_y\in I^{-\sigma}$, again because this is a linear combination of $x\mu_z\in I^{-\sigma}$. Here we used the fact that $y$ is a unit (so $\mu_y$ is invertible) together with \ref{axiom:JP1}. Finally, if $x\in I^\sigma$, $y\in V^{-\sigma}\setminus I^{-\sigma}$ and $z\in V^\sigma$, we have $\{xyz\} = \{y\,x\,z\mu_y\}\mu_y\in I^\sigma$. Hence $I$ is an ideal. In particular, $I$ does not contain any invertible elements. As all elements of $V\setminus I$ are invertible, $I$ is precisely the set of non-invertible elements; we conclude that $V$ is a local Jordan pair and $I = \Rad V$. The fact that $V^+$ is uniquely $2$- and $3$-divisible is a consequence of \ref{itm:J3}, using \hyperref[itm:J1]{(J1-2)}.
\end{proof}

\section{Back and forth}\label{sec:connect}

In section~\ref{sec:JP} we described a way to create a local Moufang set $\M(V)$ from a local Jordan pair $V$, while section~\ref{sec:LMStoJP} contains a way to construct local Jordan pairs from certain local Moufang sets. Now we will investigate how these two constructions interact.

Suppose first that we start with a local Jordan pair $V$ and apply Theorem~\ref{thm:MV} to obtain a local Moufang set $\M(V)$.
It is natural to ask whether we can apply Theorem~\ref{thm:localJP} to $\M(V)$ in order to retrieve the local Jordan pair $V$.
We begin by verifying that the conditions required to apply this theorem are indeed satisfied.
\begin{proposition}
	Let $V = (V^+,V^-)$ be a local Jordan pair such that $V^+$ is uniquely $2$- and $3$-divisible. Then $\M(V)$ satisfies the conditions \hyperref[itm:J1]{\normalfont{(J1-4)}}
    from Construction~\ref{constr:Jpair}.
\end{proposition}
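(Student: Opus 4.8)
The plan is to verify each of \hyperref[itm:J1]{\normalfont{(J1--4)}} directly from the description of $\M(V) = \M(U,\tau)$ in Section~\ref{sec:JP}, where $X = \P(V)$, $0 = [0,0]$, $\infty = [e,e^{-1}]$, $U_\infty = U = \{\alpha_v \mid v\in V^+\}$ and $\tau = \mu_e$; recall that $\M(V)$ is a local Moufang set by Theorem~\ref{thm:MV}. The first thing to record is that the units of $\M(V)$ are exactly the points $[t,0]$ with $t$ invertible in $V$: the points $\nsim\infty$ are the $[x,0]$, and $[x,0]\sim[0,0]$ iff $x\in\Rad V^+$ by Definition~\ref{def:radequiv}, which is the set of non-invertible elements by Proposition~\ref{prop:JPbasic}\ref{itm:JPbasicD}. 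Then \ref{itm:J2} is immediate, because $\alpha_v\alpha_w = \alpha_{v+w} = \alpha_w\alpha_v$ by Definition~\ref{def:alphazetaJP}, so $U_\infty$ is abelian; and \ref{itm:J1} is exactly the last assertion of Proposition~\ref{prop:gammaJP}, which states $-([t,0])\tau = -([t,0]\tau)$ for every invertible $t$, i.e.\ for every unit of $\M(V)$, so $\M(V)$ is special.

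For \ref{itm:J3}, I would take a unit $x = [t,0]$ (so $t$ is invertible) and $n\in\{2,3\}$. Since $\alpha_{[t,0]} = \alpha_t$ (Proposition~\ref{prop:gammaJP}) acts on $X\setminus\overline{\infty}$ by $[y,0]\mapsto[y+t,0]$, we have $x\cdot n = 0\alpha_t^n = [nt,0]$, so it is enough to show that $nt$ is invertible in $V$, i.e.\ $nt\notin\Rad V^+$. Here the divisibility hypothesis enters: since $Q$ is quadratic, $Q_{nt} = n^2 Q_t$, which as a map $V^-\to V^+$ is the composite of the bijection $Q_t$ with multiplication by $n^2$ on $V^+$; as $V^+$ is uniquely $2$- and $3$-divisible it is uniquely $4$- and $9$-divisible, hence this composite is a bijection for $n\in\{2,3\}$. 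Thus $Q_{nt}$ is invertible, $nt$ is invertible, and \ref{itm:J3} holds.

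Condition \ref{itm:J4} requires a little bookkeeping with the $\Z$-modules of Construction~\ref{constr:Jpair}. Using \eqref{eq:P(V)} one checks that $\phi^+\colon V^+\to X\setminus\overline{\infty},\ x\mapsto[x,0]$ and $\phi^-\colon V^-\to X\setminus\overline{0},\ y\mapsto[e,e^{-1}+y]$ are group isomorphisms onto the $\Z$-modules $(V^+,+)$ and $(V^-,\plustil)$ of that construction (the relevant additions are read off Definition~\ref{def:alphazetaJP}). By Proposition~\ref{prop:JordanMuaction}, for invertible $t$ the map $\mu_{[t,0]} = \mu_t$ sends $\phi^-(v)\mapsto\phi^+(vQ_t)$, and, writing $[e,e^{-1}+v] = [-v^{-1},0]$ (Proposition~\ref{prop:PVrepr}), the map $\mu_{\phi^-(v)}$ sends $\phi^+(x)\mapsto\phi^-(xQ_v)$ for invertible $v$. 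Therefore $\mu_{[x,0],[z,0]} = \mu_{[x+z,0]}-\mu_{[x,0]}-\mu_{[z,0]}$ corresponds under $\phi^\pm$ to $Q_{x+z}-Q_x-Q_z = Q_{x,z}$, and $\mutil_{\phi^-(v),\phi^-(w)}$ corresponds to $Q_{v+w}-Q_v-Q_w = Q_{v,w}$. Since $(x,z)\mapsto Q_{x,z}$ and $(v,w)\mapsto Q_{v,w}$ are bilinear (part of what it means for $Q$ to be a quadratic map), transporting them along $\phi^\pm$ yields bilinear maps $\mu_{\cdot,\cdot}$ and $\mutil_{\cdot,\cdot}$ extending \eqref{map:muxz} and \eqref{map:muyw}, which is \ref{itm:J4}.

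Conditions \ref{itm:J1} and \ref{itm:J2} are essentially free, so the only genuinely new input is \ref{itm:J3}, where the unique $2$- and $3$-divisibility of $V^+$ is precisely what keeps $Q_{2t}$ and $Q_{3t}$ invertible; the mildly fiddly point is the identification of $\mu_{\cdot,\cdot}$ and $\mutil_{\cdot,\cdot}$ with the bilinearizations $Q_{\cdot,\cdot}$ in \ref{itm:J4}. I do not expect any deeper obstacle.
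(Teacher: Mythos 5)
Your proposal is correct and follows essentially the same route as the paper: (J1) from Proposition~\ref{prop:gammaJP}, (J2) from $\alpha_v\alpha_w=\alpha_{v+w}$, (J3) from $Q_{nt}=n^2Q_t$ together with the divisibility hypothesis, and (J4) by identifying $\mu_{\cdot,\cdot}$ and $\mutil_{\cdot,\cdot}$ with the bilinearizations $Q_{\cdot,\cdot}$. If anything, your treatment of (J3) makes explicit the point the paper leaves implicit, namely that invertibility of $Q_t\cdot n^2$ requires multiplication by $n^2$ to be bijective on $V^+$, which is exactly where the unique $2$- and $3$-divisibility enters.
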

\begin{proof}
	By the definition of $\M(V)$ we have $[x,0]\alpha_{[v,0]} = [x+v,0]$, so $U_\infty = \{\alpha_x\mid x\in \P(V)\setminus\overline{\infty}\}\cong V^+$. Hence $U_\infty$ is abelian and by Proposition~\ref{prop:gammaJP}, $\M(V)$ is special. Next, if $[x,0]$ is a unit, then $x$ is invertible, which means $Q_x$ is invertible. As $Q_{2x} = Q_x\cdot 4$ and $Q_{3x} = Q_x\cdot 9$, we also have $2x$ and $3x$ invertible, so $[2x,0]$ and $[3x,0]$ are also units. This means \hyperref[itm:J1]{(J1-3)} are satisfied.

	To show \ref{itm:J4}, we compute $\mu_{x,x'}$ for units $x=[v,0]$ and $x'=[v',0]$ such that $x+x'$ is also a unit. By Proposition~\ref{prop:gammaJP}, we have $\mu_{x,x'} = \mu_{v+v'}-\mu_v-\mu_{v'}$. If we apply this to any $[e,e^{-1}+y]$, we get
	\[[e,e^{-1}+y](\mu_{v+v'}-\mu_v-\mu_{v'}) = [yQ_{v+v'},0] - [yQ_v,0]-[yQ_{v'},0] = [yQ_{v,v'},0]\;.\]
	Therefore, we can define $\mu_{x,x'}$ for arbitrary $x$ and $x'$ by
	\[ [e,e^{-1}+y]\mu_{x,x'} := [yQ_{v,v'},0]\;. \]
	As $(x,x')\mapsto Q_{v,v'}$ is bilinear, so is the map $(x,x')\mapsto\mu_{x,x'}$. A similar argument shows that we can define
	\[[x,0]\mutil_{[e,e^{-1}+w],[e,e^{-1}+w']} := [e,e^{-1}+xQ_{w,w'}]\]
	for arbitrary $y=[e,e^{-1}+w]$ and $y'=[e,e^{-1}+w']$, and that $(y,y')\mapsto\mutil_{y,y'}$ is bilinear. Hence \ref{itm:J4} holds.
\end{proof}
We now know that we can apply Theorem~\ref{thm:localJP} on $\M(V)$, so we can compare the resulting local Jordan pair to the original local Jordan pair $V$.

\begin{theorem}
	Let $V = (V^+,V^-)$ be a local Jordan pair with quadratic maps $Q$ such that $V^+$ is uniquely $2$- and $3$-divisible. Denote the local Jordan pair we get from applying Theorem~\ref{thm:localJP} to $\M(V)$ by $W = (W^+,W^-)$. Then $V\cong W$.
\end{theorem}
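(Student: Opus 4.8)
The plan is to exhibit the isomorphism $V\cong W$ explicitly and verify it directly; there is no new conceptual ingredient here, only a careful matching of the two constructions. First I would unwind Construction~\ref{constr:Jpair} applied to $\M=\M(V)$. By Definition~\ref{def:alphazetaJP}, on $\M(V)$ we have $[x,0]\alpha_{[v,0]}=[x+v,0]$, so the module $W^+=X\setminus\overline{\infty}=\{[x,0]\mid x\in V^+\}$ carries the addition $[x,0]+[z,0]=[0,0]\alpha_{[x,0]}\alpha_{[z,0]}=[x+z,0]$, and $h_+\colon W^+\to V^+$, $[x,0]\mapsto x$, is an isomorphism of $\Z$-modules. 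Dually, since $\gamma_{y\tau}$ is (by the remark following Construction~\ref{constr:Jpair}) the unique element of $U_0=\{\zeta_w\mid w\in V^-\}$ sending $\infty=[e,e^{-1}]$ to $y$, and since $[e,e^{-1}+\eta']\zeta_\omega=[e,e^{-1}+\eta'+\omega]$, we get $[e,e^{-1}+\eta]\plustil[e,e^{-1}+\omega]=\infty\zeta_\eta\zeta_\omega=[e,e^{-1}+\eta+\omega]$, so $h_-\colon W^-\to V^-$, $[e,e^{-1}+y]\mapsto y$, is an isomorphism of $\Z$-modules. (We regard $V$ as a Jordan pair over $\Z$ by restriction of scalars; this is the category in which the claimed isomorphism lives, since $\M(V)$ forgets the base ring.)

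Next I would compute the quadratic maps of $W$ in these coordinates. The bilinear maps $\mu_{\cdot,\cdot}$ and $\mutil_{\cdot,\cdot}$ that (J4) provides for $\M(V)$ are unique once they exist, because the units additively generate $V^+$ (for a non-unit $r$ and a unit $e$, $r=(r+e)-e$ is a difference of units), so a bilinear map is determined by its values on pairs of units; and the preceding proposition exhibits them, namely $[e,e^{-1}+\eta]\mu_{[v,0],[v',0]}=[\eta Q_{v,v'},0]$ for all $v,v'\in V^+$, and symmetrically for $\mutil_{\cdot,\cdot}$. By Theorem~\ref{thm:localJP}, the plus-quadratic-map $Q^W$ of $W$ satisfies $Q^W_{[v,0]}=\mu_{[v,0],[v,0]}\cdot\tfrac12$; using $Q_{v,v}=2Q_v$ and $[a\cdot2,0]=[a,0]\cdot2$ in $W^+$, this gives $[e,e^{-1}+\eta]\,Q^W_{[v,0]}=[\eta Q_v,0]$. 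Hence
\[ h_+\bigl([e,e^{-1}+\eta]\,Q^W_{[v,0]}\bigr)=\eta Q_v=h_-\bigl([e,e^{-1}+\eta]\bigr)\,Q_{h_+([v,0])} \]
for all $v\in V^+$, $\eta\in V^-$, which is the Jordan-pair homomorphism identity for the index $\sigma=+$. The index $\sigma=-$ is handled the same way from the $+$/$-$ symmetric form of the preceding proposition (as in the remark following Lemma~\ref{lem:JMS_identities}); concretely, for invertible $\omega$ one has $[e,e^{-1}+\omega]=[-\omega^{-1},0]$ by Proposition~\ref{prop:PVrepr}, so $Q^W_{[e,e^{-1}+\omega]}$ is the $\mu$-map $\mu_{-\omega^{-1}}$, which by Propositions~\ref{prop:gammaJP} and \ref{prop:JordanMuaction} sends $[x,0]\mapsto[e,e^{-1}+xQ_\omega]$, and one extends bilinearly.

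Since $h_+$ and $h_-$ are $\Z$-linear bijections with $h_\sigma(yQ^W_x)=h_{-\sigma}(y)\,Q_{h_\sigma(x)}$ for $\sigma=\pm$, the pair $h=(h_+,h_-)$ is an isomorphism of Jordan pairs, so $V\cong W$. It is automatically an isomorphism of \emph{local} Jordan pairs: by Theorem~\ref{thm:localJP}, $\Rad W=(\overline{0},\overline{\infty})$, and $h$ carries it onto the set of non-invertible elements of $V$, i.e.\ onto $\Rad V$. I expect the only real friction to be the bookkeeping in the middle step: checking that the bilinear maps coming out of (J4) for $\M(V)$ are precisely the explicit $Q_{v,v'}$-valued ones (uniqueness of the bilinear extension plus the computation already made in the preceding proposition), and reconciling the normalisation $Q^W_x=\mu_{x,x}\cdot\tfrac12$ with the classical identity $Q_{v,v}=2Q_v$.
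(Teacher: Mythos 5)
Your proposal is correct and follows essentially the same route as the paper: the same maps $h_+\colon[x,0]\mapsto x$ and $h_-\colon[e,e^{-1}+y]\mapsto y$, the same computation of the addition and of the $\mu$-map actions, and the same verification of the homomorphism identity. The only (valid) variation is in the non-unit case, where you invoke uniqueness of the bilinear extension in \ref{itm:J4} to identify $Q^W_{[v,0]}$ with $[\,\cdot\,Q_v,0]$ directly, whereas the paper expands $\mu_{[x,0],[x,0]}\cdot\tfrac12$ explicitly as $\mu_{[e+x,0]}\cdot2-\mu_{[2e+x,0]}+\mu_{[e,0]}\cdot2$ and computes; both arguments rest on the same fact that units additively generate $V^{\pm}$.
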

\begin{proof}
    Denote the quadratic maps of the Jordan pair $W$ by $U$.
	By construction, $W^+ = \{[x,0]\mid x\in V^+\}$ and $W^- = \{[e,e^{-1}+y]\mid y\in V^-\}$. We compute the addition on $W$:
	\begin{align*}
		[x,0]+[x',0] &= [0,0]\alpha_{v}\alpha_{v'} = [v+v',0] \quad \text{and} \\
		[e,e^{-1}+y]\plustil[e,e^{-1}+y'] &= [e,e^{-1}]\gamma_{[e,e^{-1}+y]\tau}\gamma_{[e,e^{-1}+y']\tau} = [e,e^{-1}]\gamma_{[yQ_e,0]}\gamma_{[y'Q_e,0]} \\
			&= [e,e^{-1}]\zeta_{y}\zeta_{y'} = [e,e^{-1}+y+y']\;,
	\end{align*}
	where we used Definition~\ref{def:alphazetaJP} and Proposition~\ref{prop:gammaJP}. A second ingredient we will need, is the actions of the $\mu$-maps. By Proposition~\ref{prop:gammaJP} we have $\mu_{[x,0]} = \mu_{x}$ for all invertible $x\in V^+$. By Proposition~\ref{prop:JordanMuaction} this means $[e,e^{-1}+y]\mu_{[x,0]} = [yQ_x,0]$. Similarly, for all invertible $y\in V^-$, $\mu_{[e,e^{-1}+y]} = \mu_{[-y^{-1},0]} = \mu_{-y^{-1}}$, so
	\[[x,0]\mu_{[e,e^{-1}+y]} = [e,e^{-1}+xQ_{-y^{-1}}^{-1}] = [e,e^{-1}+xQ_y]\;.\]
	We are now ready to define an isomorphism between $V$ and $W$:
	\[h_+\colon W^+\to V^+\colon [x,0]\mapsto x\qquad h_-\colon W^-\to V^-\colon [e,e^{-1}+y]\mapsto y\;.\]
	What remains to be proven is the linearity of these maps, and the fact that they preserve the quadratic maps of the Jordan pairs. Linearity is immediate from our computation of the addition on $W$. Next, take any $[x,0]\in W^+$ and $[e,e^{-1}+y]\in W^-$. If $[x,0]$ is a unit, we have
	\begin{align*}
		h_+([e,e^{-1}+y]U^+_{[x,0]}) &= h_+([e,e^{-1}+y]\mu_{[x,0]}) = yQ_x = h_-([e,e^{-1}+y])Q^+_{h_+([x,0])}\;.
	\end{align*}
	If $[x,0]$ is not a unit, we get
	\begin{align*}
		h_+([e,e^{-1}+y]U^+_{[x,0]}) &= h_+([e,e^{-1}+y]\mu_{[x,0],[x,0]}\cdot\tfrac{1}{2}) \\
			&= h_+([e,e^{-1}+y](\mu_{[e+x,0]}\cdot2-\mu_{[2e+x,0]}+\mu_{[e,0]}\cdot2)) \\
			&= h_+([e,e^{-1}+y](\mu_{e+x}\cdot2-\mu_{2e+x}+\mu_{e}\cdot2)) \\
			&= h_+([yQ^+_{e+x}\cdot2-yQ^+_{2e+x}+yQ^+_{e}\cdot2,0]) \\
			&= y(Q^+_{e,x}+Q^+_e+Q^+_x)\cdot2-y(Q^+_{e,x}\cdot 2+Q^+_x+Q^+_{e}\cdot 4)+yQ^+_{e}\cdot2 \\
			&= yQ^+_x = h_-([e,e^{-1}+y])Q^+_{h_+([x,0])}\;.
	\end{align*}
	Similarly, we get
	\begin{align*}
		h_-([x,0]U^-_{[e,e^{-1}+y]}) &= h_-([x,0]\mu_{[e,e^{-1}+y]}) = xQ^-_y = h_+([x,0])Q^-_{h_-([e,e^{-1}+y])}
	\end{align*}
	for $[e,e^{-1}+y]$ a unit, and otherwise
	\begin{align*}
		h_-([x,0]U^-_{[e,e^{-1}+y]}) &= h_-([x,0]\mutil_{[e,e^{-1}+y],[e,e^{-1}+y]}\cdottil\tfrac{1}{2}) \\
			&= h_-([x,0](\mu_{[e,e^{-1}+e^{-1}+y]}\cdottil2\mintil\mu_{[e,e^{-1}+2e^{-1}+y]}\plustil\mu_{[e,e^{-1}+e^{-1}]}\cdottil2)) \\
			&= h_-([xQ^-_{e^{-1}+y}\cdot 2 - xQ^-_{2e^{-1}+y} + xQ^-_{e^{-1}}\cdot 2]) \\
			&= xQ^-_{e^{-1}+y}\cdot 2 - xQ^-_{2e^{-1}+y} + xQ^-_{e^{-1}}\cdot 2 \\
			&= xQ^-_y = h_+([x,0])Q^-_{h_-([e,e^{-1}+y])}\;.
	\end{align*}
	Hence $(h_+,h_-)$ is a homomorphism from $W$ to $V$, and since it is a bijection, it is also an isomorphism.
\end{proof}
\begin{corollary}
	If $V$ and $W$ are local Jordan pairs and $\M(V)\cong \M(W)$, then $V\cong W$.
\end{corollary}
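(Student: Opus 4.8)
The plan is to apply the preceding theorem to both $V$ and $W$, thereby reducing the corollary to the assertion that \emph{Construction~\ref{constr:Jpair} carries isomorphic local Moufang sets to isomorphic Jordan pairs}. First note that both $\M(V)$ and $\M(W)$ satisfy \hyperref[itm:J1]{\normalfont{(J1-4)}}: properties \hyperref[itm:J1]{\normalfont{(J1-2)}} and \ref{itm:J4} hold for every local Jordan pair (as seen in the proof of the proposition establishing that $\M(V)$ satisfies \hyperref[itm:J1]{\normalfont{(J1-4)}}), while \ref{itm:J3} for $\M(V)$ is equivalent to $V^{+}$ being uniquely $2$- and $3$-divisible and, being a property of the Moufang set $\M(V)$ alone, passes across the isomorphism $\M(V)\cong\M(W)$; this is the divisibility hypothesis already present in the preceding theorem, which we keep in force. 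Granting these, Theorem~\ref{thm:localJP} applies to $\M(V)$ and to $\M(W)$, and the preceding theorem identifies the two output Jordan pairs with $V$ and with $W$. Hence it suffices to show: if $\M\cong\M'$ are local Moufang sets satisfying \hyperref[itm:J1]{\normalfont{(J1-4)}}, then the Jordan pairs $\mathrm{JP}(\M)$ and $\mathrm{JP}(\M')$ produced by Construction~\ref{constr:Jpair} are isomorphic.

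The first step is to see that $\mathrm{JP}(\M)$ depends only on $\M$ together with the two distinguished points $0,\infty$, and not on the auxiliary choices of a $\mu$-map $\tau$ or of the fixed unit $e$. The unit $e$ never enters the final data $Q_x=\mu_{x,x}\cdot\tfrac12$, $Q_y=\mutil_{y,y}\cdottil\tfrac12$; and $\tau$ drops out because $\mu_x$ is $\tau$-independent (Lemma~\ref{prop:mu}\ref{itm:muindep}), the elements $\til x$ are $\tau$-independent (Lemma~\ref{prop:mu}\ref{itm:tiltau}), and in the group law $y\plustil w=\infty\gamma_{y\tau}\gamma_{w\tau}$ the element $\gamma_{y\tau}$ is simply the unique element of $U_{0}$ sending $\infty$ to $y$ (the remark after Construction~\ref{constr:Jpair}). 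The bilinear maps $\mu_{\cdot,\cdot}$ and $\mutil_{\cdot,\cdot}$ are then intrinsic as well, being the unique bilinear extensions of $x,z\mapsto\mu_{x+z}-\mu_x-\mu_z$ defined on units (Remark~\ref{rem:expression_muxz} gives explicit formulas). The second step is that $\mathrm{JP}(\M)$ is, up to isomorphism, independent of the choice of $0,\infty$: given two admissible pairs $(0,\infty)$ and $(0',\infty')$, Proposition~\ref{pr:UxUy} yields $g\in G$ with $0g=0'$ and $\infty g=\infty'$, and $x\mapsto xg$ together with $u\mapsto u^{g}$ is an isomorphism of $\M$ onto itself — it respects $\nsim$ and intertwines root groups by \ref{itm:LM3} — carrying the construction for $(0,\infty)$ onto the construction for $(0',\infty')$.

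Granting these two facts, given an isomorphism $\psi=(\phi,\{\theta_x\})\colon\M\to\M'$, run Construction~\ref{constr:Jpair} on $\M'$ using the distinguished points $\phi(0),\phi(\infty)$, which is legitimate because $\phi$ preserves $\nsim$. Then $\phi$ restricts to bijections $X\setminus\overline{\infty}\to X'\setminus\overline{\phi(\infty)}$ and $X\setminus\overline{0}\to X'\setminus\overline{\phi(0)}$; since $\phi$ intertwines the root group actions (via the $\theta_x$) and carries each $\mu_x$ to $\mu_{\phi(x)}$, it carries the group laws $+$, $\plustil$, the $\mu$-maps, and hence the bilinear maps $\mu_{\cdot,\cdot}$, $\mutil_{\cdot,\cdot}$, onto their $\M'$-counterparts, so $\phi$ is an isomorphism $\mathrm{JP}(\M)\to\mathrm{JP}(\M')$. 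Composing with the base-point change of the previous paragraph gives $\mathrm{JP}(\M)\cong\mathrm{JP}(\M')$ with the originally chosen points of $\M'$. Putting everything together, $V\cong\mathrm{JP}(\M(V))\cong\mathrm{JP}(\M(W))\cong W$.

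The only real work is the second paragraph: one must verify, ingredient by ingredient, that each piece of Construction~\ref{constr:Jpair} is genuinely intrinsic to the Moufang-set data rather than to the presentation, and that conjugation by $g\in G$ is an isomorphism in the precise sense of the definition of isomorphism of local Moufang sets. This is routine but demands care, most of all for the bilinear maps $\mu_{\cdot,\cdot}$ and $\mutil_{\cdot,\cdot}$, where one uses the uniqueness of the extension postulated in \ref{itm:J4} to conclude that any isomorphism of the underlying data must respect them. Everything else is a verbatim appeal to the preceding theorem.
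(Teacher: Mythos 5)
Your proposal is correct and follows the same route the paper intends: the corollary is stated without proof as an immediate consequence of the preceding theorem, i.e.\ $V\cong\mathrm{JP}(\M(V))\cong\mathrm{JP}(\M(W))\cong W$, with the middle isomorphism resting on the (implicit) fact that Construction~\ref{constr:Jpair} sends isomorphic local Moufang sets to isomorphic Jordan pairs. Your second and third paragraphs merely make explicit this invariance (independence of $\tau$, $e$ and of the base points $0,\infty$ via Proposition~\ref{pr:UxUy} and \ref{itm:LM3}, plus transport along the isomorphism), and your handling of the inherited $2$- and $3$-divisibility hypothesis matches what the paper tacitly assumes.
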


\medskip

Conversely, suppose now that we start with a local Moufang set $\M$ to which we apply Theorem~\ref{thm:localJP} to get a local Jordan pair $V$, and consider the local Moufang set $\M(V)$ obtained from $V$ by Theorem~\ref{thm:MV}; it is now natural to ask whether $\M \cong \M(V)$.
We will be able to give a positive answer to this question provided that we impose an additional assumption determining the action of $U$ on $\overline{\infty}$.

\begin{theorem}\label{th:extra}
	Let $\M$ be a local Moufang set satisfying \hyperref[itm:J1]{\normalfont{(J1-4)}}, and let $V$ be the local Jordan pair obtained from $\M$ by applying Theorem~\ref{thm:localJP}. Assume that
    \begin{equation}\label{eq:extra}
	    t\alpha_x \mintil x\mutil_{t,t\alpha_x} \plustil t\alpha_x\mu_{x,x}\mutil_{t,t}\cdottil\tfrac{1}{4}
            = t \mintil x\mutil_{t,t}\cdottil\tfrac{1}{2}\qquad\text{for all $t\sim\infty$ and $x\nsim \infty$\;.} \tag{$*$}
    \end{equation}
	Then $\M\cong\M(V)$.
\end{theorem}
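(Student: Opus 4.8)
The plan is to realize both $\M$ and $\M(V)$ via Construction~\ref{constr:MUtau} and then invoke Remark~\ref{remark:constructioniso}. First I would note that the Jordan pair $V$ produced by Construction~\ref{constr:Jpair} --- the modules $V^\pm$ with their addition, the $\mu$-maps, and the bilinear maps $\mu_{\cdot,\cdot},\mutil_{\cdot,\cdot}$ --- does not depend on the choice of the $\mu$-map $\tau$, and neither does~\eqref{eq:extra}; hence we may assume $\tau=\mu_e$, where $e$ is the fixed unit used in Construction~\ref{constr:Jpair}. With this choice $\M$ is itself of the form $\M(U_\infty,\tau)$, while by construction $\M(V)=\M(U',\tau')$ with $U'=\{\alpha_v\mid v\in V^+\}$ and $\tau'=\mu_e^{(V)}$. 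Define $\phi\colon X\to\P(V)$ by $\phi(x)=[x,0]$ if $x\nsim\infty$ and $\phi(t)=[e,e^{-1}+t]$ if $t\sim\infty$; this is meaningful because $\overline{\infty}=\Rad V^-$ by Theorem~\ref{thm:localJP}, and it is a bijection by the normal form~\eqref{eq:P(V)} and the uniqueness in Proposition~\ref{prop:PVrepr}. Define $\theta\colon U_\infty\to U'$ by $\theta(\alpha_v)=\alpha_v^{(V)}$; since addition on $V^+$ is $v+w=0\alpha_v\alpha_w$ by definition, $\theta$ is a group isomorphism. It then suffices to verify the three conditions of Remark~\ref{remark:constructioniso}.

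For the equivalence relation I would use $x\sim y\iff 0\alpha_x\alpha_y^{-1}\sim0$ (cf.\ the proof of Lemma~\ref{lem:timesn_sim}) and its $\plustil$-analogue, together with $\overline{0}=\Rad V^+$ and $\overline{\infty}=\Rad V^-$: for $x,y\nsim\infty$ this reads $x\sim y\iff x-y\in\Rad V^+\iff[x,0]\sim[y,0]$, dually for $x,y\sim\infty$, and if exactly one of $x,y$ lies in $\overline{\infty}$ then $\phi(x)\nsim\phi(y)$ by the third clause of Definition~\ref{def:radequiv}. For the action it is enough to check $\phi(x\alpha_v)=\phi(x)\alpha_v^{(V)}$ for all $x\in X$ and $v\in V^+$. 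When $x\nsim\infty$, both sides equal $[x+v,0]$ by Definition~\ref{def:alphazetaJP}. When $t\sim\infty$, we have $t\alpha_v\in\Rad V^-$, so $\phi(t)\alpha_v^{(V)}=[e,e^{-1}+t^v]$ where $t^v$ is the quasi-inverse of $(t,v)$ in the Jordan pair $(V^-,V^+)$ (which exists since $t\in\Rad V^-$), while $\phi(t\alpha_v)=[e,e^{-1}+t\alpha_v]$; so this case reduces to the single identity $t\alpha_v=t^v$.

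This last identity is exactly the content of~\eqref{eq:extra}: writing out $t^v=(t-vQ^-_t)B_{t,v}^{-1}$ with the Bergman operator $B_{t,v}=\id-D_{t,v}+Q^+_vQ^-_t$ of $(V^-,V^+)$, the equality $t\alpha_v=t^v$ is equivalent to $(t\alpha_v)B_{t,v}=t-vQ^-_t$, i.e.\ to $t\alpha_v\mintil vQ^-_{t,t\alpha_v}\plustil(t\alpha_v)Q^+_vQ^-_t = t\mintil vQ^-_t$. Feeding in $Q^+_x=\mu_{x,x}\cdot\tfrac{1}{2}$ and $Q^-_y=\mutil_{y,y}\cdottil\tfrac{1}{2}$ from Theorem~\ref{thm:localJP}, together with the resulting polarizations $\mutil_{t,t\alpha_v}=Q^-_{t,t\alpha_v}$ and $\mu_{v,v}\mutil_{t,t}\cdottil\tfrac{1}{4}=Q^+_vQ^-_t$, this becomes precisely~\eqref{eq:extra} with $x$ renamed $v$. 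Finally, for the $\tau$-compatibility I would check $\phi(x\mu_e)=\phi(x)\mu_e^{(V)}$ using Proposition~\ref{prop:JordanMuaction} and the fact that, as maps, $Q^+_e$ and $(Q^+_e)^{-1}$ are the restrictions of the involution $\mu_e$ to $V^-$ and to $V^+$ respectively (from $Q^+_e=\mu_{e,e}\cdot\tfrac{1}{2}$, Lemma~\ref{lem:JMS_identities}\ref{itm:xyx}, the $+/-$-swapped version of the same, and $\mu_e^2=\id$): for a unit $x$ this gives $[x,0]\mu_e^{(V)}=[-x^{-1}Q_e,0]=[x\mu_e,0]=\phi(x\mu_e)$, using $x^{-1}=x\mu_e=-x$ (Lemma~\ref{lem:specialmu}\ref{itm:special-iii}); for $x\sim0$ both sides are $[e,e^{-1}+x\mu_e]$; for $x\sim\infty$ both are $[x\mu_e,0]$. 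Remark~\ref{remark:constructioniso} then yields $\M\cong\M(V)$. The one genuinely delicate step is the middle one: correctly matching the opposite-pair Bergman operator against the left-hand side of~\eqref{eq:extra}, i.e.\ recognizing that~\eqref{eq:extra} says neither more nor less than $t\alpha_v=t^v$; everything else is routine case-checking.
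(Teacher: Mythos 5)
Your proposal is correct and follows essentially the same route as the paper: the same bijection $\phi$ and isomorphism $\theta$, the same case analysis for the equivalence relation and for the actions of $U_\infty$ and $\tau=\mu_e$, and the same key step of unwinding~\eqref{eq:extra} into $(t\alpha_x)B_{t,x}=t\mintil xQ_t$ and hence $t\alpha_x=t^x$ via invertibility of the Bergman operator for $t\in\Rad V^-$. The only cosmetic difference is that you phrase the conclusion through Remark~\ref{remark:constructioniso} explicitly, whereas the paper verifies the isomorphism conditions directly.
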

\begin{proof}
	To avoid confusion, we will denote the set with equivalence of the local Moufang set $\M$ by $(X,{\sim})$, and the corresponding root group $U_\infty$ by $U$. Recall that $\M(V)$ acts on the set $\P(V)$; we will denote the root group $U_{[0,0]}$ by $U'$. To prove that $\M \cong \M(V)$, we need an equivalence-preserving bijection $\phi\colon X\to\P(V)$, an isomorphism $\theta\colon U\to U'$ and a $\mu$-map in each local Moufang set, which we will denote by $\tau$ and $\tau'$, respectively, such that the action of $U$ and $\tau$ on $X$ are permutationally equivalent with the action of $U'$ and $\tau'$ on $\P(V)$.

	Let $e$ be a unit in $X$; then by \eqref{eq:P(V)} and Theorem~\ref{thm:localJP} we can describe $\P(V)$ as
    \[ \P(V) = \{[t,0]\mid t\nsim\infty\}\cup\{[e,e^{-1}+t]\mid t\sim\infty\}\;. \]
    We define
	\[ \phi \colon X\to \P(V)  \colon  t\mapsto
        \begin{cases}
            [t,0] &\text{if $t\in X\setminus\overline{\infty}$,} \\
            [e,e^{-1}+t] & \text{if $t\in \overline{\infty}$.}
        \end{cases} \]
        We check that this bijection preserves the equivalence, using Definition~\ref{def:radequiv}. First, if $x,x'\nsim\infty$, we have
        \[x\sim x'\iff x-x'\sim 0\iff x-x'\in\Rad V^+\iff [x,0]\sim[x',0] \iff \phi(x)\sim\phi(x')\;.\]
        Second, if $y\sim y'\sim\infty$, then $y,y'\in\Rad V^-$, so $\phi(y)=[e,e^{-1}+y]\sim[e,e^{-1}+y']=\phi(y')$. Finally, if $x\nsim\infty$ and $y\sim\infty$ (or vice-versa) then $[x,0]\nsim[e,e^{-1}+y]$.

        Next, we set $\tau = \mu_e$ and $\tau' = \mu_{[e,0]}$. Then
        \begin{align*}
        	\phi(t\tau) = \phi(t\mu_e) &= \begin{cases}
			[t\mu_e,0] &\text{if $t\nsim 0$}, \\
			[e,e^{-1}+t\mu_e] & \text{if $t\sim 0$};
			\end{cases}\\
        	\phi(t)\tau' &= \begin{cases}
            [t,0]\mu_{[e,0]} = [e,e^{-1}+tQ_e^{-1}] &\text{if $t\sim 0$}, \\
            [t,0]\mu_{[e,0]} = [-t^{-1}Q_e,0] & \text{if $t$ is a unit}, \\
            [e,e^{-1}+t]\mu_{[e,0]} = [tQ_e,0] & \text{if $t\sim\infty$}.
        \end{cases}
        \end{align*}
	Observe that $Q_e = \mu_{e,e}\cdot\frac{1}{2} = \mu_e$, so $Q_e^{-1} = \mu_e^{-1} = \mu_e$, and that $t^{-1} = t\mu_t = -t$ if $t$ is a unit. Hence, in all cases, $\phi(t\tau) = \phi(t)\tau'$.

	We now define
	\[\theta \colon U\to U' \colon \alpha_x\mapsto\alpha_{[x,0]}\qquad\text{for all $x\nsim\infty$.}\]
	This is clearly a group isomorphism. It only remains to verify that $\phi(t\alpha_x) = \phi(x)\theta(\alpha_x)$. If $t\nsim\infty$, we have
	\[\phi(t\alpha_{x}) = \phi(t+x) = [t+x,0] = [t,0]\alpha_{[x,0]} = \phi(t)\theta(\alpha_{x})\;,\]
	so the only case left to consider is when $t\sim\infty$. By~\eqref{eq:extra}, we have
	\begin{alignat*}{2}
		&&t\alpha_x \mintil x\mutil_{t,t\alpha_x} \plustil t\alpha_x\mu_{x,x}\mutil_{t,t}\cdottil\tfrac{1}{4} &= t \mintil x\mutil_{t,t}\cdottil\tfrac{1}{2} \\
		&\implies& t\alpha_x \mintil t\alpha_xD_{t,x} \plustil t\alpha_xQ_xQ_t &= t \mintil xQ_t\\
		&\implies& t\alpha_x(\id \mintil D_{t,x} \plustil Q_xQ_t) &= t \mintil xQ_t\;.
	\end{alignat*}
	Now observe that $(t,x)$ is quasi-invertible because $V$ is a local Jordan pair and $t\in \Rad V^-$; hence $\id \mintil D_{t,x} \plustil Q_xQ_t$ is invertible and
	\begin{align*}
		t\alpha_x &= t \mintil xQ_t(\id \mintil D_{t,x} \plustil Q_xQ_t)^{-1} = t^x\;.
	\end{align*}
	We conclude that also in this case,
	\[\phi(t\alpha_{x}) = \phi(t^{x}) = [t^{x},0] = [t,0]\alpha_{[x,0]} = \phi(t)\theta(\alpha_x)\;.\]
	Hence we have shown that $\M$ and $\M(V)$ are isomorphic.
\end{proof}

\begin{remark}
	As can be observed in the proof, the extra condition~\eqref{eq:extra} is a translation of the original definition of $\alpha_x$ in $\M(V)$: $[e,e^{-1}+t]\alpha_x = [e,e^{-1}+t^x]$. It is at this point unclear to us whether this assumption is strictly necessary.
    It seems likely that there is a connection with the extra assumption needed in \cite[Theorem~5.20]{DMRijckenPSL}, but we have not been able to verify this.
\end{remark}

\bibliographystyle{alpha}
\bibliography{universal}

\end{document}